\numberwithin{equation}{section}
\newtheorem{Theorem}{Theorem}[section]
\newtheorem{Corollary}[Theorem]{Corollary}
\newtheorem{Lemma}[Theorem]{Lemma}
 { \theoremstyle{definition}
\newtheorem{Definition}[Theorem]{Definition}
\newtheorem{Example}[Theorem]{Example}
\newtheorem{Remark}[Theorem]{Remark} }
\newcommand{\ot}{\otimes}
\newcommand{\co}{\circ}
\DeclareMathOperator{\barast}{\,\overline{\ast}\,}
\DeclareMathOperator{\Hom}{Hom}
\begin{document}
\allowdisplaybreaks

\newcommand{\arXivNumber}{2402.16704}

\renewcommand{\PaperNumber}{024}

\FirstPageHeading

\ShortArticleName{Twisted Post-Hopf Algebras, Twisted Relative Rota--Baxter Operators and Hopf Trusses}

\ArticleName{Twisted Post-Hopf Algebras, Twisted Relative\\ Rota--Baxter Operators and Hopf Trusses}

\Author{Jos\'e Manuel FERN\'ANDEZ VILABOA~$^{\rm ab}$, Ram\'on GONZ\'ALEZ RODR\'IGUEZ~$^{\rm ac}$\newline and Brais RAMOS P\'EREZ~$^{\rm ab}$}

\AuthorNameForHeading{J.M.~Fern\'andez Vilaboa, R.~Gonz\'alez Rodr\'{\i}guez and B. Ramos P\'erez}

\Address{$^{\rm a)}$~CITMAga, 15782 Santiago de Compostela, Spain}

\Address{$^{\rm b)}$~Departamento de Matem\'aticas, Facultade de Matem\'aticas,\\
\hphantom{$^{\rm b)}$}~Universidade de Santiago de Compostela, 15771 Santiago de Compostela, Spain}
\EmailD{\href{mailto:josemanuel.fernandez@usc.es}{josemanuel.fernandez@usc.es}, \href{mailto:braisramos.perez@usc.es}{braisramos.perez@usc.es}}
\URLaddressD{\url{https://sites.google.com/view/braisramos/home}}

\Address{$^{\rm c)}$~Departamento de Matem\'{a}tica Aplicada II, Universidade de Vigo,\\
\hphantom{$^{\rm c)}$}~E.E. Telecomunicaci\'on, 36310 Vigo, Spain}
\EmailD{\href{mailto:rgon@dma.uvigo.es}{rgon@dma.uvigo.es}}
\URLaddressD{\url{https://dma.uvigo.es/~rgon/}}

\ArticleDates{Received November 19, 2024, in final form April 06, 2025; Published online April 15, 2025}

\Abstract{The present article is devoted to studying the categorical relationships between the categories of Hopf trusses, weak twisted post-Hopf algebras, introduced by Wang (2023), and weak twisted relative Rota--Baxter operators. The latter objects are a generalisation of the relative Rota--Baxter operators defined by Li--Sheng--Tang (2024), where the Rota--Baxter condition is modified through a cocycle. Under certain conditions, this work shows that the three aforementioned categories are equivalent.}

\Keywords{braided monoidal category; Hopf algebra; Hopf truss; weak twisted post-Hopf algebra; weak twisted relative Rota--Baxter operator}

\Classification{18M05; 16T05; 17B38}

\section{Introduction}

An important issue in the field of mathematical physics consists in the study of the solutions of the quantum Yang--Baxter equation. Drinfeld has successfully tackled the challenge of constructing its solutions and proposed in \cite{DR1} to focus on the study of the set-theoretical ones. In~order to study this kind of solutions, Rump introduced in \cite{Rump} the notion of brace, which was subsequently generalized for the non-abelian setting by Guarnieri and Vendramin in \cite{GV}, who introduced the concept of skew brace.

A skew brace is a pair of groups, $(G,.)$ and $(G,\star)$, satisfying the following compatibility condition:
\begin{equation}\label{compatskbrace}
g\star(h. t)=(g\star h). g^{-1}. (g\star t)
\end{equation}
for all $g,h,t\in G$ and where $g^{-1}$ denotes the inverse of $g$ for the group structure $(G,.)$. The~importance of these objects lies in the fact that they induce non-degenerate and non necessarily involutive solutions of the quantum Yang--Baxter equation. Through a linearisation process, Angiono, Galindo and Vendramin in \cite{AGV} obtained Hopf braces, which are also relevant from a~physical point of view because the subclass of the cocommutative ones also gives rise to solutions of the above mentioned equation. Since the emergence of Hopf braces, many structures were born as a generalization of these, being Hopf trusses the most notable and the ones we are going to be interested in throughout this paper. Hopf trusses were defined by Brzezi\'nski in \cite{BRZ1} as the quantum version of skew trusses, which consist of a group structure $(A,\diamond)$ together with an associative operation $\circ\colon A\times A\rightarrow A$ and a function $\sigma\colon A\rightarrow A$ such that the relation
\begin{equation}\label{compatsktruss}
a\circ(b\diamond c)=(a\circ b)\diamond \sigma(a)^{\diamond}\diamond(a \circ c)
\end{equation}
holds for all $a,b,c\in A$ and where $\sigma(a)^{\diamond}$ denotes the inverse of $\sigma(a)$ in the group $(A,\diamond)$. It is easy to see that \eqref{compatsktruss} becomes \eqref{compatskbrace} when $\sigma$ is the identity.

Due to the fact that cocommutative Hopf braces give rise to solutions of the quantum Yang--Baxter equation, some objects appeared with the aim of characterising these structures. Examples of these are brace triples and post-Hopf algebras, whose respective categories are isomorphic to the category of cocommutative Hopf braces as can be consulted in \cite[Theorems~2.18 and~3.16]{FGR} (see also \cite{LST}). Thus, a natural question that arises at this point is what are the analogues of post-Hopf algebras and brace triples for Hopf trusses. In order to answer it, Wang defined in \cite{Wang} the notion of weak twisted post-Hopf algebra in the category of vector spaces over a field $\mathbb{F}$, and proved that the category of these objects is isomorphic to the category of cocommutative Hopf trusses. The main difference between weak twisted post-Hopf algebras and classical post-Hopf algebras is that the structure of the new ones is modified using an endomorphism $\Phi_{H}\colon H\rightarrow H$, where $H$ denotes the underlying Hopf algebra, called the cocycle. Therefore, in Section \ref{s2} of this article, we first give the notion of (weak) twisted post-Hopf algebra for an arbitrary braided monoidal category {\sf C}. After that, we prove that the categories of Hopf trusses and weak twisted post-Hopf algebras are isomorphic requiring a weaker hypothesis than cocommutativity which is related with the so-called cocommutativity class introduced in \cite{CCH} (see Theorem \ref{th-iso-wtph-htr}). To~conclude, we show that, if $\Phi_{H}$ preserves the unit, $\Phi_{H}$ is an idempotent morphism. Then if the base category {\sf C} admits split idempotents, under suitable conditions, we have a new Hopf algebra structure over $I(\Phi_{H})$, which is the image of $\Phi_{H}$ (see Theorem \ref{bialgIphi} and Corollary \ref{halgIphi}).

On the other hand, this paper also explores structures associated with Rota--Baxter operators. These objects were born in \cite{BAXRB} in the setting of differential operators on commutative Banach algebras and intensively studied from a probabilistic and combinatorial point of view after \cite{RORB}. Then, this notion was extended for cocommutative Hopf algebras by Goncharov in \cite{Goncharov}. In this work, we are going to be interested in the so-called relative Rota--Baxter operators, which are the most recent generalization of Rota--Baxter operators introduced by Li et al. in \cite{LST}. These objects are no more than coalgebra morphisms $T\colon H\rightarrow B$ between two Hopf algebras in the category of vector spaces over a field $\mathbb{F}$, such that $H$ with the action $\rightharpoonup$ is a left $B$-module bialgebra and the following equality holds:
\begin{equation}\label{compatrRB-intro}
T(a)T(b)=T\bigl(a_{(1)}\bigl(T(a_{(2)})\rightharpoonup b\bigr)\bigr).
\end{equation}
Goncharov's operators are a particular case of the previous ones taking $H=B$ and the adjoint action. The relevance of relative Rota--Baxter operators lies in the fact that there exists a~correspondence between them and Hopf braces which induces an adjunction between the functors involved (see \cite[Theorem 3.3]{LST}). So, the aim of Section \ref{s3} is going to be trying to generalize previous correspondence for Hopf trusses. A first approach to the previously stated problem was given by Li and Wang in \cite{LW2} (see also \cite{LW}). They introduced the notion of Rota--Baxter systems which are triples $(H,B_{1},B_{2})$, where $H$ is a Hopf algebra in the category of vector spaces and $B_{k}\colon H\rightarrow H$, $k=1,2$, are coalgebra morphisms verifying that
\begin{align*}
B_{k}(a)B_{k}(b)=B_{k}\bigl(B_{1}(a_{(1)})bS\bigl(B_{2}(a_{(2)})\bigr)\bigr)
\end{align*}
for $k=1,2$ and for all $a,b\in H$, where $S$ denotes the antipode for the Hopf algebra $H$. Take into account that every Rota--Baxter operator $T\colon H\rightarrow H$ in Goncharov's sense gives an example of Rota--Baxter system taking $B_{1}(a)=a_{(1)}T(a_{(2)})$ for all $a\in H$ and $B_{2}=T$. With these particular structures, Li and Wang are capable of constructing Hopf trusses from Rota--Baxter systems (see \cite[Proposition 3.8]{LW2}), but not the other way round. Hence, in order to achieve the desired correspondence with the Hopf trusses category, what we have done is to introduce the notion of (weak) twisted relative Rota--Baxter operator. These objects differ from the usual ones in the fact that \eqref{compatrRB-intro} is amended using a cocycle $\Psi_{H}\colon H\rightarrow H$. Therefore, Section~\ref{s3} is organized as follows. After defining the category of weak twisted relative Rota--Baxter operators, we first construct a functor from the subcategory of these objects satisfying condition \eqref{classcocomfrakmH} to the category of Hopf trusses (see Theorem \ref{funOmega}). Then, we prove that every Hopf truss verifying~\eqref{classcocomGH} induces a weak twisted relative Rota--Baxter (see Theorem \ref{funLambda}) and also show that these two functors give rise to an adjoint pair (see Theorem \ref{adjpair}) which supposes the sought-after generalization to this context of \cite[Theorem 3.3]{LST}. Note that conditions \eqref{classcocomGH} and \eqref{classcocomfrakmH} are again related with the cocommutativity class of a Hopf algebra introduced in \cite{CCH}. To conclude, we show that if we consider the subcategory of weak twisted relative Rota--Baxter operators satisfying \eqref{classcocomfrakmH} whose objects are isomorphisms too, then the previous adjoint pair induce a~categorical equivalence between the respective subcategories (see Theorem \ref{catequivHTr}).

\section{Preliminaries}
A monoidal category {\sf C} is a category endowed with a tensor functor $\otimes\colon {\sf C}\times{\sf C}\rightarrow {\sf C}$, a unit object~$K$ and families of natural isomorphisms in {\sf C},
\begin{gather*}
a_{M,N,P}\colon \ (M\ot N)\ot P\rightarrow M\ot (N\ot P),\qquad r_{M}\colon \ M\ot K\rightarrow M, \qquad l_{M}\colon \ K\ot M\rightarrow M,
\end{gather*}
called associativity, right and left unit constraints, respectively, verifying the pentagon axiom~\eqref{PA} and the triangle axiom~\eqref{TA}, which refer to the identities
\begin{gather}\label{PA}\tag{PA}
a_{M,N, P\ot Q}\co a_{M\ot N,P,Q}= ({\rm id}_{M}\ot a_{N,P,Q})\co a_{M,N\ot P,Q}\co (a_{M,N,P}\ot {\rm id}_{Q}),\\
\label{TA}\tag{TA}
({\rm id}_{M}\ot l_{N})\co a_{M,K,N}=r_{M}\ot {\rm id}_{N},
\end{gather} where ${\rm id}_{X}$ denotes the identity morphism for all object $X\in {\sf C}$. It is an important result that every monoidal category is monoidal equivalent to a strict one (see \cite[Proposition XI.5.1]{K}), which is the name given to a monoidal category where the constraints mentioned before are identities. Therefore, every monoidal category can be assumed to be strict without loss of generality and also every result proved in a strict setting hold in the general framework.

Given a monoidal category {\sf C}, it is said that {\sf C} is braided if there exists a braiding, which is a family of natural isomorphisms in {\sf C}
\[
c_{M,N}\colon \ M\ot N\rightarrow N\ot M
\]
subject to conditions
\[
c_{M,N\ot P}= (N\ot c_{M,P})\co (c_{M,N}\ot P),\qquad
c_{M\ot N, P}= (c_{M,P}\ot N)\co (M\ot c_{N,P}).
\] A. Joyal and R. Street in \cite{JS2} introduced these particular kind of categories as a tool for studying links and braids in topology. When the braiding $c$ also satisfies that $c_{Y,X}\circ c_{X,Y}={\rm id}_{X\otimes Y}$ for each~$X,Y\in {\sf C}$, $c$ is called a symmetry for {\sf C} and, in this situation, {\sf C} is said to be a symmetric monoidal category. Examples of this kind of categories are, between others, the category of vector spaces over a field $\mathbb{F}$, ${}_{\mathbb{F}}{\sf Vect}$, and also the category of modules over a commutative ring~$R$,~${}_{R}{\sf Mod}$, where the tensor functor is defined by the usual tensor product of modules and the braiding is the flip. For further information the reader is referred to \cite{Mac}.

Thus, throughout this paper, we will denote by ${\sf C}=({\sf C},\otimes, K,c)$ a strict braided monoidal category. Thanks to being working with a strict category, the identities $c_{X,K}={\rm id}_{X}=c_{K,X}$ hold for all $X\in {\sf C}$. Moreover, if $f\colon X\rightarrow Y$ is a morphism in {\sf C} and $Z$ an object, $Z\otimes f$ and $f\otimes Z$ will be used to write ${\rm id}_{Z}\otimes f$ and $f\otimes {\rm id}_{Z}$, respectively. In some instances, we will employ the standard graphical calculus in braided monoidal categories. Recall that, in graphical notation, the composition of morphisms is represented from the top to the bottom, the identity is described as a vertical line, and the tensor product of morphisms is expressed through horizontal concatenation. If $f\colon A\otimes B\rightarrow C$ is a morphism in the category {\sf C}, then it will be represented by
\[
\begin{tikzpicture}
		\draw (8,0.5) circle (0.25cm);
		\node at (8,0.5) {\footnotesize{$f$}};
		\draw (8.175,0.675) .. controls (8.4,0.8) .. (8.6,1);
		\draw (7.825,0.675) .. controls (7.6,0.8) .. (7.4,1);
		\draw (8,0.25) -- (8,0);
\end{tikzpicture}
\]
and, if $g\colon A\rightarrow B\otimes C$ is a morphism in {\sf C}, then it will represented by
\[
	\begin{tikzpicture}
		\draw (8,-0.5) circle (0.25cm);
		\node at (8,-0.5) {\footnotesize{$g$}};
		\draw (8,-0.25) -- (8,0);
		
		\draw (8.175,-0.675) .. controls (8.4,-0.8) .. (8.6,-1);
		\draw (7.825,-0.675) .. controls (7.6,-0.8) .. (7.4,-1);
	\end{tikzpicture}
\]

Moreover, the braiding and its inverse will be represented by
\[
\begin{tikzpicture}
	\draw (8,0.5) circle (0.25cm);
	\node at (8,0.5) {\footnotesize{$c$}};
	\draw (8.175,0.675) .. controls (8.25,0.85) and (8.4,0.9) .. (8.5,1);
	\draw (7.825,0.675) .. controls (7.75,0.85) and (7.6,0.9) .. (7.5,1);
	\draw (7.825,0.325) .. controls (7.75,0.25) and (7.6,0.2) .. (7.5,0);
	\draw (8.175,0.325) .. controls (8.25,0.25) and (8.4,0.2) .. (8.5,0);
\end{tikzpicture}
	\qquad \raisebox{4mm}{\text{and}} \qquad
\begin{tikzpicture}
	\draw (10,0.5) circle (0.25cm);
	\node at (10.02,0.54) {\footnotesize{$c^{\scaleto{-1}{3 pt}}$}};
	\draw (10.175,0.675) .. controls (10.25,0.85) and (10.4,0.9) .. (10.5,1);
	\draw (9.825,0.675) .. controls (9.75,0.85) and (9.6,0.9) .. (9.5,1);
	\draw (9.825,0.325) .. controls (9.75,0.25) and (9.6,0.2) .. (9.5,0);
	\draw (10.175,0.325) .. controls (10.25,0.25) and (10.4,0.2) .. (10.5,0);
\end{tikzpicture}
\]
respectively.

In what follows, we will sum up some basic definitions in the context in which we are working.

\begin{Definition}
A non-unital algebra in {\sf C} is a pair $A=(A,\mu_{A})$, where $A$ is an object in~{\sf C} and~$\mu_{A}\colon A\otimes A\rightarrow A$ is a morphism in {\sf C} called the product of $A$, satisfying that $\mu_{A}$ is associative, i.e., $\mu_{A}\circ (\mu_{A}\otimes A)=\mu_{A}\circ(A\otimes\mu_{A})$. A non-unital algebra $(A,\mu_{A})$ is said to be an algebra if there exists a morphism $\eta_{A}\colon K\rightarrow A$ in {\sf C}, called the unit of the algebra, such that $\mu_{A}\circ(\eta_{A}\otimes A)={\rm id}_{A}=\mu_{A}\circ(A\otimes\eta_{A})$. Following the graphical notation, for each algebra $A$, the product $\mu_{A}$ and the unit $\eta_{A}$ will be represented by
\[
\begin{tikzpicture}
	\draw (8,0.5) circle (0.25cm);
	\node at (8,0.5) {\footnotesize{$\mu_{A}$}};
	\draw (8.175,0.675) .. controls (8.4,0.8) .. (8.6,1);
	\draw (7.825,0.675) .. controls (7.6,0.8) .. (7.4,1);
	\draw (8,0.25) -- (8,0);
\end{tikzpicture}
	\qquad \raisebox{3mm}{\text{and}} \qquad
\begin{tikzpicture}
	\node at (10,0.45) {$\eta_{A}=$};
	\draw (10.6,0.8) -- (10.6,0.1);
	\draw (10.6,0.85) circle (0.05 cm);
\end{tikzpicture}
\]

When there is no risk of confusion with the product which is being used, the circle in the previous diagram will be omitted.

Given $A=(A,\mu_{A})$ and $B=(B,\mu_{B})$ non-unital algebras, $f\colon A\rightarrow B$ is a morphism of non-unital algebras if $f\circ\mu_{A}=\mu_{B}\circ(f\otimes f)$. If $A$ and $B$ are algebras with units $\eta_{A}$ and $\eta_{B}$, respec\-tively,~$f$~is an algebra morphism if it also satisfies that $f\circ\eta_{A}=\eta_{B}$. Moreover, $A\otimes B$ admits a structure of non-unital algebra whose product is given by $\mu_{A\otimes B}\coloneqq (\mu_{A}\otimes\mu_{B})\circ(A\otimes c_{B,A}\otimes B)$ and, in the unital case, $A\otimes B$ is also an algebra with unit $\eta_{A\otimes B}\coloneqq\eta_{A}\otimes\eta_{B}$.
\end{Definition}
\begin{Definition}
A coalgebra in {\sf C} is a triple $D=(D,\varepsilon_{D},\delta_{D})$, where $D$ is an object in {\sf C} and~${\varepsilon_{D}\colon D\rightarrow K}$ and $\delta_{D}\colon D\rightarrow D\otimes D$ are morphisms in {\sf C} called the counit and the coproduct, respectively, verifying that $(\varepsilon_{D}\otimes D)\circ\delta_{D}={\rm id}_{D}=(D\otimes\varepsilon_{D})\circ\delta_{D}$ and also that $\delta_{D}$ is coassociative, which means that $(\delta_{D}\otimes D)\circ\delta_{D}=(D\otimes \delta_{D})\circ\delta_{D}$. Graphically, the coproduct $\delta_{D}$ and the counit~$\varepsilon_{D}$ will be represented by
\[
\begin{tikzpicture}
 \draw (8,-0.5) circle (0.25cm);
 \node at (8,-0.5) {\footnotesize{$\delta_{D}$}};
 \draw (8,-0.25) -- (8,0);

 \draw (8.175,-0.675) .. controls (8.4,-0.8) .. (8.6,-1);
 \draw (7.825,-0.675) .. controls (7.6,-0.8) .. (7.4,-1);
\end{tikzpicture}
	\qquad \raisebox{2.5mm}{\text{and}} \qquad
\begin{tikzpicture}
 \draw (10.65,-0.8) -- (10.65,-0.1); 
 \draw (10.65,-0.85) circle (0.05 cm); 
 \node at (10,-0.58) {$\varepsilon_{D}=$};
\end{tikzpicture}
\]

As in the case of the product, when there is no risk of confusion with the coproduct which is being used, again the circle in the previous diagram will be omitted.

If $D=(D,\varepsilon_{D},\delta_{D})$ and $E=(E,\varepsilon_{E},\delta_{E})$ are coalgebras in {\sf C}, a morphism $f\colon D\rightarrow E$ is a~coalgebra morphism if $\delta_{E}\circ f=(f\otimes f)\circ\delta_{D}$ and $\varepsilon_{E}\circ f=\varepsilon_{D}$. Note also that the tensor product $D\otimes E$ has a natural coalgebra structure given by $\delta_{D\otimes E}\coloneqq (D\otimes c_{D,E}\otimes E)\circ(\delta_{D}\otimes\delta_{E})$ and $\varepsilon_{D\otimes E}\coloneqq\varepsilon_{D}\otimes\varepsilon_{E}$.
\end{Definition}
\begin{Definition}
Let $A=(A,\mu_{A})$ be a non-unital algebra and $M$ an object in {\sf C}. A non-unital left $A$-module is a pair $(M,\varphi_{M})$, where $\varphi_{M}\colon A\otimes M\rightarrow M$ is a morphism in {\sf C} called the action of $A$ over $M$, satisfying that $\varphi_{M}\circ(A\otimes\varphi_{M})=\varphi_{M}\circ(\mu_{A}\otimes M)$. When $A$ is an algebra with unit $\eta_{A}$ and $(M,\varphi_{M})$ is a non-unital left $A$-module, $(M,\varphi_{M})$ is said to be a left $A$-module if the condition $\varphi_{M}\circ(\eta_{A}\otimes M)={\rm id}_{M}$ holds too. Following the graphical notation, the action $\varphi_{M}$ will be represented by
\[
\begin{tikzpicture}
	\draw (8,0.5) circle (0.25cm);
	\node at (8,0.5) {\scriptsize{$\varphi_{M}$}};
	\draw (8.175,0.675) .. controls (8.5,0.8) ..(8.6,1);
	\draw (7.825,0.675) .. controls (7.5,0.8) .. (7.4,1);
	\draw (8,0.25) -- (8,0);
\end{tikzpicture}
\]

If $(M,\varphi_{M})$ and $(N,\varphi_{N})$ are non-unital left $A$-modules, $f\colon M\rightarrow N$ is a morphism of non-unital left $A$-modules if $f$ is $A$-linear, i.e., $f\circ\varphi_{M}=\varphi_{N}\circ(A\otimes f)$. With these morphisms, non-unital left $A$-modules constitute a category for which left $A$-modules are a full subcategory.
\end{Definition}
\begin{Definition}
Consider $(X,\mu_{X})$ a non-unital algebra and $(X,\varepsilon_{X},\delta_{X})$ a coalgebra in {\sf C}. ${X=(X,\mu_{X},\varepsilon_{X},\delta_{X})}$ is a non-unital bialgebra in {\sf C} if $\mu_{X}$ is a coalgebra morphism. A non-unital bialgebra $X=(X,\mu_{X},\varepsilon_{X},\delta_{X})$ is said to be a bialgebra in {\sf C} if there exists a morphism $\eta_{X}\colon K\rightarrow X$ such that $(X,\eta_{X},\mu_{X})$ is an algebra and $\eta_{X}$ is a coalgebra morphism in {\sf C}. Note that $\eta_{X}$ and $\mu_{X}$ are coalgebra morphisms if and only if $\varepsilon_{X}$ and $\delta_{X}$ are algebra morphisms, respectively.

A morphism $f\colon X\rightarrow Y$ is a morphism of non-unital bialgebras in {\sf C} if $f$ is a morphism of non-unital algebras and coalgebras simultaneously. When $X$ and $Y$ are bialgebras, $f$ is a~bialgebra morphism if it also satisfies that $f\circ\eta_{X}=\eta_{Y}$.
\end{Definition}
Consider a left module $M$ with action $\varphi_{M}$. It is possible that $M$ may have an additional structure, e.g., it may be an algebra or a coalgebra. So, in what follows, we introduce the structures of module (co)algebra which require some compatibility conditions between the (co)algebra structure and the module action.
\begin{Definition}
Let $X=(X,\mu_{X},\varepsilon_{X},\delta_{X})$ be a non-unital bialgebra and $A=(A,\eta_{A},\mu_{A})$ an algebra in {\sf C}. A pair $(A,\varphi_{A})$ is a non-unital left $X$-module algebra if $(A,\varphi_{A})$ is a non-unital left $X$-module such that $\eta_{A}$ and $\mu_{A}$ are morphisms of non-unital left $X$-modules which means that the equalities
\begin{gather*}
\varphi_{A}\circ (X\otimes \eta_{A})=\varepsilon_{X}\otimes \eta_{A},\qquad
\varphi_{A}\circ (X\otimes \mu_{A})=\mu_{A}\circ \varphi_{A\otimes A}
\end{gather*}
hold, where $\varphi_{A\otimes A}=(\varphi_{A}\otimes \varphi_{A})\circ (X\otimes c_{X,A}\otimes A)\circ (\delta_{X}\otimes A\otimes A)$ is the left action on $A\otimes A$. When $X$ is a bialgebra, a non-unital left $X$-module algebra $(A,\varphi_{A})$ is said to be a left $X$-module algebra if $(A,\varphi_{A})$ is a left $X$-module.
\end{Definition}
\begin{Definition}
Let $X=(X,\mu_{X},\varepsilon_{X},\delta_{X})$ be a~non-unital bialgebra and $D=(D,\varepsilon_{D},\delta_{D})$ a~coalgebra in {\sf C}. A pair $(D,\varphi_{D})$ is said to be a non-unital left $X$-module coalgebra if $(D,\varphi_{D})$ is a non-unital left $X$-module satisfying that $\varepsilon_{D}$ and $\delta_{D}$ are morphisms of non-unital left $X$-modules, that is to say, the following equalities hold:
\begin{gather}\label{mod-coalg1}
\varepsilon_{D}\circ\varphi_{D}=\varepsilon_{X}\otimes\varepsilon_{D},\\\label{mod-coalg2}\delta_{D}\circ\varphi_{D}=\varphi_{D\otimes D}\circ(X\otimes\delta_{D}).
\end{gather}

Note that \eqref{mod-coalg1} and \eqref{mod-coalg2} are equivalent to the fact that $\varphi_{D}$ is a coalgebra morphism. In~case that $X$ is a bialgebra, a non-unital left $X$-module coalgebra $(D,\varphi_{D})$ is said to be a left $X$-module coalgebra if $(D,\varphi_{D})$ is a left $X$-module.
\end{Definition}
Next step consists in introducing the concept of Hopf algebra in {\sf C}. In these particular objects the convolution operation is highly relevant, so we define it first.
\begin{Definition}
Let $A=(A,\eta_{A},\mu_{A})$ be an algebra and $D=(D,\varepsilon_{D},\delta_{D})$ a coalgebra in {\sf C}. $\Hom(D,A)$ will denote the set of morphisms in {\sf C} from $D$ to $A$. With the convolution product, $f\ast g\coloneqq\mu_{A}\circ(f\otimes g)\circ\delta_{D}$, $\Hom(D,A)$ is an algebra with unit element $\eta_{A}\circ\varepsilon_{D}=\varepsilon_{D}\otimes\eta_{A}$.
\end{Definition}
\begin{Definition}
If $X=(X,\eta_{X},\mu_{X},\varepsilon_{X},\delta_{X})$ is a bialgebra in {\sf C}, we will say $X$ is a Hopf algebra in {\sf C} if there exists a morphism $\lambda_{X}\colon X\rightarrow X$, called the antipode, satisfying that
\begin{equation}\label{antipode}
\lambda_{X}\ast {\rm id}_{X}=\varepsilon_{X}\otimes\eta_{X}={\rm id}_{X}\ast\lambda_{X},
\end{equation}
that is to say, $\lambda_{X}$ is the convolution inverse of ${\rm id}_{X}$ in $\Hom(X,X)$.

A morphism between Hopf algebras, $f\colon X\rightarrow Y$, is a Hopf algebra morphism if $f$ is an algebra-coalgebra morphism. Note that both, $\lambda_{Y}\circ f$ and $f\circ\lambda_{X}$, are inverses of $f$ for the convolution in $\Hom(X,Y)$. So, due to the uniqueness of the inverse,
\begin{equation}\label{morant}
\lambda_{Y}\circ f=f\circ\lambda_{X}.
\end{equation}

Given a Hopf algebra $X$, we will say that $X$ is commutative if $\mu_{X}\circ c_{X,X}=\mu_{X}$, and cocommutative if $c_{X,X}\circ\delta_{X}=\delta_{X}$.

Moreover, in every Hopf algebra $X$, the following properties of the antipode $\lambda_{X}$ are satisfied: On the one hand, $\lambda_{X}$ is unique, antimultiplicative and anticomultiplicative, i.e., the equalities
\begin{gather}
\lambda_{X}\co \mu_{X}= \mu_{X}\co (\lambda_{X}\ot \lambda_{X})\co c_{X,X},\nonumber
\\
\label{a-antip2}\delta_{X}\co \lambda_{X}=c_{X,X}\co (\lambda_{X}\ot \lambda_{X})\co \delta_{X}
\end{gather}
hold, and, on the other hand, $\lambda_{X}$ leaves the unit and the counit invariant, that is to say,
\begin{gather}
\lambda_{X}\co \eta_{X}= \eta_{X},\qquad
\label{u-antip2}\varepsilon_{X}\co \lambda_{X}=\varepsilon_{X}.
\end{gather}

Thus, it is a direct consequence of the previous equalities that, when $X$ is commutative,~$\lambda_{X}$~is an algebra morphism and, in case that $X$ is cocommutative, $\lambda_{X}$ is a coalgebra morphism. In~addition, when $X$ is commutative or cocommutative, $\lambda_{X}\ast(\lambda_{X}\circ\lambda_{X})=\varepsilon_{X}\otimes\eta_{X}=(\lambda_{X}\circ\lambda_{X})\ast\lambda_{X}$ and so, owing to the uniqueness of the inverse for the convolution in $\Hom(X,X)$, $\lambda_{X}\circ\lambda_{X}={\rm id}_{X}$.

To finish, it will be relevant along the paper that if $X$ is a cocommutative Hopf algebra in {\sf C}, then the inverse of $c_{X,X}$ is $c_{X,X}$ itself, so the identity
\begin{equation}
\label{ccb}
c_{X,X}\circ c_{X,X}={\rm id}_{X\otimes X}
\end{equation}
holds (see \cite[Corollary 5]{Sch}).
\end{Definition}
\begin{Remark}
Every Hopf algebra $X$ in {\sf C} has a structure of left module algebra over itself with the so called adjoint action $\varphi_{X}^{{\rm ad}}\coloneqq \mu_{X}\circ(\mu_{X}\otimes\lambda_{X})\circ(X\otimes c_{X,X})\circ(\delta_{X}\otimes X)$. If $X$ is also cocommutative, then $\bigl(X,\varphi_{X}^{{\rm ad}}\bigr)$ is a left $X$-module algebra-coalgebra.
\end{Remark}
The proof of the following theorem can be found in \cite[Theorem 1.8]{FGR}.
\begin{Theorem}\label{th.interest}\samepage
Let $X=(X,\eta_{X},\mu_{X},\varepsilon_{X},\delta_{X},\lambda_{X})$ and $H=(H,\eta_{H},\mu_{H},\varepsilon_{H},\delta_{H},\lambda_{H})$ be Hopf algebras in $\sf{C}$ such that there exists a morphism $\varphi_{H}\colon X\otimes H\rightarrow H$ satisfying the following conditions:
\begin{itemize}\itemsep=0pt
\item[$(i)$] $\varphi_{H}\circ(X\otimes\mu_{H})=\mu_{H}\circ(\varphi_{H}\otimes\varphi_{H})\circ(X\otimes c_{X,H}\otimes H)\circ(\delta_{X}\otimes H\otimes H),$
\item[$(ii)$] $\varphi_{H}$ is a coalgebra morphism.
\end{itemize}
Then, $\varphi_{H}\circ(X\otimes\eta_{H})=\varepsilon_{X}\otimes\eta_{H}$ holds.
\end{Theorem}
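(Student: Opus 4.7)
\medskip

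\noindent\textbf{Proof plan.} Write $f \coloneqq \varphi_{H}\circ(X\otimes\eta_{H})\colon X\rightarrow H$. The statement to be proved is precisely that $f$ coincides with the unit $\eta_{H}\circ\varepsilon_{X}$ of the convolution algebra $\Hom(X,H)$, so the strategy is to show two things about $f$: first, that it is convolution-idempotent, $f\ast f = f$; second, that it is a coalgebra morphism, hence has a two-sided convolution inverse. Cancelling that inverse in the identity $f\ast f = f$ will immediately give $f = \eta_{H}\circ\varepsilon_{X}$.

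\medskip

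\noindent\emph{Step 1: $f\ast f = f$.} Precompose condition~(i) with $X\otimes\eta_{H}\otimes\eta_{H}\colon X\rightarrow X\otimes H\otimes H$. On the left-hand side, $\mu_{H}\circ(\eta_{H}\otimes\eta_{H})=\eta_{H}$ collapses the product, giving exactly $f$. On the right-hand side, I use two facts: naturality of the braiding together with the strict identity $c_{X,K}={\rm id}_{X}$ yields $c_{X,H}\circ(X\otimes\eta_{H}) = \eta_{H}\otimes X$, which allows one to slide the two copies of $\eta_{H}$ past the braiding. What remains after simplification is $\mu_{H}\circ(f\otimes f)\circ\delta_{X}$, that is, $f\ast f$. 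Hence $f\ast f=f$.

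\medskip

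\noindent\emph{Step 2: $f$ is a coalgebra morphism.} Since $\eta_{H}$ is a coalgebra morphism one has $\varepsilon_{H}\circ\eta_{H}={\rm id}_{K}$ and $\delta_{H}\circ\eta_{H}=\eta_{H}\otimes\eta_{H}$. Precomposing the two coalgebra-morphism identities for $\varphi_{H}$ (condition~(ii)) with $X\otimes\eta_{H}$, the counit equation gives $\varepsilon_{H}\circ f = \varepsilon_{X}$, while the coproduct equation, after the same braiding simplification as in Step~1, reduces to $\delta_{H}\circ f = (f\otimes f)\circ\delta_{X}$. Thus $f$ is a coalgebra morphism from $X$ to $H$.

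\medskip

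\noindent\emph{Step 3: conclude.} Because $f$ is a coalgebra morphism between (bi)Hopf algebras, its two-sided convolution inverse in $\Hom(X,H)$ is $\lambda_{H}\circ f$; indeed, $f\ast(\lambda_{H}\circ f) = \mu_{H}\circ(H\otimes\lambda_{H})\circ(f\otimes f)\circ\delta_{X} = \mu_{H}\circ(H\otimes\lambda_{H})\circ\delta_{H}\circ f = (\eta_{H}\circ\varepsilon_{H})\circ f = \eta_{H}\circ\varepsilon_{X}$, and symmetrically on the other side using~\eqref{antipode}. Convolving the equality $f\ast f=f$ of Step~1 on the right by $\lambda_{H}\circ f$ and using associativity of $\ast$ gives $f = f\ast(\eta_{H}\circ\varepsilon_{X}) = \eta_{H}\circ\varepsilon_{X} = \varepsilon_{X}\otimes\eta_{H}$, as desired.

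\medskip

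\noindent The only delicate point is the braiding bookkeeping in Steps~1 and~2 (making sure that specialising one tensor factor to $\eta_{H}$ really does reduce the right-hand sides to convolution expressions); once that is in place the proof is just the standard idempotent-with-inverse argument in the convolution algebra.
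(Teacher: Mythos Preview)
Your proof is correct. The paper does not give its own argument for this statement but cites \cite[Theorem~1.8]{FGR}; your approach --- showing that $f=\varphi_{H}\circ(X\otimes\eta_{H})$ is a convolution-idempotent coalgebra morphism and then cancelling its convolution inverse $\lambda_{H}\circ f$ --- is the standard one and is almost certainly what appears in the cited reference.
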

As a direct corollary we obtain that, if $(H,\varphi_{H})$ is a left $X$-module coalgebra such that $\mu_{H}$ is $X$-linear, then $(H,\varphi_{H})$ is a left $X$-module algebra too.

Finally, we are going to recall the notion of Hopf truss introduced by T. Brzezi\'nski in \cite{BRZ1}, which are a generalization of Hopf braces (see \cite{AGV}). So, Hopf trusses are defined as follows in the braided monoidal setting.

\begin{Definition}\label{H-truss}
Let $H=(H,\varepsilon_{H},\delta_{H})$ be a coalgebra in $\sf{C}$. Let us assume that there are an algebra structure $\bigl(H, \eta_{H}, \mu_{H}^1\bigr)$ defined on $H$, an associative product $\mu_{H}^2\colon H\otimes H\rightarrow H$ and two endomorphisms of $H$ denoted by $\lambda_{H}$ and $\sigma_{H}$. We will say that
\[\bigl(H, \eta_{H}, \mu_{H}^{1}, \mu_{H}^{2}, \varepsilon_{H}, \delta_{H}, \lambda_{H}, \sigma_{H}\bigr)\]
is a Hopf truss in {\sf C} if
\begin{itemize}\itemsep=0pt
\item[(i)] $H_{1}=\bigl(H, \eta_{H}, \mu_{H}^{1}, \varepsilon_{H}, \delta_{H}, \lambda_{H}\bigr)$ is a Hopf algebra in {\sf C}.
\item[(ii)] $H_{2}=\bigl(H, \mu_{H}^{2}, \varepsilon_{H}, \delta_{H}\bigr)$ is a non-unital bialgebra in {\sf C}.
\item[(iii)] The morphism $\sigma_{H}$ is a coalgebra morphism satisfying the following equality:
\begin{gather*}
\mu_{H}^{2}\circ \bigl(H\ot \mu_{H}^{1}\bigr)=\mu_{H}^{1}\circ \bigl(\mu_{H}^{2}\otimes \Gamma_{H_{1}}^{\sigma_{H}} \bigr)\circ (H\otimes c_{H,H}\otimes H)\circ (\delta_{H}\otimes H\otimes H),\\
\includegraphics[scale=3]{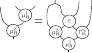}
\end{gather*}
where $\Gamma_{H_{1}}^{\sigma_{H}}\coloneqq\mu_{H}^{1}\circ \bigl((\lambda_{H}\circ\sigma_{H})\otimes \mu_{H}^{2}\bigr)\circ (\delta_{H}\otimes H),$ i.e.,
\[
\includegraphics[scale=3]{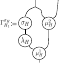}
\]
\end{itemize}

Following \cite{GONROD}, a Hopf truss will be denoted by ${\mathbb H}=(H_{1}, H_{2},\sigma_{H})$ (or simply by $\mathbb{H}$) and the morphism $\sigma_{H}$ is called the cocycle of $\mathbb{H}$.

We will say that a Hopf truss $\mathbb{H}$ is cocommutative if the underlying coalgebra $(H,\varepsilon_{H},\delta_{H})$ is cocommutative, that is to say, if $\delta_{H}=c_{H,H}\circ\delta_{H}$.
\end{Definition}
\begin{Remark}
A Hopf truss $\mathbb{H}$ is a Hopf brace if there exists a morphism $S_{H}\colon H\rightarrow H$ such that $H_{2}=\bigl(H,\eta_{H},\mu_{H}^{2},\varepsilon_{H},\delta_{H},S_{H}\bigr)$ is a Hopf algebra in $\sf{C}$ and $\sigma_{H}={\rm id}_{H}$.
\end{Remark}
\begin{Definition}
Let $\mathbb{H}=(H_{1},H_{2},\sigma_{H})$ and $\mathbb{B}=(B_{1},B_{2},\sigma_{B})$ be Hopf trusses in $\sf{C}$. A~morphism~$f$ between the two underlying objects is a morphism of Hopf trusses if $f\colon H_{1}\rightarrow B_{1}$ is a~Hopf algebra morphism and $f\colon H_{2}\rightarrow B_{2}$ is a morphism of non-unital bialgebras.

As was proved in \cite[Proposition 6.8]{BRZ1}, for every $f\colon \mathbb{H}\rightarrow \mathbb{B}$ morphism of Hopf trusses, the equality
\begin{equation}\label{Cond mor truss}
\sigma_{B}\circ f=f\circ \sigma_{H}
\end{equation}
holds.
\end{Definition}
Therefore, Hopf trusses give rise to a category that we will denote by $\sf{HTr}$. Cocommutative Hopf trusses form a full subcategory of Hopf trusses that we will denote by $\sf{coc}\textnormal{-}{\sf HTr}$.

The proofs that we can find in \cite[Section 6]{BRZ1} can be replicated in the braided monoidal setting since they do not depend on the symmetry of the category ${\mathbb F}$-{\sf Vect}. Then we have the following property: Given a Hopf truss $\mathbb{H}$ in $\sf{C}$ the second product $\mu_{H}^{2}$ admits the following expression:
\begin{equation}\label{mu2Htruss}
\mu_{H}^{2}=\mu_{H}^{1}\circ\bigl(\sigma_{H}\otimes\Gamma_{H_{1}}^{\sigma_{H}}\bigr)\circ(\delta_{H}\otimes H).
\end{equation}

By \cite[Lemma 6.2]{BRZ1}, it is known that the cocycle $\sigma_{H}$ is fully determined by $\eta_{H}$ and the product~$\mu_{H}^{2}$ in the following way:
\begin{equation*}
\sigma_{H}=\mu_{H}^{2}\circ(H\otimes\eta_{H}).
\end{equation*}

Then, as a consequence of the associativity for the product $\mu_{H}^{2}$, we have that
\begin{equation}\label{cocycle truss 2}
\sigma_{H}\circ\mu_{H}^{2}=\mu_{H}^{2}\circ(H\otimes\sigma_{H})
\end{equation}
holds. In addition, by \cite[Theorem 6.5]{BRZ1} we know that $\bigl(H_{1},\Gamma_{H_{1}}^{\sigma_{H}}\bigr)$ is a non-unital left $H_{2}$-module algebra.

This introductory section concludes with the notion of finite object in $\sf{C}$.
\begin{Definition}
An object $P$ in $\sf{C}$ is finite if there exists an object $P^{\ast}$, called the dual of $P$, and a $\sf{C}$-adjunction $P\otimes -\dashv P^{\ast}\otimes -$ between the tensor functors.
\end{Definition}

For example, if ${\sf C}$ is the category of left modules over a commutative ring $R$, then $P$ is finite if and only if $P$ is finitely generated and projective. Moreover, it is well known that for every finite Hopf algebra in a braided monoidal category its antipode is an isomorphism (see \cite{LP}) and, as a consequence, the associated category of Yetter--Drinfeld modules is an example of braided monoidal category.

We will denote by $a_{P}$ and $b_{P}$ the unit and the counit of the previous $\sf{C}$-adjunction, respectively. Then,
$a_{P}(K)\colon K\rightarrow P^{\ast}\otimes P$ and $b_{P}(K)\colon P\otimes P^{\ast}\rightarrow K,$
which will be represented graphically by
\[
\begin{tikzpicture}
	\node at (6,0.5) {$a_{P}(K)=$};
	\draw (7.25,0.8) -- (7.25,0.6);
	\draw (7.25,0.85) circle (0.05 cm);
	\draw plot [domain=6.85:7.65, samples=50] (\x, {0.6 - 2*(\x-7.25)^2});
\end{tikzpicture}
	\qquad \raisebox{2.5mm}{\text{and}} \qquad
\begin{tikzpicture}
	\node at (9.55,0.5) {$b_{P}(K)=$};
	\draw (10.75,0.3) -- (10.75,0.5);
	\draw (10.75,0.25) circle (0.05 cm);
	\draw plot [domain=10.35:11.15, samples=50] (\x, {0.5 + 2*(\x-10.75)^2});
\end{tikzpicture}
\]

By the properties of the adjunction, the following equalities hold:
\begin{gather*}(b_{P}(K)\otimes P)\circ(P\otimes a_{P}(K))={\rm id}_{P},\\(P^{\ast}\otimes b_{P}(K))\circ(a_{P}(K)\otimes P^{\ast})={\rm id}_{P^{\ast}},\end{gather*}
i.e.,
\[
\includegraphics[scale=3.5]{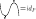} \raisebox{4.5mm}{\text{${}= \mathrm{id}_P $\qquad \text{and} \qquad}}
\includegraphics[scale=2.8]{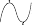} \raisebox{4.5mm}{\text{${}= \mathrm{id}_{P^*}$,}}
\]
respectively. Finite objects in $\sf{C}$ constitute a full subcategory of $\sf{C}$ that we will denote by $\sf{C^{f}}$.

Note that, for every finite object $P$ in $\sf{C}$, we have a natural algebra structure in $\sf{C}$ over the tensor object $P^{\ast}\otimes P$ as we can see in the following lemma, whose proof is straightforward.
\begin{Lemma}
Let $P$ be a finite object in $\sf{C}$, then $P^{\ast}\otimes P$ is an algebra in $\sf{C}$ with product given by $\mu_{P^{\ast}\otimes P}\coloneqq P^{\ast}\otimes b_{P}(K)\otimes P$ and unit $\eta_{P^{\ast}\otimes P}\coloneqq a_{P}(K)$.
\end{Lemma}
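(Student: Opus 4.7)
The plan is to verify the two algebra axioms (associativity of $\mu_{P^{\ast}\otimes P}$ and the unit conditions with $\eta_{P^{\ast}\otimes P}$) directly from the triangle identities of the adjunction $P\otimes -\dashv P^{\ast}\otimes -$, which in the present language are precisely the two identities
\[
(b_{P}(K)\otimes P)\circ(P\otimes a_{P}(K))={\rm id}_{P},\qquad (P^{\ast}\otimes b_{P}(K))\circ(a_{P}(K)\otimes P^{\ast})={\rm id}_{P^{\ast}}
\]
recorded immediately before the lemma.

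For associativity, I would expand both sides of $\mu_{P^{\ast}\otimes P}\circ(\mu_{P^{\ast}\otimes P}\otimes (P^{\ast}\otimes P))$ and $\mu_{P^{\ast}\otimes P}\circ((P^{\ast}\otimes P)\otimes \mu_{P^{\ast}\otimes P})$. Since $\mu_{P^{\ast}\otimes P}$ is defined by inserting $b_{P}(K)$ between the middle $P$ and $P^{\ast}$ of $P^{\ast}\otimes P\otimes P^{\ast}\otimes P$, both compositions reduce to the single morphism $P^{\ast}\otimes b_{P}(K)\otimes b_{P}(K)\otimes P$ from $P^{\ast}\otimes P\otimes P^{\ast}\otimes P\otimes P^{\ast}\otimes P$ to $P^{\ast}\otimes P$. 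In a strict monoidal category this is immediate from the interchange law; no non-trivial use of the adjunction is needed here.

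For the unit axioms, I would handle the two compositions $\mu_{P^{\ast}\otimes P}\circ(\eta_{P^{\ast}\otimes P}\otimes (P^{\ast}\otimes P))$ and $\mu_{P^{\ast}\otimes P}\circ((P^{\ast}\otimes P)\otimes \eta_{P^{\ast}\otimes P})$ separately. The first expands to $(P^{\ast}\otimes b_{P}(K)\otimes P)\circ(a_{P}(K)\otimes P^{\ast}\otimes P)$, which by functoriality of $\otimes$ factors as $\bigl((P^{\ast}\otimes b_{P}(K))\circ(a_{P}(K)\otimes P^{\ast})\bigr)\otimes P$, and this equals ${\rm id}_{P^{\ast}}\otimes P={\rm id}_{P^{\ast}\otimes P}$ by the second triangle identity. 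The second expands to $(P^{\ast}\otimes b_{P}(K)\otimes P)\circ(P^{\ast}\otimes P\otimes a_{P}(K))$, which factors as $P^{\ast}\otimes\bigl((b_{P}(K)\otimes P)\circ(P\otimes a_{P}(K))\bigr)$, and this equals $P^{\ast}\otimes {\rm id}_{P}={\rm id}_{P^{\ast}\otimes P}$ by the first triangle identity.

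There is no real obstacle here; the whole content of the lemma is a graphical rewriting of the two triangle identities, and the most delicate point is merely bookkeeping the positions of the tensor factors so that the snake-like diagrams for $a_{P}(K)$ and $b_{P}(K)$ line up correctly. A one-line pictorial argument using the cup/cap notation displayed just above the lemma suffices, which is presumably why the authors remark that the proof is straightforward.
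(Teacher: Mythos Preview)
Your proposal is correct and matches the paper's approach: the paper simply declares the proof ``straightforward'' and omits it, and the verification you outline via the interchange law for associativity and the two triangle identities for the unit axioms is exactly the intended routine check.
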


\section{(Weak) twisted post-Hopf algebras and Hopf trusses}\label{s2}
Post-Hopf algebras are a particular kind of structures introduced by Y. Li, Y. Sheng and R. Tang in \cite{LST} in the category of vector spaces over a field $\mathbb{F}$. These objects are specially interesting because their respective category is isomorphic to the category of Hopf braces under cocommutativity hypothesis (see \cite[Theorem 2.12]{LST}). This concept was generalized for an arbitrary braided monoidal category {\sf C} in \cite{FGR}, which is not a straightforward step, and also the categorical isomorphism with the category of Hopf braces was extended to this framework (see \cite[Theorem~3.16 and Corollary~3.17]{FGR}). Subsequently, (weak) twisted post-Hopf algebras were introduced by~S. Wang in \cite{Wang} for the category ${}_{\mathbb{F}}{\sf Vect}$ with the aim of generalising all the above-mentioned results for Hopf trusses as can be consulted along \cite[Section 6]{Wang}.

In this section, the notion of (weak) twisted post-Hopf algebra is going to be extended to the braided monoidal setting and we are going to prove two fundamental results: Firstly, an isomorphism between the categories of Hopf trusses and weak twisted post-Hopf algebras is obtained in which, in contrast with \cite[Theorem 6.7]{Wang}, the hypothesis of cocommutativity is replaced by a weaker condition and, in addition, we will also prove that, when the base category admits split idempotents, every twisted post-Hopf algebra induces a new Hopf algebra structure.
\begin{Definition}\label{WTPH}
A weak twisted post-Hopf algebra in $\sf{C}$ is a~triple $(H,m_{H},\Phi_{H})$ where $H$ is a~Hopf algebra in $\sf{C}$ and $m_{H}\colon H\otimes H\rightarrow H$ and $\Phi_{H}\colon H\rightarrow H$ are morphisms in $\sf{C}$ satisfying the following conditions:
\begin{itemize}\itemsep=0pt
	\item[(i)] $m_{H}$ is a coalgebra morphism, which means that the following equalities hold:
	\begin{itemize}\itemsep=0pt
	\item[(i.1)] $\delta_{H}\circ m_{H}=(m_{H}\otimes m_{H})\circ(H\otimes c_{H,H}\otimes H)\circ(\delta_{H}\otimes\delta_{H})$,
	\item[(i.2)] $\varepsilon_{H}\circ m_{H}=\varepsilon_{H}\otimes\varepsilon_{H}.$
	\end{itemize}
	\item[(ii)] $\Phi_{H}$ is a coalgebra morphism, that is to say
	\begin{itemize}\itemsep=0pt
	\item[(ii.1)]$\delta_{H}\circ\Phi_{H}=(\Phi_{H}\otimes\Phi_{H})\circ\delta_{H}$,
	\item[(ii.2)]$\varepsilon_{H}\circ\Phi_{H}=\varepsilon_{H}$.

	\end{itemize}
	\item[(iii)] $\Phi_{H}\circ\mu_{H}\circ(\Phi_{H}\otimes m_{H})\circ(\delta_{H}\otimes H)=\mu_{H}\circ(\Phi_{H}\otimes m_{H})\circ(\delta_{H}\otimes \Phi_{H})$, i.e.,
	\[
 \includegraphics[scale=3]{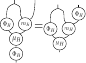}
	\]

	\item[(iv)] $m_{H}\circ(H\otimes m_{H})=m_{H}\circ((\mu_{H}\circ(\Phi_{H}\otimes m_{H})\circ(\delta_{H}\otimes H))\otimes H)$, i.e.,
	\[
	\includegraphics[scale=3]{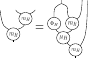}
	\]	
	\item[(v)] $m_{H}\circ(H\otimes\mu_{H})=\mu_{H}\circ(m_{H}\otimes m_{H})\circ(H\otimes c_{H,H}\otimes H)\circ(\delta_{H}\otimes H\otimes H)$, i.e.,
	\[
	\includegraphics[scale=3]{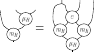}
	\]	
\end{itemize}

The morphism $\Phi_{H}$ will be called the cocycle of the weak twisted post-Hopf algebra $H$.
Moreover, if $H$ is a finite Hopf algebra and the conditions
\begin{itemize}\itemsep=0pt
	\item[(vi)] $\Phi_{H}\circ\eta_{H}=\eta_{H}$, i.e., the cocycle $\Phi_{H}$ preserves the unit,
	\item[(vii)] the morphism
	\[\alpha_{H}\coloneqq (H^{\ast}\otimes m_{H})\circ(c_{H,H^{\ast}}\otimes H)\circ(H\otimes a_{H}(K))\colon\ H\rightarrow H^{\ast}\otimes H\]
	\[
	\includegraphics[scale=3.5]{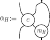}
	\]
is convolution invertible in $\Hom(H,H^{\ast}\otimes H)$, which means that there exists
 \[
 \beta_{H}\colon\ H\rightarrow H^{\ast}\otimes H
 \]
 such that
\begin{align*}
 (H^{\ast}\otimes b_{H}(K)\otimes H)\circ(\alpha_{H}\otimes\beta_{H})\circ\delta_{H}&{}=\varepsilon_{H}\otimes a_{H}(K)\\
 &{}=(H^{\ast}\otimes b_{H}(K)\otimes H)\circ (\beta_{H}\otimes\alpha_{H})\circ\delta_{H},
 \end{align*}
	i.e.,
	\[
	\includegraphics[scale=3]{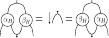}
	\]
\end{itemize}
hold, then the triple $(H,m_{H},\Phi_{H})$ is said to be a twisted post-Hopf algebra.
\end{Definition}
\begin{Definition}
Let $(H,m_{H},\Phi_{H})$ and $(B,m_{B},\Phi_{B})$ be weak twisted post-Hopf algebras in $\sf{C}$. We will say that $f\colon (H,m_{H},\Phi_{H})\rightarrow (B,m_{B},\Phi_{B})$ is a morphism of weak twisted post-Hopf algebras if $f\colon H\rightarrow B$ is a Hopf algebra morphism such that
\begin{gather}
\label{Cond1 mor TPH}f\circ m_{H}=m_{B}\circ(f\otimes f),\\
\label{Cond2 mor TPH} \Phi_{B}\circ f=f\circ \Phi_{H}.
\end{gather}
\end{Definition}
Therefore, weak twisted post-Hopf algebras give rise to a category that we will denote by ${\sf wt}\textnormal{-}\sf{Post}\textnormal{-}\sf{Hopf}$. In addition, twisted post-Hopf algebras constitute a full subcategory of weak twisted post-Hopf algebras that we will denote by ${\sf t}\textnormal{-}\sf{Post}\textnormal{-}\sf{Hopf}$. If the underlying Hopf algebra is cocommutative, the structure $(H,m_{H},\Phi_{H})$ is referred to as a cocommutative weak twisted post-Hopf algebra. The corresponding full subcategory is denoted as $\sf{coc}\textnormal{-}\sf{wt}\textnormal{-}\sf{Post}\textnormal{-}\sf{Hopf}$. In the twisted and cocommutative setting, this full subcategory is denoted as $\sf{coc}\textnormal{-}\sf{t}\textnormal{-}\sf{Post}\textnormal{-}\sf{Hopf}$.
\begin{Remark}
Note that \cite[Definition 6.3]{Wang}, that is the definition of (weak) twisted post-Hopf algebras proposed by S. Wang in the category ${}_{\mathbb{F}}\sf{Vect}$, always requires cocommutativity of the underlying Hopf algebra. In the previous definition, this requirement was omitted.

Note also that when ${\sf C}={\sf Set}$ the category of sets, which is an example of braided monoidal category whose braiding is defined by the usual flip, then $(H,m_{H},\Phi_{H})$ is a (weak) twisted post-Hopf algebra if and only $(H,m_{H},\Phi_{H})$ is a (weak) twisted post-group following the definition introduced in \cite[Definition 2.1]{Wang}.
\end{Remark}
\begin{Example}\label{ExampleTwPHA}
If ${\sf C}$ is the category of vector spaces over a field ${\mathbb F}$, then every twisted post-Hopf algebra in ${\sf C}$ is a post-Hopf algebra in the sense of Li, Sheng and Tang if $\Phi_{H}={\rm id}_{H}$. In the same setting and under finite conditions, every Yetter--Drinfeld post-Hopf algebra in Sciandra's sense (see \cite{SCI}) is an example of twisted post-Hopf algebra in a braided monoidal category (in this case this category is the category of Yetter--Drinfeld modules over the subadjacent Hopf algebra) where $\Phi_{H}={\rm id}_{H}$. Note that Yetter--Drinfeld post-Hopf algebras are usual post-Hopf algebras in the cocommutative setting and also note that the category of Yetter--Drinfeld post-Hopf algebras is isomorphic to the category of Yetter--Drinfeld braces introduced by Ferri and Sciandra in \cite{FSCI}.
\end{Example}
\begin{Lemma}
Let $(H,m_{H},\Phi_{H})$ be an object in ${\sf wt}\textnormal{-}\sf{Post}\textnormal{-}\sf{Hopf}$. The equality
\begin{equation}\label{mHprop1}
m_{H}\circ(H\otimes\eta_{H})=\varepsilon_{H}\otimes\eta_{H}
\end{equation}
holds. Moreover, when the Hopf algebra $H$ is finite,
\begin{equation}\label{lem1prop1}
m_{H}\circ c_{H,H}^{-1}=( b_{H}(K)\otimes H)\circ(H\otimes\alpha_{H})
\end{equation}
holds. Then,
\begin{equation}\label{lem1prop2}
m_{H}=( b_{H}(K)\otimes H)\circ(H\otimes\alpha_{H})\circ c_{H,H}
\end{equation}
also holds and, if $(H,m_{H},\Phi_{H})$ is an object in ${\sf t}\textnormal{-}\sf{Post}\textnormal{-}\sf{Hopf}$, we obtain that
\begin{equation}\label{mHprop2}
m_{H}\circ(\eta_{H}\otimes H)={\rm id}_{H}.
\end{equation}
\end{Lemma}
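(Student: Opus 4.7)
The four assertions are essentially independent and I would handle them in turn. Equation \eqref{mHprop1} is immediate from Theorem \ref{th.interest} with $X = H$ and $\varphi_H = m_H$: hypothesis (i) of that theorem is precisely axiom (v) of Definition \ref{WTPH}, hypothesis (ii) is axiom (i), and the conclusion reads $m_H\circ (H\otimes\eta_H) = \varepsilon_H\otimes\eta_H$.

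To prove \eqref{lem1prop1} I would exploit the adjunction $H\otimes - \dashv H^*\otimes -$ provided by finiteness of $H$. The bijection $\Hom(H\otimes H, H)\cong \Hom(H, H^*\otimes H)$ sends $g$ to $(H^*\otimes g)\circ (a_H(K)\otimes H)$, with inverse $h\mapsto (b_H(K)\otimes H)\circ (H\otimes h)$. Applying the forward direction to $m_H\circ c_{H,H}^{-1}$ and invoking the braided--dual coherence
\[
(H^*\otimes c_{H,H}^{-1})\circ (a_H(K)\otimes H) = (c_{H,H^*}\otimes H)\circ (H\otimes a_H(K))
\]
(a naturality and zig-zag argument for the left dual $H^*$) rewrites the adjoint as $(H^*\otimes m_H)\circ (c_{H,H^*}\otimes H)\circ (H\otimes a_H(K)) = \alpha_H$. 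Inverting the adjunction then yields \eqref{lem1prop1}, and composing on the right with $c_{H,H}$ gives \eqref{lem1prop2}.

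For \eqref{mHprop2}, set $f := m_H\circ (\eta_H\otimes H)$; the plan is to show $f\circ f = f$ and that $f$ admits a right inverse $n_0$, from which $f = f\circ f\circ n_0 = f\circ n_0 = {\rm id}_H$ follows. Idempotence comes from precomposing axiom (iv) with $\eta_H\otimes \eta_H\otimes H$: combining $\delta_H\circ\eta_H = \eta_H\otimes\eta_H$, axiom (vi), equation \eqref{mHprop1}, and the unit axiom for $\mu_H$ shows that $\mu_H\circ(\Phi_H\otimes m_H)\circ(\delta_H\otimes H)$ sends $\eta_H\otimes\eta_H$ to $\eta_H$, and axiom (iv) then reduces to $f\circ f = f$. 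For the right inverse, precompose the convolution identity $\beta_H\ast\alpha_H = \varepsilon_H\otimes a_H(K)$ with $\eta_H$: using $\delta_H\circ\eta_H = \eta_H\otimes\eta_H$ this yields $\mu_{H^*\otimes H}\circ ((\beta_H\circ\eta_H)\otimes (\alpha_H\circ\eta_H)) = a_H(K)$ in $H^*\otimes H$. A short computation using naturality of $c_{H,H^*}$ against $\eta_H$ identifies $\alpha_H\circ\eta_H = (H^*\otimes f)\circ a_H(K)$; substituting and simplifying the product in $H^*\otimes H$ produces $(H^*\otimes f)\circ (\beta_H\circ\eta_H) = a_H(K)$. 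Tensoring on the left with $H$, composing with $b_H(K)\otimes H$, and applying the zig-zag identity $(b_H(K)\otimes H)\circ (H\otimes a_H(K)) = {\rm id}_H$ yields $f\circ n_0 = {\rm id}_H$ with $n_0 := (b_H(K)\otimes H)\circ (H\otimes (\beta_H\circ\eta_H))$, and combining with idempotence delivers the required $f = {\rm id}_H$.

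The main obstacle is the braided--dual bookkeeping: one must pass $c_{H,H}^{-1}$ through the left dual correctly to identify $\alpha_H$ as the adjoint of $m_H\circ c_{H,H}^{-1}$, and the simplification of the product in $H^*\otimes H$ after substituting $\alpha_H\circ\eta_H = (H^*\otimes f)\circ a_H(K)$ needs careful use of the evaluation/coevaluation in order to extract a right inverse for $f$.
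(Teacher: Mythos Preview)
Your proposal is correct and follows essentially the same route as the paper: \eqref{mHprop1} via Theorem~\ref{th.interest} using axioms (i) and (v); \eqref{lem1prop1}--\eqref{lem1prop2} via the adjunction $H\otimes-\dashv H^{\ast}\otimes-$ (the paper defers this to \cite[Lemma~3.4]{FGR}); and \eqref{mHprop2} by first showing $f=m_{H}\circ(\eta_{H}\otimes H)$ is idempotent from axiom (iv) and then producing a one-sided inverse from the convolution invertibility of $\alpha_{H}$ (the paper defers this second step to \cite[Lemma~3.5]{FGR}). Your explicit construction of the right inverse $n_{0}=(b_{H}(K)\otimes H)\circ(H\otimes(\beta_{H}\circ\eta_{H}))$ and the identification $\alpha_{H}\circ\eta_{H}=(H^{\ast}\otimes f)\circ a_{H}(K)$ are exactly the computations hidden behind that citation, so the arguments coincide.
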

\begin{proof}
Firstly, by conditions (i) and (v) of Definition \ref{WTPH}, the proof of \eqref{mHprop1} follows by Theorem~\ref{th.interest}. Moreover, the proof of \eqref{lem1prop1} and \eqref{lem1prop2} is completely analogous to the proof of \mbox{\cite[Lemma~3.4]{FGR}}.
In addition, note that morphism $m_{H}\circ(\eta_{H}\otimes H)$ is idempotent when $(H,m_{H},\Phi_{H})$ is a~twisted post-Hopf algebra. Indeed,
\begin{gather*}
 m_{H}\circ(\eta_{H}\otimes(m_{H}\circ(\eta_{H}\otimes H)))\\
 \qquad{}= m_{H}\circ((\mu_{H}\circ(\Phi_{H}\otimes m_{H})\circ ((\delta_{H}\circ\eta_{H})\otimes\eta_{H}))\otimes H)\ \text{\footnotesize\textnormal{(by (iv) of Definition \ref{WTPH})}}\\
 \qquad{}= m_{H}\circ((\mu_{H}\circ((\Phi_{H}\circ\eta_{H})\otimes(m_{H}\circ(\eta_{H}\otimes\eta_{H}))))\otimes H)\\
 \qquad\quad{}\
 \text{\footnotesize\textnormal{(by the condition of coalgebra morphism for $\eta_{H}$)}}\\
 \qquad{}= m_{H}\circ((\mu_{H}\circ(\eta_{H}\otimes(m_{H}\circ(\eta_{H}\otimes\eta_{H}))))\otimes H)\ \text{\footnotesize\textnormal{(by (vi) of Definition \ref{WTPH})}}\\
 \qquad{}= (\varepsilon_{H}\circ\eta_{H})\otimes(m_{H}\circ(\eta_{H}\otimes H))\ \text{\footnotesize\textnormal{(by unit property and \eqref{mHprop1})}}\\
 \qquad{}= m_{H}\circ(\eta_{H}\otimes H)\ \text{\footnotesize\textnormal{(by (co)unit properties)}}.
\end{gather*}

Following the rest of the proof as in \cite[Lemma 3.5]{FGR}, \eqref{mHprop2} holds.
\end{proof}

\begin{Theorem}\label{nuHbar}
Let $(H,m_{H},\Phi_{H})$ be an object in ${\sf wt}\textnormal{-}\sf{Post}\textnormal{-}\sf{Hopf}$. If
\begin{gather}\label{classcocommH}
(m_{H}\otimes H)\circ(H\otimes c_{H,H})\circ ((c_{H,H}\circ\delta_{H})\otimes H)=(m_{H}\otimes H)\circ(H\otimes c_{H,H})\circ(\delta_{H}\otimes H),
\end{gather}
 i.e.,
\[
\includegraphics[scale=3]{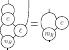}
\]
\noindent holds, then $\overline{H}=(H,\overline{\mu}_{H},\varepsilon_{H},\delta_{H})$ is a non-unital bialgebra in ${\sf C}$, where \[\overline{\mu}_{H}\coloneqq\mu_{H}\circ(\Phi_{H}\otimes m_{H})\circ(\delta_{H}\otimes H),\]
i.e.,
\[
\includegraphics[scale=3]{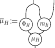}
\]

Moreover, note that
\begin{equation}\label{cond1WTPHA}\overline{\mu}_{H}\circ(H\otimes\eta_{H})=\Phi_{H},\end{equation}
and, if $(H,m_{H},\Phi_{H})$ is an object in ${\sf t}\textnormal{-}\sf{Post}\textnormal{-}\sf{Hopf}$, then
\begin{equation}\label{muetaid}
\overline{\mu}_{H}\circ(\eta_{H}\otimes H)={\rm id}_{H}.
\end{equation}
\end{Theorem}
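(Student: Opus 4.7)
The plan is to address separately the three claims of the theorem: that $\overline{H}$ is a non-unital bialgebra (which reduces to associativity of $\overline{\mu}_H$ plus $\overline{\mu}_H$ being a coalgebra morphism with respect to $\varepsilon_H$ and $\delta_H$), and the two identities \eqref{cond1WTPHA} and \eqref{muetaid}. The identities are short: for \eqref{cond1WTPHA}, compose $\overline{\mu}_H$ with $H \otimes \eta_H$, use \eqref{mHprop1} to collapse $m_H \circ (H \otimes \eta_H)$ into $\varepsilon_H \otimes \eta_H$, and apply the counit and unit axioms; for \eqref{muetaid}, compose with $\eta_H \otimes H$, use that $\eta_H$ is a coalgebra morphism, axiom (vi) (i.e., $\Phi_H \circ \eta_H = \eta_H$), the unit axiom of $\mu_H$, and \eqref{mHprop2}. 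The counit compatibility $\varepsilon_H \circ \overline{\mu}_H = \varepsilon_H \otimes \varepsilon_H$ is immediate from axioms (i.2) and (ii.2) combined with the counit axiom of $\delta_H$.

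For the associativity of $\overline{\mu}_H$, my plan is to reduce both sides to a common expression indexed by an iterated coproduct of the first argument. On one hand, $\overline{\mu}_H \circ (H \otimes \overline{\mu}_H)$ unfolds via the defining formula and axiom (v), which distributes $m_H$ over the inner $\mu_H$, producing a three-factor product indexed by a ternary coproduct. On the other hand, $\overline{\mu}_H \circ (\overline{\mu}_H \otimes H)$ requires computing $\delta_H \circ \overline{\mu}_H$ using the coalgebra morphism properties of $\mu_H$, $\Phi_H$, and $m_H$; the resulting four legs of the iterated coproduct are then tamed by axiom (iii) (to push the outer $\Phi_H$ past a $\mu_H$-$m_H$-pair) and axiom (iv) (to convert $m_H \circ (H \otimes m_H)$ into $m_H \circ (\overline{\mu}_H \otimes H)$, collapsing a $\delta_H$-pair back together). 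The remaining discrepancy between the two expressions is a transposition of two middle legs of the iterated coproduct; this is exactly what hypothesis \eqref{classcocommH} delivers, once an appropriate coassociativity re-parenthesization of $(\delta_H \otimes \delta_H) \circ \delta_H$ displays those legs as a genuine $\delta_H$-pair of an intermediate factor.

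The coalgebra morphism property of $\overline{\mu}_H$ with respect to $\delta_H$ follows by an analogous normal-form comparison: expand $\delta_H \circ \overline{\mu}_H$ using that $\mu_H$ is a coalgebra morphism together with (i.1) and (ii.1), expand the target $(\overline{\mu}_H \otimes \overline{\mu}_H) \circ (H \otimes c_{H,H} \otimes H) \circ (\delta_H \otimes \delta_H)$ in parallel, and observe that the mismatch is again a transposition of two middle legs, resolved by the same \eqref{classcocommH}-plus-coassociativity move. The main obstacle throughout is precisely this combinatorial bookkeeping of iterated coproducts: one has to identify the correct re-parenthesization so that the legs to be swapped appear as a $\delta_H$-pair of some intermediate factor, after which the cocommutativity-class hypothesis \eqref{classcocommH}, suitably tensored and post-composed with the remaining data (in particular with $\Phi_H$, by naturality), performs the swap. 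In the fully cocommutative case one has $c_{H,H} \circ \delta_H = \delta_H$ and the move is trivial, so \eqref{classcocommH} is exactly the minimal weakening that makes the whole argument go through.
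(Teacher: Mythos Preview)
Your proposal is correct and follows essentially the same route as the paper: the same axioms (i.1), (ii.1), (iii), (iv), (v), \eqref{mHprop1}, \eqref{mHprop2}, and the coalgebra-morphism property of $\mu_H$ and $\eta_H$ are invoked at the same junctures, and the hypothesis \eqref{classcocommH} is used precisely, as you say, to swap two middle legs of an iterated coproduct once coassociativity has put them in position. The only cosmetic difference is the order in which the paper applies the ingredients in the associativity chain (it uses \eqref{classcocommH} before (iii)--(iv) rather than after), but the underlying manipulation is identical.
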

\begin{proof}
Let us start the proof with the associative character of $\overline{\mu}_{H}$,
\begin{gather*}
 \overline{\mu}_{H}\circ (\overline{\mu}_{H}\otimes H )\\
= \mu_{H}\circ (\Phi_{H}\otimes m_{H} ) \circ (((\mu_{H}\otimes\mu_{H})\circ (H\otimes c_{H,H}\otimes H )\circ ((\delta_{H}\circ\Phi_{H})\otimes (\delta_{H}\circ m_{H})))\otimes H)\\
\quad{}\circ (\delta_{H}\otimes H\otimes H )\ \text{\footnotesize\textnormal{(by the condition of coalgebra morphism for $\mu_{H}$)}}\\
= \mu_{H}\circ (\Phi_{H}\otimes m_{H} )\circ ( ( (\mu_{H}\otimes \mu_{H} )\circ (H\otimes c_{H,H}\otimes H ) )\otimes H )\\
\quad{} \circ ( ( (\Phi_{H}\otimes \Phi_{H} )\circ\delta_{H} )\otimes ( (m_{H}\otimes m_{H} )\circ (H\otimes c_{H,H}\otimes H )\circ (\delta_{H}\otimes\delta_{H} ) )\otimes H )\\
\quad{} \circ (\delta_{H}\otimes H\otimes H )\ \text{\footnotesize\textnormal{(by (i.1) and (ii.1) of Definition \ref{WTPH})}}\\
= \mu_{H}\circ ( (\Phi_{H}\circ\mu_{H}\circ (\Phi_{H}\otimes H ) )\otimes (m_{H}\circ ( (\mu_{H}\circ (\Phi_{H}\otimes m_{H} ) )\otimes H ) ) )\\
\quad{} \circ (H\otimes ( (m_{H}\otimes H )\circ (H\otimes c_{H,H} )\circ ( (c_{H,H}\circ\delta_{H} )\otimes H ) )\otimes H\otimes H\otimes H )\\
\quad{} \circ (H\otimes ( (H\otimes c_{H,H}\otimes H )\circ (\delta_{H}\otimes\delta_{H} ) )\otimes H )\circ (\delta_{H}\otimes H\otimes H )\\
\quad{}\, \text{\footnotesize\textnormal{(by naturality of $c$ and coassociativity of $\delta_{H}$)}}\\
= \mu_{H}\\
\quad{}\circ( (\Phi_{H}\circ\mu_{H}\circ (\Phi_{H}\otimes m_{H} )\circ (\delta_{H}\otimes H ) )\otimes(m_{H}\circ( (\mu_{H}\circ (\Phi_{H}\otimes m_{H} )\circ (\delta_{H}\otimes H ) )\otimes H)))\\
\quad{}\circ ( ( (H\otimes c_{H,H}\otimes H )\circ (\delta_{H}\otimes\delta_{H} ) )\otimes H ) \ \text{\footnotesize\textnormal{(by~\eqref{classcocommH}, coassociativity of $\delta_{H}$ and naturality of $c$)}}\\
= \mu_{H}\circ ( (\mu_{H}\circ (\Phi_{H}\otimes m_{H} )\circ (\delta_{H}\otimes \Phi_{H} ) )\otimes (m_{H}\circ (H\otimes m_{H} ) ) )\\
\quad{} \circ ( ( (H\otimes c_{H,H}\otimes H )\circ (\delta_{H}\otimes\delta_{H} ) )\otimes H )\ \text{\footnotesize\textnormal{(by (iii) and (iv) of Definition \ref{WTPH})}}\\
= \mu_{H}\circ (\Phi_{H}\otimes (\mu_{H}\circ (m_{H}\otimes m_{H} )\circ (H\otimes c_{H,H}\otimes H )\circ (\delta_{H}\otimes H\otimes H ) ) )\\
\quad{} \circ (\delta_{H}\otimes ( (\Phi_{H}\otimes m_{H} )\circ (\delta_{H}\otimes H ) ) )\\
\quad{}\, \text{\footnotesize\textnormal{(by naturality of $c$, coassociativity of $\delta_{H}$ and associativity of $\mu_{H}$)}}\\
= \overline{\mu}_{H}\circ (H\otimes\overline{\mu}_{H} )\ \text{\footnotesize\textnormal{(by (v) of Definition \ref{WTPH})}}.
\end{gather*}

Moreover, $\overline{\mu}_{H}$ is a coalgebra morphism. Indeed, it is straightforward to prove that $\varepsilon_{H}\circ \overline{\mu}_{H}=\varepsilon_{H}\otimes\varepsilon_{H}$ and also
\begin{align*}
 \delta_{H}\circ\overline{\mu}_{H}={}& (\mu_{H}\otimes\mu_{H})\circ(H\otimes c_{H,H}\otimes H)\circ(\delta_{H}\otimes\delta_{H})\circ(\Phi_{H}\otimes m_{H})\circ(\delta_{H}\otimes H)\\
 &{}\text{\footnotesize\textnormal{(by the condition of coalgebra morphism for $\mu_{H}$)}}\\
 ={}& (\mu_{H}\otimes\mu_{H})\circ(H\otimes c_{H,H}\otimes H)\\
 &{}\circ(((\Phi_{H}\otimes\Phi_{H})\circ\delta_{H})\otimes((m_{H}\otimes m_{H})\circ(H\otimes c_{H,H}\otimes H)\circ(\delta_{H}\otimes\delta_{H})))\\
 &{}\circ(\delta_{H}\otimes H) \ \text{\footnotesize\textnormal{(by (i.1) and (ii.1) of Definition \ref{WTPH})}}\\
 ={}& (\mu_{H}\otimes\mu_{H})\circ(\Phi_{H}\otimes((m_{H}\otimes \Phi_{H})\circ(H\otimes c_{H,H})\circ((c_{H,H}\circ\delta_{H})\otimes H))\otimes m_{H})\\
 &{}\circ(\delta_{H}\otimes c_{H,H}\otimes H)\circ(\delta_{H}\otimes\delta_{H})\ \text{\footnotesize\textnormal{(by naturality of $c$ and coassociativity of $\delta_{H}$)}}\\
 ={}& (\mu_{H}\otimes\mu_{H})\circ(\Phi_{H}\otimes((m_{H}\otimes \Phi_{H})\circ(H\otimes c_{H,H})\circ(\delta_{H}\otimes H))\otimes m_{H})\\
 &{}\circ(\delta_{H}\otimes c_{H,H}\otimes H)\circ(\delta_{H}\otimes\delta_{H})\ \text{\footnotesize\textnormal{(by \eqref{classcocommH})}}\\
 ={}& (\overline{\mu}_{H}\otimes\overline{\mu}_{H})\circ(H\otimes c_{H,H}\otimes H)\circ(\delta_{H}\otimes\delta_{H})\\
 &{}\text{\footnotesize\textnormal{(by coassociativity of $\delta_{H}$, naturality of $c$ and definition of $\overline{\mu}_{H}$)}}.
\end{align*}

Therefore,
\[
(H,\overline{\mu}_{H},\varepsilon_{H},\delta_{H})
\]
is a non-unital bialgebra.

The proof of \eqref{cond1WTPHA} follows by (\ref{mHprop1}) and by the (co)unit properties because
\[{\mu}_{H}\circ(H\otimes\eta_{H})=\mu_{H}\circ(\Phi_{H}\otimes(\eta_{H}\circ\varepsilon_{H}))\circ\delta_{H}=\Phi_{H}.\]

To finish, let us see that \eqref{muetaid} holds when $(H,m_{H},\Phi_{H})$ is a twisted post-Hopf algebra,
\begin{gather*}
 \overline{\mu}_{H}\circ(\eta_{H}\otimes H)\\
 \qquad{}= \mu_{H}\circ((\Phi_{H}\circ\eta_{H})\otimes(m_{H}\circ(\eta_{H}\otimes H)))\ \text{\footnotesize\textnormal{(by the condition of coalgebra morphism for $\eta_{H}$)}}\\
 \qquad{}= \mu_{H}\circ(\eta_{H}\otimes H)\ \text{\footnotesize\textnormal{(by (vi) of Definition \ref{WTPH} and \eqref{mHprop2})}}\\
 \qquad{}= {\rm id}_{H}\ \text{\footnotesize\textnormal{(by unit properties)}}. \tag*{\qed}
\end{gather*}
\renewcommand{\qed}{}
\end{proof}

\begin{Remark}
If ${\sf C}$ is a symmetric category, condition \eqref{classcocommH} means that $(H,m_{H})$ is in the cocommutativity class of $\overline{H}$ following the notion introduced in \cite[Definitions~2.1 and~2.2]{CCH}. This condition has its roots in the study of the notion of bicrossed product (or double cross product) of Hopf algebras introduced by Majid in \cite{M1990} (see also \cite{M1995}). The construction of this type of products is closely associated with the notion of matched pair of Hopf algebras. A matched pair of Hopf algebras in a symmetric category ${\sf C}$ is a quadruple $(A,H,\varphi_{A}, \phi_{H})$, where $A$ and $H$ are Hopf algebras, $(A,\varphi_{A})$ is a left $H$-module coalgebra, $(H, \phi_{H}) $ is a right $A$-module coalgebra and the following compatibility conditions hold:
\begin{gather*}
\varphi_{A}\circ (H\otimes \eta_{A})= \varepsilon_{H}\otimes \eta_{A}, \\
\varphi_{A}\circ (H\otimes \mu_{A})= \mu_{A}\circ (A\otimes \varphi_{A})\circ (\Psi \otimes A ), \\
\phi_{H}\circ (\eta_{H}\otimes A)= \varepsilon_{A}\otimes \eta_{H}, \\
\phi_{H}\circ (\mu_{H}\otimes A)=\mu_{H}\circ (\phi_{H}\otimes A)\circ (H\otimes \Psi), \\
c_{A,H}\circ \psi=(\phi_{H}\otimes \varphi_{A})\circ \delta_{H\otimes A},
\end{gather*}
where $\Psi=( \varphi_{A}\otimes \phi_{H})\circ \delta_{H\otimes A}$.

If $(A,H,\varphi_{A}, \phi_{H})$ is a matched pair of Hopf algebras, the bicrossed product of Hopf algebras $A\bowtie H$
 is the tensor coalgebra with the product and antipode defined as follows:
 \begin{gather*}
 \mu_{A\bowtie H}=(\mu_{A}\otimes \mu_{H})\circ (A\otimes \Psi\otimes H),\\
 \lambda_{A\bowtie H}=\Psi\circ (\lambda_{H}\otimes \lambda_{A})\circ c_{A,H}.
 \end{gather*}

 Moreover, under the previous conditions, $A\bowtie H$ is a Hopf algebra if and only if $(A,H,\varphi_{A}, \phi_{H})$ is a matched pair of Hopf algebras.

It is well known that the construction of $A\bowtie H$ gives a complete answer to the factorization problem, i.e., describe all Hopf algebras $L$ for which, giving two Hopf algebras $A$ and $H$ such that there exist Hopf algebra monomorphisms $i_{A}\colon A\rightarrow L$ and $i_{H}\colon H\rightarrow L$, the morphism $\mu_{L}\circ (i_{A}\otimes i_{H})\colon A\otimes H\rightarrow L$ is an isomorphism. As we can see in \cite[Theorem 7.2.3]{M1995}, a Hopf algebra $L$ factorizes through the Hopf algebras $A$ and $H$ if and only if there exists a matched pair of Hopf algebras $(A,H,\varphi_{A}, \phi_{H})$ and a Hopf algebra isomorphism between $L$ and $A\bowtie H$.

An example of the previous bicrossed product is the one where $\phi_{H}= H\otimes\varepsilon_{A}$, i.e., $\phi_{H}$ is trivial. In this case $A\bowtie H$, denoted by $A\ltimes H$, is the semidirect (smash) product introduced by Molnar \cite{Molnar} in the cocommutative Hopf algebra setting. Note that for the semidirect product we have that $\Psi=(\varphi_{A}\otimes H)\circ (H\otimes c_{H,A})\circ (\delta_{H}\otimes A)$ and the following holds: If $(A,\varphi_{A})$ is a left $H$-module coalgebra, $A\ltimes H$ is a Hopf algebra if and only if $(A,\varphi_{A})$ is a left $H$-module algebra and
\begin{gather}\label{classcoc2}
	(\varphi_{A}\otimes H)\circ(H\otimes c_{H,A})\circ((c_{H,H}\circ\delta_{H})\otimes A)=(\varphi_{A}\otimes H)\circ(H\otimes c_{H,A})\circ(\delta_{H}\otimes A)
\end{gather}
holds, i.e., $(A,\varphi_{A})$ is in the cocommutativity class of $H$. Obviously, if $H$ is cocommutative this condition holds automatically. In the category of sets, $A\ltimes H$ is the semidirect product of groups because in the category ${\sf Set}$ Hopf algebras are groups. Finally, remember that in the category of vector spaces over a filed a Hopf algebra $L$ factorizes through a normal Hopf subalgebra~$A$
and a Hopf subalgebra $H$ if and only if $L$ is isomorphic as a Hopf algebra to a~semidirect product~${A\ltimes H}$.

On the other hand, a general notion of crossed module of Hopf algebras was given by Fr\'egier and Wagemann \cite{FW}, who considered two Hopf algebras $A$ and $H$, a morphism $\varphi_A\colon H\ot A\rightarrow A$ such that $(A, \varphi_A)$ is a left $H$-module algebra (coalgebra) and a Hopf algebra morphism $\partial\colon A\rightarrow H$ such that $\partial$ is a morphism of $H$-modules where $H$ carries the $H$-module structure given by the adjoint action for $H$, i.e.,
\begin{equation}
	\label{Freg1}
	\partial\co \varphi_A=\varphi_{H}^{{\rm ad}}\co (H\ot \partial),
\end{equation}
and the Peiffer identity holds, i.e.,
\begin{equation}
	\label{Freg2}
	\varphi_A\co (\partial\ot A)=\varphi_{A}^{{\rm ad}},
\end{equation}
where $\varphi^{{\rm ad}}_{H}$ and $\varphi^{{\rm ad}}_{A}$ denote the adjoint action for $H$ and $A$, respectively. In the same setting, Majid \cite{MA2} gives a notion of crossed module of Hopf algebras (see also \cite{Faria}) assuming (\ref{Freg1}) and~(\ref{Freg2}), the condition of morphism of left $H$-modules for the antipode of $A$ and (\ref{classcoc2}). The~main reason for assuming (\ref{classcoc2}) is that it is a necessary condition to assure that a crossed product of Hopf algebras is compatible with the tensor coproduct in the sense of the semidirect product. We also want to emphasize that, as was proved in \cite{TAC}, (\ref{classcoc2}) allows us to ensure that any Hopf algebra $H$ induces a crossed module of Hopf algebras $(H,H,{\rm id}_H)$, in a similar way to what happen in the group setting.
\end{Remark}

\begin{Theorem}\label{Prop: Weak-nuHTr}
Let $(H,m_{H},\Phi_{H})$ be an object in ${\sf wt}\textnormal{-}\sf{Post}\textnormal{-}\sf{Hopf}$ such that \eqref{classcocommH} holds. Then, the triple
$\overline{\mathbb{H}}=\bigl(H,\overline{H},\Phi_{H}\bigr)$
is an object in $\sf{HTr}$.
\end{Theorem}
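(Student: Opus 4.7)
The plan is to verify the three defining conditions of Definition \ref{H-truss} for the candidate $\overline{\mathbb{H}}=(H,\overline{H},\Phi_{H})$, taking $H_{1}=H$ (with its original Hopf algebra structure and antipode $\lambda_{H}$), $H_{2}=\overline{H}$, and cocycle $\sigma_{H}=\Phi_{H}$. Condition (i), that $H_{1}$ is a Hopf algebra, holds by hypothesis. Condition (ii), that $\overline{H}=(H,\overline{\mu}_{H},\varepsilon_{H},\delta_{H})$ is a non-unital bialgebra, is exactly the content of Theorem \ref{nuHbar}, which applies thanks to the standing assumption \eqref{classcocommH}. For condition (iii), the fact that $\Phi_{H}$ is a coalgebra morphism is immediate from (ii.1) and (ii.2) of Definition \ref{WTPH}, so only the twisted compatibility
\begin{equation*}
\overline{\mu}_{H}\circ(H\otimes\mu_{H})=\mu_{H}\circ\bigl(\overline{\mu}_{H}\otimes\Gamma_{H_{1}}^{\Phi_{H}}\bigr)\circ(H\otimes c_{H,H}\otimes H)\circ(\delta_{H}\otimes H\otimes H)
\end{equation*}
needs to be established, where $\Gamma_{H_{1}}^{\Phi_{H}}=\mu_{H}\circ((\lambda_{H}\circ\Phi_{H})\otimes\overline{\mu}_{H})\circ(\delta_{H}\otimes H)$.

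The key reduction I would carry out first is the identity $\Gamma_{H_{1}}^{\Phi_{H}}=m_{H}$. Expanding $\overline{\mu}_{H}$ inside $\Gamma_{H_{1}}^{\Phi_{H}}$ and using coassociativity, the leftmost two factors read as $(\lambda_{H}\circ\Phi_{H})\otimes\Phi_{H}$ applied to two consecutive legs of an iterated coproduct. Since $\Phi_{H}$ is a coalgebra morphism by (ii.1), this rearranges to $\mu_{H}\circ(\lambda_{H}\otimes H)\circ\delta_{H}\circ\Phi_{H}$ applied to a single leg; the antipode axiom \eqref{antipode} then collapses it to $\eta_{H}\circ\varepsilon_{H}\circ\Phi_{H}$, which by (ii.2) of Definition \ref{WTPH} equals $\eta_{H}\circ\varepsilon_{H}$. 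Absorbing this factor via the counit property leaves exactly $m_{H}$.

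With $\Gamma_{H_{1}}^{\Phi_{H}}=m_{H}$ in hand, the required compatibility reduces to
\begin{equation*}
\overline{\mu}_{H}\circ(H\otimes\mu_{H})=\mu_{H}\circ(\overline{\mu}_{H}\otimes m_{H})\circ(H\otimes c_{H,H}\otimes H)\circ(\delta_{H}\otimes H\otimes H).
\end{equation*}
On the left, unfolding $\overline{\mu}_{H}$ and then invoking axiom (v) of Definition \ref{WTPH} to distribute $m_{H}$ across $\mu_{H}$ produces $\mu_{H}\circ(\mu_{H}\otimes H)\circ(\Phi_{H}\otimes m_{H}\otimes m_{H})$ precomposed with a three-fold coproduct of the first argument and a single braiding on the other two arguments. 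On the right, unfolding $\overline{\mu}_{H}$ and using coassociativity of $\delta_{H}$ together with naturality of $c_{H,H}$ yields the same morphism, finishing the proof.

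The only delicate point I anticipate is the braided bookkeeping in this final comparison, since axiom (v) itself contains a braiding $c_{H,H}$ and there are two independent coproducts of the first argument interacting with the braiding that swaps the second and third factors; however, no cocommutativity hypothesis is needed beyond what is already built into $\overline{H}$ via \eqref{classcocommH}, and the argument proceeds purely by naturality of $c$, coassociativity of $\delta_{H}$, and axiom (v).
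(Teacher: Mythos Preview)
Your proposal is correct and follows essentially the same route as the paper: both invoke Theorem~\ref{nuHbar} for conditions (i)--(ii), establish the key identity $\Gamma_{H}^{\Phi_{H}}=m_{H}$ via the coalgebra morphism property of $\Phi_{H}$ and the antipode axiom, and then reduce the truss compatibility to an application of axiom~(v) of Definition~\ref{WTPH} together with coassociativity and associativity. The only cosmetic difference is that the paper computes the right-hand side directly down to $\overline{\mu}_{H}\circ(H\otimes\mu_{H})$, whereas you expand both sides toward a common intermediate expression.
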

\begin{proof}
Thanks to Theorem \ref{nuHbar}, it is enough to see that (iii) of Definition \ref{H-truss} holds. First of all, note that \begin{equation}\label{GammaPhi=mH}\Gamma_{H}^{\Phi_{H}}=m_{H}.\end{equation}

Indeed,
\begin{gather*}
 \Gamma_{H}^{\Phi_{H}}= \mu_{H}\circ((\mu_{H}\circ((\lambda_{H}\circ\Phi_{H})\otimes\Phi_{H})\circ\delta_{H})\otimes m_{H})\circ(\delta_{H}\otimes H)\\
 \hphantom{\Gamma_{H}^{\Phi_{H}}=}{}\ \text{\footnotesize\textnormal{(by associativity of $\mu_{H}$ and coassociativity of $\delta_{H}$)}}\\
 \hphantom{\Gamma_{H}^{\Phi_{H}}}{}= \mu_{H}\circ(((\lambda_{H}\ast {\rm id}_{H})\circ\Phi_{H})\otimes m_{H})\circ(\delta_{H}\otimes H)\ \text{\footnotesize\textnormal{(by (ii.1) of Definition \ref{WTPH})}}\\
 \hphantom{\Gamma_{H}^{\Phi_{H}}}{}= \mu_{H}\circ((\eta_{H}\circ\varepsilon_{H}\circ\Phi_{H})\otimes m_{H})\circ(\delta_{H}\otimes H)\ \text{\footnotesize\textnormal{(by \eqref{antipode})}}\\
 \hphantom{\Gamma_{H}^{\Phi_{H}}}{}= \mu_{H}\circ((\eta_{H}\circ\varepsilon_{H})\otimes m_{H})\circ(\delta_{H}\otimes H)\ \text{\footnotesize\textnormal{(by (ii.2) of Definition \ref{WTPH})}}\\
 \hphantom{\Gamma_{H}^{\Phi_{H}}}{}= m_{H}\ \text{\footnotesize\textnormal{(by (co)unit property)}}.
\end{gather*}

Therefore, we obtain
\begin{gather*}
 \mu_{H}\circ\bigl(\overline{\mu}_{H}\otimes\Gamma_{H}^{\Phi_{H}}\bigr)\circ(H\otimes c_{H,H}\otimes H)\circ(\delta_{H}\otimes H\otimes H)\\
 \quad{}= \mu_{H}\circ((\mu_{H}\circ(\Phi_{H}\otimes m_{H})\circ(\delta_{H}\otimes H))\otimes m_{H})\circ(H\otimes c_{H,H}\otimes H)\circ(\delta_{H}\otimes H\otimes H)\\
 \qquad{}\;\text{\footnotesize\textnormal{(by definition of $\overline{\mu}_{H}$ and \eqref{GammaPhi=mH})}}\\
 \quad{}= \mu_{H}\circ(\Phi_{H}\otimes(\mu_{H}\circ(m_{H}\otimes m_{H})\circ(H\otimes c_{H,H}\otimes H)\circ(\delta_{H}\otimes H\otimes H)))\circ(\delta_{H}\otimes H\otimes H)\\
 \qquad{}\;\text{\footnotesize\textnormal{(by coassociativity of $\delta_{H}$ and associativity of $\mu_{H}$)}}\\
 \quad{}= \overline{\mu}_{H}\circ(H\otimes\mu_{H})\ \text{\footnotesize\textnormal{(by (v) of Definition \ref{WTPH})}}.
 \tag*{\qed}
\end{gather*}
\renewcommand{\qed}{}
\end{proof}

As a consequence, if we set ${\sf wt}\textnormal{-}{\sf Post}\textnormal{-}{\sf Hopf}^{\star}$ to be the full subcategory of ${\sf wt}\textnormal{-}{\sf Post}\textnormal{-}{\sf Hopf}$ whose objects satisfy \eqref{classcocommH}, then there exists a functor $F\colon \textnormal{{\sf wt}-{\sf Post}-{\sf Hopf}}^{\star}\longrightarrow \sf{HTr}$ defined on objects by $F((H,m_{H},\Phi_{H}))=\overline{\mathbb{H}}$ and on morphisms by the identity. Note that $F$ is well defined on morphisms. Indeed, if $f\colon (H,m_{H},\Phi_{H})\rightarrow (B,m_{B},\Phi_{B})$ is a morphism in ${\sf wt}\textnormal{-}{\sf Post}\textnormal{-}{\sf Hopf}^{\star}$, then\looseness=1
\begin{gather*}
 f\circ\overline{\mu}_{H}= \mu_{B}\circ((f\circ\Phi_{H})\otimes(f\circ m_{H}))\circ(\delta_{H}\otimes H)\ \text{\footnotesize\textnormal{(by the condition of algebra morphism for $f$)}}\\
 \hphantom{f\circ\overline{\mu}_{H}}{}= \mu_{B}\circ(\Phi_{B}\otimes m_{B})\circ(((f\otimes f)\circ\delta_{H})\otimes f)\ \text{\footnotesize\textnormal{(by \eqref{Cond1 mor TPH} and \eqref{Cond2 mor TPH})}}\\
 \hphantom{f\circ\overline{\mu}_{H}}{}= \overline{\mu}_{B}\circ(f\otimes f)\ \text{\footnotesize\textnormal{(by the condition of coalgebra morphism for $f$)}}.\qedhere
\end{gather*}

Now, our aim is to construct a functor in the opposite sense, that is to say, we want to construct a functor from $\sf{HTr}$ to ${\sf wt}\textnormal{-}\sf{Post}$-${\sf Hopf}^{\star}$.
\begin{Theorem}\label{functorG}
Let $\mathbb{H}=(H_{1},H_{2},\sigma_{H})$ be an object in $\sf{HTr}$ such that the condition
\begin{gather}
\bigl(\Gamma_{H_{1}}^{\sigma_{H}}\otimes H\bigr)\hspace{-0.5pt}\circ\hspace{-0.5pt} (H\otimes c_{H,H})\hspace{-0.5pt} \circ\hspace{-0.5pt} ((c_{H,H}\circ\delta_{H})\otimes H)
=\bigl(\Gamma_{H_{1}}^{\sigma_{H}}\hspace{-0.5pt}\otimes\hspace{-0.5pt} H\bigr)\hspace{-0.5pt}\circ\hspace{-0.5pt}(H\otimes c_{H,H})\circ(\delta_{H}\hspace{-0.5pt}\otimes\hspace{-0.5pt} H),\!\!\!\label{classcocomGH}
\end{gather}
that is,
\[
\includegraphics[scale=3]{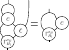}
\]
\noindent holds. Under these hypothesis, $\bigl(H_{1},\Gamma_{H_{1}}^{\sigma_{H}},\sigma_{H}\bigr)$
is an object in ${\sf wt}\textnormal{-}\sf{Post}\textnormal{-}\sf{Hopf}^{\star}$.
\end{Theorem}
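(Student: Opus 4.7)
My plan is to unpack the conditions (i)--(v) of Definition~\ref{WTPH} together with \eqref{classcocommH} for the triple $\bigl(H_{1},\Gamma_{H_{1}}^{\sigma_{H}},\sigma_{H}\bigr)$ and to reduce each of them to a fact that is already recorded in the preliminaries, invoking the hypothesis \eqref{classcocomGH} only where it is genuinely needed. Condition (ii) is free: by Definition~\ref{H-truss}(iii) the cocycle $\sigma_{H}$ of any Hopf truss is a coalgebra morphism. Condition \eqref{classcocommH} for $m_{H}=\Gamma_{H_{1}}^{\sigma_{H}}$ is exactly the assumed equation \eqref{classcocomGH}.

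Next, I would derive (iii). Using \eqref{mu2Htruss}, the left-hand side
$\sigma_{H}\co\mu_{H}^{1}\co\bigl(\sigma_{H}\ot\Gamma_{H_{1}}^{\sigma_{H}}\bigr)\co(\delta_{H}\ot H)$
collapses to $\sigma_{H}\co \mu_{H}^{2}$, and the right-hand side
$\mu_{H}^{1}\co\bigl(\sigma_{H}\ot\Gamma_{H_{1}}^{\sigma_{H}}\bigr)\co(\delta_{H}\ot\sigma_{H})$
collapses to $\mu_{H}^{2}\co(H\ot\sigma_{H})$; these two morphisms agree by \eqref{cocycle truss 2}. For (iv), after substituting the expression \eqref{mu2Htruss} inside the outer $\Gamma_{H_{1}}^{\sigma_{H}}$, the identity becomes $\Gamma_{H_{1}}^{\sigma_{H}}\co \bigl(H\ot \Gamma_{H_{1}}^{\sigma_{H}}\bigr)=\Gamma_{H_{1}}^{\sigma_{H}}\co\bigl(\mu_{H}^{2}\ot H\bigr)$, i.e.\ the associativity of the $H_{2}$-action recalled from \cite[Theorem~6.5]{BRZ1}. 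Condition (v) is the compatibility of this action with $\mu_{H}^{1}$, which is precisely the statement that $\bigl(H_{1},\Gamma_{H_{1}}^{\sigma_{H}}\bigr)$ is a non-unital left $H_{2}$-module \emph{algebra}, again recalled from \cite[Theorem~6.5]{BRZ1}.

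The main obstacle is condition (i), i.e.\ showing that $\Gamma_{H_{1}}^{\sigma_{H}}$ is a coalgebra morphism. The counit equation is routine: expanding the definition of $\Gamma_{H_{1}}^{\sigma_{H}}$, applying $\varepsilon_{H}\co \mu_{H}^{1}=\varepsilon_{H}\ot\varepsilon_{H}$, the counit property for $\sigma_{H}$ and $\mu_{H}^{2}$, and the counit axiom for $\delta_{H}$ gives $\varepsilon_{H}\co \Gamma_{H_{1}}^{\sigma_{H}}=\varepsilon_{H}\ot\varepsilon_{H}$. For the coproduct, I would start from
\[
\delta_{H}\co \Gamma_{H_{1}}^{\sigma_{H}}=\bigl(\mu_{H}^{1}\ot \mu_{H}^{1}\bigr)\co (H\ot c_{H,H}\ot H)\co\bigl((\delta_{H}\co \lambda_{H}\co \sigma_{H})\ot \bigl(\delta_{H}\co\mu_{H}^{2}\bigr)\bigr)\co(\delta_{H}\ot H),
\]
using that $\mu_{H}^{1}$ is a coalgebra morphism. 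Then I would apply \eqref{a-antip2} together with the coalgebra-morphism property of $\sigma_{H}$ to replace $\delta_{H}\co \lambda_{H}\co \sigma_{H}$ by $c_{H,H}\co\bigl((\lambda_{H}\sigma_{H})\ot (\lambda_{H}\sigma_{H})\bigr)\co\delta_{H}$, and expand $\delta_{H}\co\mu_{H}^{2}$ via the bialgebra structure on $H_{2}$. The hard part is absorbing the extra braiding that the anticomultiplicativity of $\lambda_{H}$ introduces on the $a_{(1)}$-leg of the input; this is where the cocommutativity-class hypothesis \eqref{classcocomGH} intervenes. After applying naturality of $c$ and coassociativity to isolate a factor of the form $\bigl(\Gamma_{H_{1}}^{\sigma_{H}}\ot H\bigr)\co(H\ot c_{H,H})\co((c_{H,H}\co\delta_{H})\ot H)$ (arising from the twisted leg), I would use \eqref{classcocomGH} to replace it by $\bigl(\Gamma_{H_{1}}^{\sigma_{H}}\ot H\bigr)\co(H\ot c_{H,H})\co(\delta_{H}\ot H)$ and then recollect the terms into $\bigl(\Gamma_{H_{1}}^{\sigma_{H}}\ot\Gamma_{H_{1}}^{\sigma_{H}}\bigr)\co(H\ot c_{H,H}\ot H)\co(\delta_{H}\ot\delta_{H})$. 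This is the step that genuinely requires \eqref{classcocomGH}, and the bookkeeping of the many applications of coassociativity and naturality of $c$ will be the most delicate part of the argument.
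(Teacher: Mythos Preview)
Your proposal is correct and follows essentially the same route as the paper's proof: conditions (ii)--(v) are dispatched via the Hopf-truss axioms, \eqref{mu2Htruss}, \eqref{cocycle truss 2}, and the non-unital $H_{2}$-module algebra structure from \cite[Theorem~6.5]{BRZ1}, while the only substantive work is showing $\Gamma_{H_{1}}^{\sigma_{H}}$ is a coalgebra morphism, carried out by expanding via the coalgebra-morphism property of $\mu_{H}^{1}$ and $\mu_{H}^{2}$, applying \eqref{a-antip2} and the coalgebra property of $\sigma_{H}$, and then using \eqref{classcocomGH} to absorb the extra braiding before recollecting. One small omission: in the counit computation you will also need \eqref{u-antip2} to simplify $\varepsilon_{H}\co\lambda_{H}$.
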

\begin{proof} Let us see that the triple $\bigl(H_{1},\Gamma_{H_{1}}^{\sigma_{H}},\sigma_{H}\bigr)$ satisfies conditions (i)--(v) of Definition \ref{WTPH}. Thanks to Hopf truss' axioms of Definition \ref{H-truss}, $\sigma_{H}$ is a coalgebra morphism, so (ii) of Definition~\ref{WTPH} holds. Moreover, $\bigl(H_{1}, \Gamma_{H_{1}}^{\sigma_{H}}\bigr)$ is a non-unital left $H_{2}$-module algebra. Therefore, due to this fact and \eqref{mu2Htruss}, conditions (iv) and (v) of Definition \ref{WTPH} hold. Condition (iii) follows by~\eqref{mu2Htruss} and~\eqref{cocycle truss 2}. So, to conclude the proof it is enough to see that $\Gamma_{H_{1}}^{\sigma_{H}}$ is a coalgebra morphism. Indeed,
\begin{gather*}
 \delta_{H}\circ\Gamma_{H_{1}}^{\sigma_{H}} \\
 =\bigl(\mu_{H}^{1}\otimes\mu_{H}^{1}\bigr)\circ(H\otimes c_{H,H}\otimes H) \\
 \quad{} \circ\bigl((\delta_{H}\circ\lambda_{H}\circ\sigma_{H})\otimes\bigl(\bigl(\mu_{H}^{2}\otimes\mu_{H}^{2}\bigr)\circ(H\otimes c_{H,H}\otimes H)\circ(\delta_{H}\otimes\delta_{H})\bigr)\bigr)\circ(\delta_{H}\otimes H)\\
 \quad{}\;\text{\footnotesize\textnormal{\big(by the condition of coalgebra morphism for $\mu_{H}^{k}$, $k=1,2$\big)}} \\
 =\bigl(\mu_{H}^{1}\otimes\mu_{H}^{1}\bigr)\circ(H\otimes c_{H,H}\otimes H)\\
 \quad{}\circ\bigl((c_{H,H}\circ(\lambda_{H}\otimes\lambda_{H})\circ(\sigma_{H}\otimes\sigma_{H})\circ\delta_{H})\!\otimes\!\bigl(\bigl(\mu_{H}^{2}\otimes\mu_{H}^{2}\bigr)
\circ(H\otimes c_{H,H}\otimes H)\circ (\delta_{H}\otimes\delta_{H})\bigr)\bigr)\\
 \quad{}\circ(\delta_{H}\otimes H)\ \text{\footnotesize\textnormal{(by \eqref{a-antip2} and the condition of coalgebra morphism for $\sigma_{H}$)}} \\ =\bigl(\bigl(\mu_{H}^{1}\circ\bigl((\lambda_{H}\circ\sigma_{H})\otimes\mu_{H}^{2}\bigr)\bigr)\otimes\bigl(\mu_{H}^{1}\circ\bigl(\bigl(\lambda_{H}\circ\sigma_{H}\bigr)\otimes\mu_{H}^{2}\bigr)\bigr)\bigr)\\
 \quad{}\circ(H\otimes H\otimes c_{H,H}\otimes H\otimes H) \\
 \quad{}\circ(((H\otimes c_{H,H})\circ(c_{H,H}\otimes H)\circ(H\otimes\delta_{H})\circ\delta_{H})\otimes H\otimes H\otimes H)\circ(H\otimes c_{H,H}\otimes H)\\
 \quad{}\circ(\delta_{H}\otimes\delta_{H})\ \text{\footnotesize\textnormal{(by coassociativity of $\delta_{H}$ and naturality of $c$)}} \\
 =\bigl(H\otimes\bigl(\mu_{H}^{1}\circ\bigl((\lambda_{H}\circ\sigma_{H})\otimes\mu_{H}^{2}\bigr)\bigr)\bigr)\\
 \quad{}\circ\bigl(\bigl(\bigl(\Gamma_{H_{1}}^{\sigma_{H}}\otimes H\bigr)\circ(H\otimes c_{H,H})\circ((c_{H,H}\circ\delta_{H})\otimes H)\bigr)\otimes H\otimes H\bigr)\\
 \quad{}\circ(H\otimes c_{H,H}\otimes H)\circ(\delta_{H}\otimes\delta_{H}) \ \text{\footnotesize\textnormal{\big(by naturality of $c$ and definition of $\Gamma_{H_{1}}^{\sigma_{H}}$\big)}} \\
 =\bigl(H\otimes\bigl(\mu_{H}^{1}\circ\bigl((\lambda_{H}\circ\sigma_{H})\otimes\mu_{H}^{2}\bigr)\bigr)\bigr)\circ\bigl(\bigl(\bigl(\Gamma_{H_{1}}^{\sigma_{H}}\otimes H\bigr)\circ(H\otimes c_{H,H})\circ(\delta_{H}\otimes H)\bigr)\otimes H\otimes H\bigr)\\
 \quad{}\circ(H\otimes c_{H,H}\otimes H)\circ(\delta_{H}\otimes\delta_{H}) \ \text{\footnotesize\textnormal{(by \eqref{classcocomGH})}}\\
 =\bigl(\Gamma_{H_{1}}^{\sigma_{H}}\otimes\Gamma_{H_{1}}^{\sigma_{H}}\bigr)\circ(H\otimes c_{H,H}\otimes H)\circ(\delta_{H}\otimes\delta_{H})\ \text{\footnotesize\textnormal{(by coassociativity of $\delta_{H}$ and naturality of $c$)}},
\end{gather*}
and it is straightforward to see that $\varepsilon_{H}\circ\Gamma_{H_{1}}^{\sigma_{H}}=\varepsilon_{H}\otimes\varepsilon_{H}$, so (i) of Definition \ref{WTPH} also holds.
\end{proof}

\begin{Remark}
When $\sf{C}$ is symmetric, note that \eqref{classcocomGH} means that $\bigl(H_{1},\Gamma_{H_{1}}^{\sigma_{H}}\bigr)$ is in the cocommutativity class of $H_{2}$.
\end{Remark}
From now on, let us denote by~$\sf{HTr}^{\star}$ to the full subcategory of $\sf{HTr}$ whose objects satisfy condition~\eqref{classcocomGH}. Therefore, we can interpret the previous theorem as follows: There exists a functor
$G\colon \sf{HTr}^{\star}\longrightarrow \textnormal{{\sf wt}-{\sf Post}-{\sf Hopf}}^{\star}$
acting on objects by $G(\mathbb{H})=\bigl(H_{1},\Gamma_{H_{1}}^{\sigma_{H}},\sigma_{H}\bigr)$ and on morphisms by the identity. Note that $G$ is well defined on morphisms because, if $f\colon \mathbb{H}=(H_{1},H_{2},\sigma_{H})\rightarrow \mathbb{B}=(B_{1},B_{2},\sigma_{B})$ is a morphism of Hopf trusses, then
\begin{gather}
 f\circ\Gamma_{H_{1}}^{\sigma_{H}} = \mu_{B}^{1}\circ\bigl((f\circ\lambda_{H}\circ\sigma_{H})\otimes\bigl(f\circ \mu_{H}^{2}\bigr)\bigr)\circ(\delta_{H}\otimes H)\nonumber\\
\hphantom{f\circ\Gamma_{H_{1}}^{\sigma_{H}}=}{}
\text{\footnotesize\textnormal{(by the condition of algebra morphism for $f\colon H_{1}\rightarrow B_{1}$)}}\nonumber\\ \hphantom{f\circ\Gamma_{H_{1}}^{\sigma_{H}}}{}
= \mu_{B}^{1}\circ\bigl((\lambda_{B}\circ\sigma_{B})\otimes\mu_{B}^{2}\bigr)\circ(((f\otimes f)\circ\delta_{H})\otimes f)\nonumber\\ \hphantom{f\circ\Gamma_{H_{1}}^{\sigma_{H}}=}{}
 \text{\footnotesize\textnormal{(by the condition of algebra morphism for $f\colon H_{2}\rightarrow B_{2}$, \eqref{morant} and \eqref{Cond mor truss})}}\nonumber\\ \hphantom{f\circ\Gamma_{H_{1}}^{\sigma_{H}}}{}
= \Gamma_{B_{1}}^{\sigma_{B}}\circ(f\otimes f)\ \text{\footnotesize\textnormal{(by the condition of coalgebra morphism for $f$)}}.\label{gammatrussf}
\end{gather}
Note also that if $(H,m_{H},\Phi_{H})$ is an object in ${\sf wt}\textnormal{-}{\sf Post}$-${\sf Hopf}^{\star}$, the Hopf truss $F((H,m_{H},\Phi_{H}))=\overline{\mathbb{H}}$ defined in Theorem \ref{Prop: Weak-nuHTr} belongs to the category $\sf{HTr}^{\star}$, so $F$ admits a restriction from ${\sf wt}\textnormal{-}{\sf Post}$-${\sf Hopf}^{\star}$ to $\sf{HTr}^{\star}$.

The following theorem is one of the most important of this section, and this is the generalization to the braided monoidal framework of \cite[Theorem 6.7]{Wang}. Note also that, in contrast with the result cited before, weak twisted post-Hopf algebras and Hopf trusses are not required to be cocommutative.
\begin{Theorem}\label{th-iso-wtph-htr}
The categories ${\sf wt}\textnormal{-}{\sf Post}\textnormal{-}{\sf Hopf}^{\star}$ and $\sf{HTr}^{\star}$ are isomorphic.
\end{Theorem}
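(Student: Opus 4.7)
The plan is to show that the functors $F\colon {\sf wt}\textnormal{-}{\sf Post}\textnormal{-}{\sf Hopf}^{\star}\to \sf{HTr}^{\star}$ and $G\colon \sf{HTr}^{\star}\to {\sf wt}\textnormal{-}{\sf Post}\textnormal{-}{\sf Hopf}^{\star}$, already constructed in Theorems \ref{Prop: Weak-nuHTr} and \ref{functorG}, are mutually inverse. Since both functors act as the identity on morphisms by construction, the entire argument reduces to checking that $F\circ G$ and $G\circ F$ are the identity on objects.

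For $F\circ G={\rm Id}_{\sf{HTr}^{\star}}$, start with a Hopf truss $\mathbb{H}=(H_{1},H_{2},\sigma_{H})\in \sf{HTr}^{\star}$. Then $G(\mathbb{H})=(H_{1},\Gamma_{H_{1}}^{\sigma_{H}},\sigma_{H})$, and applying $F$ produces a Hopf truss whose underlying Hopf algebra is $H_{1}$, whose cocycle is $\sigma_{H}$, and whose second product is
$$\overline{\mu}_{H_{1}}=\mu_{H}^{1}\circ(\sigma_{H}\otimes\Gamma_{H_{1}}^{\sigma_{H}})\circ(\delta_{H}\otimes H).$$
By the formula \eqref{mu2Htruss}, this is exactly $\mu_{H}^{2}$, and therefore $F(G(\mathbb{H}))=\mathbb{H}$. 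This step is almost immediate given the structural description of the second product of a Hopf truss recorded before Definition \ref{H-truss}.

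For $G\circ F={\rm Id}_{{\sf wt}\textnormal{-}{\sf Post}\textnormal{-}{\sf Hopf}^{\star}}$, start with $(H,m_{H},\Phi_{H})$. Then $F$ produces the Hopf truss $\overline{\mathbb{H}}=(H,\overline{H},\Phi_{H})$, and applying $G$ yields $(H,\Gamma_{H}^{\Phi_{H}},\Phi_{H})$, where by definition
$$\Gamma_{H}^{\Phi_{H}}=\mu_{H}\circ\bigl((\lambda_{H}\circ\Phi_{H})\otimes\overline{\mu}_{H}\bigr)\circ(\delta_{H}\otimes H).$$
The identity $\Gamma_{H}^{\Phi_{H}}=m_{H}$ is exactly equation \eqref{GammaPhi=mH}, which was established inside the proof of Theorem \ref{Prop: Weak-nuHTr}. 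Its proof invokes the associativity of $\mu_{H}$, the coassociativity of $\delta_{H}$, the fact that $\Phi_{H}$ is a coalgebra morphism (condition (ii) of Definition \ref{WTPH}) to collapse $\lambda_{H}(\Phi_{H}(x_{(1)(1)}))\cdot\Phi_{H}(x_{(1)(2)})$ into $\varepsilon_{H}(x_{(1)})\eta_{H}$ via the antipode axiom \eqref{antipode}, and the (co)unit properties. Hence $G(F((H,m_{H},\Phi_{H})))=(H,m_{H},\Phi_{H})$.

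There is no real obstacle to overcome: the two non-trivial identities required, namely \eqref{mu2Htruss} and \eqref{GammaPhi=mH}, have already been carried out earlier in the paper. The content of the theorem is precisely the observation that these two formulas, which were derived for unrelated purposes (the first for a structural description of $\mu_{H}^{2}$ in a Hopf truss, the second as a technical step in verifying the Hopf truss axioms for $\overline{\mathbb{H}}$), together exhibit $F$ and $G$ as mutually inverse functors.
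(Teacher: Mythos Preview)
Your proposal is correct and follows essentially the same approach as the paper: both arguments reduce the isomorphism to the two identities \eqref{mu2Htruss} and \eqref{GammaPhi=mH}, verifying that $F\circ G$ and $G\circ F$ are the identity on objects (with morphisms trivially preserved). The only cosmetic differences are that you treat the two compositions in the opposite order and that \eqref{mu2Htruss} is actually recorded after, not before, Definition~\ref{H-truss}.
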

\begin{proof}
On the one side, consider $(H,m_{H},\Phi_{H})$ an object in ${\sf wt}\textnormal{-}{\sf Post}\textnormal{-}{\sf Hopf}^{\star}$. By \eqref{GammaPhi=mH}, we obtain that
\begin{align*}
&(G\circ F)(H,m_{H},\Phi_{H})=G\bigl(\bigl(H,\overline{H},\Phi_{H}\bigr)\bigr)=\bigl(H,\Gamma_{H}^{\Phi_{H}},\Phi_{H}\bigr)=(H,m_{H},\Phi_{H}),
\end{align*}
and therefore, $G\circ F={\sf id}_{{\sf wt}\textnormal{-}{\sf Post}\textnormal{-}{\sf Hopf}^{\star}}$.

On the other side, let $\mathbb{H}=(H_{1},H_{2},\sigma_{H})$ be an object in $\sf{HTr}^{\star}$. We have the following:
\begin{align*}
&(F\circ G)(\mathbb{H})=F\bigl(\bigl(H_{1},\Gamma_{H_{1}}^{\sigma_{H}},\sigma_{H}\bigr)\bigr)=\bigl(H_{1},\overline{H_{1}},\sigma_{H}\bigr),
\end{align*}
where, by \eqref{mu2Htruss}, $\overline{\mu_{H_{1}}}=\mu_{H}^{2}$. Therefore, we conclude that $\overline{H_{1}}=H_{2}$, and so $F\circ G={\sf id}_{\sf{HTr}^{\star}}$.
\end{proof}

\begin{Corollary}\label{cor-iso-cocwtph-cochtr}
Categories $\sf{coc}\textnormal{-}{\sf wt}\textnormal{-}{\sf Post}$-$\sf{Hopf}$ and $\sf{coc}\textnormal{-}{\sf HTr}$ are isomorphic.
\end{Corollary}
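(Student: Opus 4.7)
The plan is to derive this corollary directly from Theorem~\ref{th-iso-wtph-htr} by verifying that, in the cocommutative setting, the extra conditions \eqref{classcocommH} and \eqref{classcocomGH} that cut out the starred subcategories are automatically satisfied, and that the functors $F$ and $G$ preserve cocommutativity of the underlying coalgebra. Once this is done, the isomorphism is obtained by restriction.

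First I would show that $\sf{coc}\textnormal{-}{\sf wt}\textnormal{-}{\sf Post}\textnormal{-}{\sf Hopf}$ is a full subcategory of ${\sf wt}\textnormal{-}{\sf Post}\textnormal{-}{\sf Hopf}^{\star}$. Indeed, if $(H,m_{H},\Phi_{H})$ is a cocommutative object, then $c_{H,H}\circ\delta_{H}=\delta_{H}$, so both sides of \eqref{classcocommH} coincide trivially, meaning that $(H,m_{H},\Phi_{H})$ belongs to ${\sf wt}\textnormal{-}{\sf Post}\textnormal{-}{\sf Hopf}^{\star}$. Analogously, if $\mathbb{H}=(H_{1},H_{2},\sigma_{H})$ is an object of $\sf{coc}\textnormal{-}{\sf HTr}$, the cocommutativity of $(H,\varepsilon_{H},\delta_{H})$ immediately forces \eqref{classcocomGH}, so $\mathbb{H}$ belongs to $\sf{HTr}^{\star}$. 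In both cases fullness is clear since the morphisms in the cocommutative subcategories are the same as in the ambient ones.

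Next I would verify that the functors $F$ and $G$ of Theorem~\ref{th-iso-wtph-htr} restrict to functors between these cocommutative subcategories. This reduces to observing that the underlying coalgebra structure is preserved in both directions. For $F$, the assignment $(H,m_{H},\Phi_{H})\mapsto \overline{\mathbb{H}}=(H,\overline{H},\Phi_{H})$ defined via Theorem~\ref{nuHbar} leaves the counit and coproduct of $H$ unchanged, so cocommutativity is inherited by the Hopf truss $\overline{\mathbb{H}}$. For $G$, the assignment $\mathbb{H}\mapsto (H_{1},\Gamma_{H_{1}}^{\sigma_{H}},\sigma_{H})$ also keeps the same coalgebra structure, so cocommutativity of $\mathbb{H}$ passes to $(H_{1},\Gamma_{H_{1}}^{\sigma_{H}},\sigma_{H})$. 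On morphisms both functors act by the identity.

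Finally, the identities $G\circ F={\sf id}_{{\sf wt}\textnormal{-}{\sf Post}\textnormal{-}{\sf Hopf}^{\star}}$ and $F\circ G={\sf id}_{\sf{HTr}^{\star}}$ established in Theorem~\ref{th-iso-wtph-htr} restrict to the corresponding identities on the cocommutative full subcategories, which yields the desired isomorphism between $\sf{coc}\textnormal{-}{\sf wt}\textnormal{-}{\sf Post}\textnormal{-}{\sf Hopf}$ and $\sf{coc}\textnormal{-}{\sf HTr}$. No essentially new computation is required; the only step that needs a moment's care is checking that the cocommutativity conditions collapse \eqref{classcocommH} and \eqref{classcocomGH} to trivialities, which I do not expect to be a genuine obstacle.
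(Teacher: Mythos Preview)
Your proposal is correct and follows essentially the same approach as the paper: the paper's proof is a one-line observation that, under cocommutativity, conditions \eqref{classcocommH} and \eqref{classcocomGH} hold automatically, so the corollary follows directly from Theorem~\ref{th-iso-wtph-htr}. Your additional remarks that $F$ and $G$ preserve the underlying coalgebra structure (and hence cocommutativity) make explicit what the paper leaves implicit, but the argument is the same.
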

\begin{proof}
By previous theorem, this corollary is direct taking into account that, under cocommutativity, \eqref{classcocommH} and \eqref{classcocomGH} always hold.
\end{proof}

Having reached the fundamental outcome of this section, the following results will try to generalize some additional properties that can be found in \cite{Wang} to the monoidal context.
\begin{Lemma}\label{Phiidemp}
Let $(H,m_{H},\Phi_{H})$ be an object in ${\sf wt}\textnormal{-}\sf{Post}\textnormal{-}\sf{Hopf}$ satisfying that $\Phi_{H}\circ\eta_{H}=\eta_{H}$. Then, $\Phi_{H}$ is idempotent.
\end{Lemma}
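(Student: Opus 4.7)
The natural idea is to feed the unit into condition (iii) of Definition~\ref{WTPH} and exploit the two simplification rules available: the identity \eqref{mHprop1}, $m_H\circ(H\otimes\eta_H)=\varepsilon_H\otimes\eta_H$, and the hypothesis $\Phi_H\circ\eta_H=\eta_H$.

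Concretely, I would start from the equality
\[
\Phi_H\circ\mu_H\circ(\Phi_H\otimes m_H)\circ(\delta_H\otimes H)=\mu_H\circ(\Phi_H\otimes m_H)\circ(\delta_H\otimes\Phi_H),
\]
and precompose both sides with $H\otimes\eta_H\colon H\to H\otimes H$. On the left, after replacing $m_H\circ(H\otimes\eta_H)$ by $\varepsilon_H\otimes\eta_H$ via \eqref{mHprop1}, the factor $\eta_H$ in the second tensorand can be absorbed by $\mu_H$ using the unit axiom, and the counit $\varepsilon_H$ together with coassociativity/counit for $\delta_H$ collapses the expression to $\Phi_H\circ\Phi_H$. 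On the right, the hypothesis $\Phi_H\circ\eta_H=\eta_H$ turns $\Phi_H$ in the last slot into $\eta_H$, so the same application of \eqref{mHprop1} and the (co)unit properties reduces that side to $\Phi_H$.

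Equating the two simplifications yields $\Phi_H\circ\Phi_H=\Phi_H$, which is precisely the desired idempotency. I expect no real obstacle: the argument is a short chain of (co)unit identities once one has the right initial move, namely precomposing condition (iii) with $H\otimes\eta_H$; the only thing to watch is the order of rewriting on the left-hand side, so that the counit produced by \eqref{mHprop1} is indeed paired with the correct leg of $\delta_H$ before being contracted.
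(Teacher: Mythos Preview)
Your proposal is correct and is essentially the paper's own argument. The paper packages your (co)unit simplification as the identity \eqref{cond1WTPHA}, $\overline{\mu}_{H}\circ(H\otimes\eta_{H})=\Phi_{H}$, and then runs exactly your precomposition of condition~(iii) with $H\otimes\eta_{H}$: using \eqref{cond1WTPHA} on both sides together with the hypothesis $\Phi_{H}\circ\eta_{H}=\eta_{H}$ immediately gives $\Phi_{H}=\Phi_{H}\circ\Phi_{H}$.
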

\begin{proof}The proof follows by
\begin{align*}
\Phi_{H}={}&\overline{\mu}_{H}\circ(H\otimes\eta_{H})\ \text{\footnotesize\textnormal{(by \eqref{cond1WTPHA})}}\\
={}&\overline{\mu}_{H}\circ\left(H\otimes(\Phi_{H}\circ\eta_{H})\right)\ \text{\footnotesize\textnormal{(by $\Phi_{H}\circ\eta_{H}=\eta_{H}$)}}\\
={}&\Phi_{H}\circ\overline{\mu}_{H}\circ(H\otimes\eta_{H})\ \text{\footnotesize\textnormal{(by (iii) of Definition \ref{WTPH})}}\\
={}&\Phi_{H}\circ\Phi_{H}\ \text{\footnotesize\textnormal{(by \eqref{cond1WTPHA})}}.\tag*{\qed}
\end{align*}
\renewcommand{\qed}{}
\end{proof}

As a consequence, if $(H,m_{H},\Phi_{H})$ is an object in ${\sf t}\textnormal{-}\sf{Post}\textnormal{-}\sf{Hopf}$, then $\Phi_{H}$ is idempotent.
\begin{Lemma}
Let $(H,m_{H},\Phi_{H})$ be an object in ${\sf t}\textnormal{-}\sf{Post}\textnormal{-}\sf{Hopf}$. The equality
\begin{equation}\label{mconPHI}
m_{H}=m_{H}\circ(\Phi_{H}\otimes H).
\end{equation}
holds.
\end{Lemma}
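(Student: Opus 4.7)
The plan is to observe that $\Phi_H$ can be reconstructed from $\overline{\mu}_H$ and $\eta_H$ via~\eqref{cond1WTPHA}, and then exploit the associativity-type identity (iv) of Definition~\ref{WTPH} together with the normalization \eqref{mHprop2}, which is available because $(H,m_H,\Phi_H)$ lies in ${\sf t}\textnormal{-}{\sf Post}\textnormal{-}{\sf Hopf}$.

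Concretely, I would start from the right-hand side $m_H\circ(\Phi_H\otimes H)$ and substitute $\Phi_H = \overline{\mu}_H\circ(H\otimes\eta_H)$, rewriting it as
\[
m_H\circ(\Phi_H\otimes H)=m_H\circ(\overline{\mu}_H\otimes H)\circ(H\otimes\eta_H\otimes H).
\]
Now condition (iv) of Definition~\ref{WTPH} says precisely $m_H\circ(\overline{\mu}_H\otimes H)=m_H\circ(H\otimes m_H)$, so the expression becomes
\[
m_H\circ(H\otimes m_H)\circ(H\otimes\eta_H\otimes H)=m_H\circ(H\otimes(m_H\circ(\eta_H\otimes H))).
\]
Finally, applying \eqref{mHprop2}, the inner composite collapses to $\mathrm{id}_H$, yielding $m_H$.

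There is no real obstacle here: the proof is a short chain of substitutions, and the only delicate point is making sure that \eqref{mHprop2} is indeed at our disposal. This is why the hypothesis that $(H,m_H,\Phi_H)$ belongs to ${\sf t}\textnormal{-}{\sf Post}\textnormal{-}{\sf Hopf}$ (and not merely to ${\sf wt}\textnormal{-}{\sf Post}\textnormal{-}{\sf Hopf}$) is essential: without the unit-preservation condition (vi) of Definition~\ref{WTPH}, the identity $m_H\circ(\eta_H\otimes H)=\mathrm{id}_H$ need not hold, and the argument breaks down at the last step.
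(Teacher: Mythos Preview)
Your proof is correct and is essentially the paper's own argument run in reverse: the paper starts from $m_H$, inserts $\mathrm{id}_H = m_H\circ(\eta_H\otimes H)$ via \eqref{mHprop2}, applies (iv) of Definition~\ref{WTPH} to obtain $m_H\circ((\overline{\mu}_H\circ(H\otimes\eta_H))\otimes H)$, and then uses \eqref{cond1WTPHA} to arrive at $m_H\circ(\Phi_H\otimes H)$. The three ingredients and their roles are identical.
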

\begin{proof}The proof follows by
\begin{align*}
m_{H}={}&m_{H}\circ(H\otimes(m_{H}\circ(\eta_{H}\otimes H)))\ \text{\footnotesize\textnormal{(by \eqref{mHprop2})}}\\
={}&m_{H}\circ((\overline{\mu}_{H}\circ(H\otimes\eta_{H}))\otimes H)\ \text{\footnotesize\textnormal{(by (iv) of Definition \ref{WTPH})}}\\
={}&m_{H}\circ(\Phi_{H}\otimes H)\ \text{\footnotesize\textnormal{(by \eqref{cond1WTPHA})}}.\tag*{\qed}
\end{align*}
\renewcommand{\qed}{}
\end{proof}

Consider $(H,m_{H},\Phi_{H})$ a twisted post-Hopf algebra and define the following morphism:
\begin{equation*}
S_{H}\coloneqq ( b_{H}(K)\otimes H)\circ((\lambda_{H}\circ\Phi_{H})\otimes\beta_{H})\circ c_{H,H}\circ\delta_{H},
\end{equation*}
i.e.,
\[
\includegraphics[scale=3]{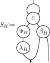}
\]

Note that, when $H$ is cocommutative, the previous morphism becomes the following:
\begin{equation}
\label{SHcocom}
S_{H}=( b_{H}(K)\otimes H)\circ((\lambda_{H}\circ\Phi_{H})\otimes\beta_{H})\circ\delta_{H}.
\end{equation}

So, next results are going to be devoted to studying under what conditions $S_{H}$ satisfies that
\begin{equation}\label{antipodeSh}
S_{H}\barast {\rm id}_{H}=\varepsilon_{H}\otimes\eta_{H}={\rm id}_{H}\barast S_{H},
\end{equation}
where $\overline{\ast}$ denotes the convolution product in $\Hom\bigl(H,\overline{H}\bigr)$.

\begin{Lemma}
Let $(H,m_{H},\Phi_{H})$ be an object in $\sf{coc}\textnormal{-}\sf{t}\textnormal{-}\sf{Post}\textnormal{-}\sf{Hopf}$. Then,
\begin{equation}\label{relation lambdaH and SH}
\lambda_{H}\circ\Phi_{H}=m_{H}\circ (H\otimes S_{H})\circ\delta_{H}.
\end{equation}
\end{Lemma}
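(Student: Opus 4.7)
The plan is to rewrite both sides using the ``dual'' description of $m_{H}$ provided by \eqref{lem1prop2} together with the cocommutative form \eqref{SHcocom} of $S_{H}$, and then exploit that $\beta_{H}$ is the convolution inverse of $\alpha_{H}$ (condition (vii) of Definition \ref{WTPH}).

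First, I would compute
\begin{align*}
m_{H}\circ (H\otimes S_{H})\circ\delta_{H}
&= (b_{H}(K)\otimes H)\circ (H\otimes\alpha_{H})\circ c_{H,H}\circ (H\otimes S_{H})\circ\delta_{H}\\
&= (b_{H}(K)\otimes H)\circ (S_{H}\otimes\alpha_{H})\circ c_{H,H}\circ\delta_{H}\\
&= (b_{H}(K)\otimes H)\circ (S_{H}\otimes\alpha_{H})\circ\delta_{H},
\end{align*}
where the first equality uses \eqref{lem1prop2}, the second uses naturality of $c$, and the last uses cocommutativity of $H$. This eliminates $m_{H}$ from the picture in favour of $\alpha_{H}$.

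Next, I would substitute the cocommutative expression \eqref{SHcocom} for $S_{H}$ and apply coassociativity of $\delta_{H}$ to regroup the three resulting instances of the coproduct, yielding
\[
(b_{H}(K)\otimes H)\circ\bigl((\lambda_{H}\circ\Phi_{H})\otimes\bigl((H^{\ast}\otimes b_{H}(K)\otimes H)\circ(\beta_{H}\otimes\alpha_{H})\circ\delta_{H}\bigr)\bigr)\circ\delta_{H}.
\]
At this point condition (vii) of Definition \ref{WTPH} applies directly: the inner composition equals $\varepsilon_{H}\otimes a_{H}(K)$, i.e., $a_{H}(K)\circ\varepsilon_{H}$ as a morphism $H\to H^{\ast}\otimes H$.

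To finish, I would absorb the counit using $(H\otimes\varepsilon_{H})\circ\delta_{H}={\rm id}_{H}$ to reduce the expression to $(b_{H}(K)\otimes H)\circ ((\lambda_{H}\circ\Phi_{H})\otimes a_{H}(K))$, and then apply the rigidity identity $(b_{H}(K)\otimes H)\circ(H\otimes a_{H}(K))={\rm id}_{H}$ to obtain $\lambda_{H}\circ\Phi_{H}$, as desired.

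The main obstacle is purely bookkeeping: after substituting $S_{H}$ there are five tensor factors and two pairings to coordinate, so one must carefully use coassociativity (and, where convenient, cocommutativity) to present the factors in the precise order in which the convolution inverse relation between $\beta_{H}$ and $\alpha_{H}$ can be recognised and invoked. Once the composition $(H^{\ast}\otimes b_{H}(K)\otimes H)\circ(\beta_{H}\otimes\alpha_{H})\circ\delta_{H}$ is isolated on a single tensor factor, the remainder of the argument is automatic from the (co)unit and duality axioms.
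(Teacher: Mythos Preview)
Your proof is correct and follows essentially the same approach as the paper's: both rewrite $m_{H}$ via \eqref{lem1prop2}, expand $S_{H}$ via \eqref{SHcocom}, reorganise using coassociativity/cocommutativity so that the block $(H^{\ast}\otimes b_{H}(K)\otimes H)\circ(\beta_{H}\otimes\alpha_{H})\circ\delta_{H}$ appears, apply condition~(vii) of Definition~\ref{WTPH}, and finish with the counit and rigidity identities. The only difference is cosmetic: you invoke cocommutativity immediately after \eqref{lem1prop2} to kill the braiding, whereas the paper carries the braidings through several naturality steps before using cocommutativity; your ordering is slightly cleaner but the argument is the same.
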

\begin{proof} By \eqref{lem1prop2} and the fact that $\beta_{H}$ is the inverse of $\alpha_{H}$ for the convolution in $\Hom(H,H^{\ast}\otimes H)$, then
	\begin{gather*}
 m_{H} \circ (H\otimes S_{H})\circ\delta_{H}\\
 \quad{}= m_{H}\circ (H\otimes ((b_{H}(K)\otimes H)\circ ((\lambda_{H}\circ\Phi_{H})\otimes\beta_{H})\circ\delta_{H}))\circ\delta_{H}\ \text{\footnotesize\textnormal{(by \eqref{SHcocom})}}\\
 \quad{}= (b_{H}(K)\otimes H)\circ (H\otimes\alpha_{H})\circ c_{H,H}\circ (H\otimes ((b_{H}(K)\otimes H)\circ ((\lambda_{H}\circ\Phi_{H})\otimes\beta_{H})\circ\delta_{H}))\\
 \qquad{}\circ\delta_{H}\ \text{\footnotesize\textnormal{(by \eqref{lem1prop2})}}\\
 \quad{}= ((b_{H}(K)\circ ((\lambda_{H}\circ \Phi_{H})\otimes H^{\ast}))\otimes (b_{H}(K)\circ c_{H^{\ast},H})\otimes H)\circ (H\otimes c_{H^{\ast},H^{\ast}}\otimes c_{H,H})\\
 \qquad{}\circ (H\otimes H^{\ast}\otimes c_{H,H^{\ast}}\otimes H)\circ (((c_{H^{\ast},H}\otimes H)\circ (H^{\ast}\otimes c_{H,H})\circ (\alpha_{H}\otimes H))\otimes \beta_{H})\\
 \qquad{}\circ (H\otimes\delta_{H})\circ\delta_{H}\ \text{\footnotesize\textnormal{(by naturality of $c$)}}\\
 \quad{}= ((b_{H}(K)\circ ((\lambda_{H}\circ\Phi_{H})\otimes H^{\ast}))\otimes H)\circ (H\otimes ((H^{\ast}\otimes b_{H}(K)\otimes H)\circ (\beta_{H}\otimes\alpha_{H})))\\
 \qquad{}\circ (H\otimes c_{H,H})\circ (c_{H,H}\otimes H)\circ (H\otimes\delta_{H})\circ\delta_{H}\ \text{\footnotesize\textnormal{(by naturality of $c$)}}\\
 \quad{}= ((b_{H}(K)\circ ((\lambda_{H}\circ\Phi_{H})\otimes H^{\ast}))\otimes H)\circ (H\otimes ((H^{\ast}\otimes b_{H}(K)\otimes H)\circ (\beta_{H}\otimes\alpha_{H})\circ\delta_{H}))\\
 \qquad{}\circ\delta_{H}\ \text{\footnotesize\textnormal{(by naturality of $c$ and cocommutativity and coassociativity of $\delta_{H}$)}}\\
 \quad{}= ((b_{H}(K)\circ ((\lambda_{H}\circ\Phi_{H})\otimes H^{\ast}))\otimes H)\circ (H\otimes (\varepsilon_{H}\otimes a_{H}(K)))\\
 \qquad{}\circ\delta_{H}\ \text{\footnotesize\textnormal{(by (vii) of Definition \ref{WTPH})}}\\
 \quad{}= \lambda_{H}\circ\Phi_{H}\ \text{\footnotesize\textnormal{(by counit property and the adjunction properties)}}.\tag*{\qed}
	\end{gather*}
	\renewcommand{\qed}{}
\end{proof}

\begin{Theorem}
If $(H,m_{H},\Phi_{H})$ is an object in $\sf{coc}\textnormal{-}\sf{t}\textnormal{-}\sf{Post}\textnormal{-}\sf{Hopf}$, then
\begin{equation}\label{convolution one side}{\rm id}_{H}\barast S_{H}=\varepsilon_{H}\otimes\eta_{H}.\end{equation}
\end{Theorem}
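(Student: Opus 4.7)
The plan is to unfold the convolution ${\rm id}_H\barast S_H=\overline{\mu}_H\circ(H\otimes S_H)\circ\delta_H$ and then reduce it to a standard antipode argument for the underlying Hopf algebra $H$ by invoking~\eqref{relation lambdaH and SH}.

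More concretely, I would first substitute the definition $\overline{\mu}_H=\mu_H\circ(\Phi_H\otimes m_H)\circ(\delta_H\otimes H)$ into ${\rm id}_H\barast S_H$, obtaining
\[
{\rm id}_H\barast S_H=\mu_H\circ(\Phi_H\otimes m_H)\circ(\delta_H\otimes S_H)\circ\delta_H.
\]
Then I would use the interchange law and coassociativity of $\delta_H$ to rewrite $(\delta_H\otimes S_H)\circ\delta_H$ as $(H\otimes H\otimes S_H)\circ(H\otimes\delta_H)\circ\delta_H$, so that the inner piece $m_H\circ(H\otimes S_H)\circ\delta_H$ is isolated and can be replaced via~\eqref{relation lambdaH and SH} by $\lambda_H\circ\Phi_H$. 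This yields
\[
{\rm id}_H\barast S_H=\mu_H\circ(\Phi_H\otimes(\lambda_H\circ\Phi_H))\circ\delta_H.
\]

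Next I would factor $\Phi_H\otimes(\lambda_H\circ\Phi_H)=(H\otimes\lambda_H)\circ(\Phi_H\otimes\Phi_H)$ and invoke condition~(ii.1) of Definition~\ref{WTPH}, i.e.~that $\Phi_H$ is a coalgebra morphism, to push $\Phi_H$ across $\delta_H$:
\[
\mu_H\circ(\Phi_H\otimes(\lambda_H\circ\Phi_H))\circ\delta_H=\mu_H\circ(H\otimes\lambda_H)\circ\delta_H\circ\Phi_H.
\]
The right-hand side is exactly $({\rm id}_H\ast\lambda_H)\circ\Phi_H$, so applying the antipode axiom~\eqref{antipode} gives $(\varepsilon_H\otimes\eta_H)\circ\Phi_H=\eta_H\circ\varepsilon_H\circ\Phi_H$, and a final use of~(ii.2) of Definition~\ref{WTPH} (which says $\varepsilon_H\circ\Phi_H=\varepsilon_H$) yields the desired $\varepsilon_H\otimes\eta_H$.

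I do not expect any real obstacle here: the serious work has already been absorbed into the previous lemma~\eqref{relation lambdaH and SH}, which in turn is where cocommutativity of $H$ and the convolution invertibility of $\alpha_H$ were genuinely used. The only points requiring care in this proof are the bookkeeping of coassociativity to isolate $m_H\circ(H\otimes S_H)\circ\delta_H$, and remembering that $\Phi_H$ being a coalgebra morphism is precisely what lets us commute it with $\delta_H$; neither step needs cocommutativity to be invoked again.
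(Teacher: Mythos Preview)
Your proposal is correct and follows essentially the same path as the paper: unfold $\overline{\mu}_H$, use coassociativity to isolate $m_H\circ(H\otimes S_H)\circ\delta_H$, replace it by $\lambda_H\circ\Phi_H$ via \eqref{relation lambdaH and SH}, then use that $\Phi_H$ is a coalgebra morphism together with the antipode axiom and (ii.2) to finish. The paper's proof is line-for-line the same argument, just written slightly more tersely.
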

\begin{proof} We have that
\begin{align*}
{\rm id}_{H}\barast S_{H}={}&\mu_{H}\circ(\Phi_{H}\otimes (m_{H}\circ(H\otimes S_{H})\circ\delta_{H}))\circ\delta_{H}\ \text{\footnotesize\textnormal{(by coassociativity of $\delta_{H}$)}}\\
={}&\mu_{H}\circ(H\otimes \lambda_{H})\circ(\Phi_{H}\otimes\Phi_{H})\circ\delta_{H}\ \text{\footnotesize\textnormal{(by \eqref{relation lambdaH and SH})}}\\
={}&({\rm id}_{H}\ast\lambda_{H})\circ\Phi_{H}\ \text{\footnotesize\textnormal{(by (ii.1) of Definition \ref{WTPH})}}\\
={}&\varepsilon_{H}\otimes\eta_{H}\ \text{\footnotesize\textnormal{(by \eqref{antipode} and (ii.2) of Definition \ref{WTPH})}}.\tag*{\qed}
\end{align*}
\renewcommand{\qed}{}
\end{proof}

\begin{Lemma}
Let $(H,m_{H},\Phi_{H})$ be an object in $\sf{coc}\textnormal{-}\sf{t}\textnormal{-}\sf{Post}\textnormal{-}\sf{Hopf}$. The morphism
\[\tilde{\alpha}_{H}\coloneqq \left( b_{H}(K)\otimes H\right)\circ(H\otimes\alpha_{H})\]
is a coalgebra morphism.
\end{Lemma}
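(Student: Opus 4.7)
The strategy is to simplify $\tilde{\alpha}_{H}$ into a composition of coalgebra morphisms. From the definition of $\tilde{\alpha}_{H}$ and equation \eqref{lem1prop1} one immediately has $\tilde{\alpha}_{H} = m_{H} \circ c_{H,H}^{-1}$. Since $H$ is cocommutative, \eqref{ccb} gives $c_{H,H}^{-1} = c_{H,H}$, and hence $\tilde{\alpha}_{H} = m_{H} \circ c_{H,H}$.

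Now $m_{H}$ is a coalgebra morphism by Definition~\ref{WTPH}(i), so it suffices to prove that the braiding $c_{H,H}\colon H \otimes H \rightarrow H \otimes H$ is a coalgebra morphism with respect to the tensor coalgebra structure on both source and target. The counit identity $\varepsilon_{H \otimes H} \circ c_{H,H} = \varepsilon_{H \otimes H}$ follows at once from naturality of $c$ together with the strictness identity $c_{K,K} = {\rm id}_{K}$. The coproduct identity
\[
\delta_{H \otimes H} \circ c_{H,H} = (c_{H,H} \otimes c_{H,H}) \circ \delta_{H \otimes H}
\]
is a classical fact in braided monoidal categories, obtained by expanding both sides via $\delta_{H \otimes H} = (H \otimes c_{H,H} \otimes H) \circ (\delta_{H} \otimes \delta_{H})$, applying naturality of $c$ to pull $c_{H,H}$ past $\delta_{H} \otimes \delta_{H}$, and then reshuffling strands using the two hexagon axioms. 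Composing the coalgebra morphisms $c_{H,H}$ and $m_{H}$ then yields the coalgebra morphism $\tilde{\alpha}_{H}$.

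The only even mildly involved step is the coproduct compatibility of $c_{H,H}$; in graphical calculus this amounts to sliding a crossing through a pair of comultiplications, which is visually immediate but algebraically requires some bookkeeping. Cocommutativity of $H$ enters only to replace $c_{H,H}^{-1}$ by $c_{H,H}$, producing a cleaner final form suitable for later use.
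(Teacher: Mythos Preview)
Your approach matches the paper's exactly: identify $\tilde{\alpha}_H = m_H \circ c_{H,H}$ via \eqref{lem1prop1} and \eqref{ccb}, then argue that both factors are coalgebra morphisms. The gap is in your justification that $c_{H,H}$ is a coalgebra morphism. In a general braided category this is \emph{false}: after using naturality of $c$, the coproduct compatibility reduces to
\[
(H\otimes c_{H,H}\otimes H)\circ c_{H\otimes H,H\otimes H}=(c_{H,H}\otimes c_{H,H})\circ(H\otimes c_{H,H}\otimes H),
\]
and once you resolve $c_{H\otimes H,H\otimes H}$ via the hexagon axioms the left-hand side carries five positive crossings while the right-hand side carries three, so they cannot be equal as braids. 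What rescues the argument is precisely \eqref{ccb}: the identity $c_{H,H}\circ c_{H,H}={\rm id}_{H\otimes H}$ cancels the two extra crossings on the left and the two sides then agree. The paper invokes \eqref{ccb} together with naturality of $c$ for exactly this step.

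Consequently your closing remark, that cocommutativity enters only to convert $c_{H,H}^{-1}$ into $c_{H,H}$, is inaccurate: \eqref{ccb} is needed a second time, and essentially, to make $c_{H,H}$ a coalgebra morphism for the tensor coalgebra structure on $H\otimes H$. With that correction your proof is complete and coincides with the paper's.
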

\begin{proof}
On the one side, by \eqref{lem1prop1} and \eqref{ccb}, $\tilde{\alpha}_{H}=m_{H}\circ c_{H,H}$. On the other side, according to~(i.1) of Definition \ref{WTPH}, \eqref{ccb} and naturality of $c$, both $m_{H}$ and $c_{H,H}$ are coalgebra morphisms. Then, $m_{H}\circ c_{H,H}$ is a coalgebra morphism and, as a result, $\tilde{\alpha}_{H}$ is a coalgebra morphism.
\end{proof}

Let us define $\tilde{\beta}_{H}\coloneqq\left( b_{H}(K)\otimes H\right)\circ(H\otimes\beta_{H}),$
where $\beta_{H}$ is the inverse of $\alpha_{H}$ for the convolution product in $\Hom(H,H^{\ast}\otimes H)$. For usual post-Hopf algebras (in other words, when $\Phi_{H}={\rm id}_{H}$) it was necessary to require $\tilde{\beta}_{H}$ to be a morphism of coalgebras to conclude that $S_{H}$ is the inverse of ${\rm id}_{H}$ for the convolution $\overline{\ast}$ \big(when $\Phi_{H}={\rm id}_{H}$ note that $\overline{H}$ is unital with unit $\eta_{H}$ and, as a~consequence, $\Hom\bigl(H,\overline{H}\bigr)$ is also unital with unit $\eta_{H}\circ\varepsilon_{H}$ for the convolution $\barast$, see~\mbox{\cite[Lemma~3.13 and Theorem~3.14]{FGR}}\big). A natural question that arises at this point is whether such a condition implies that $S_{H}$ satisfies \eqref{antipodeSh} in general, not only when $\Phi_{H}={\rm id}_{H}$. This question will be solved in Theorem \ref{iffPhi=id}.
\begin{Theorem}
Let $(H,m_{H},\Phi_{H})$ be an object in $\sf{coc}\textnormal{-}\sf{t}\textnormal{-}\sf{Post}\textnormal{-}\sf{Hopf}$. If $\tilde{\beta}_{H}$ is a coalgebra morphism, then $S_{H}$ is a coalgebra morphism.
\end{Theorem}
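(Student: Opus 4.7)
The plan is to express $S_{H}$ as a composition of three morphisms, each of which is a coalgebra morphism, and then invoke the fact that the composition of coalgebra morphisms is again a coalgebra morphism.

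First, using the interchange law $(f\otimes g)\circ(f'\otimes g')=(f\circ f')\otimes(g\circ g')$, I would rewrite the defining formula \eqref{SHcocom} for $S_{H}$ in the cocommutative setting as
\[
S_{H}=\tilde{\beta}_{H}\circ((\lambda_{H}\circ\Phi_{H})\otimes H)\circ\delta_{H},
\]
since
\[
(b_{H}(K)\otimes H)\circ(H\otimes\beta_{H})\circ((\lambda_{H}\circ\Phi_{H})\otimes H)=(b_{H}(K)\otimes H)\circ((\lambda_{H}\circ\Phi_{H})\otimes\beta_{H}).
\]

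Then I would check separately that each of the three factors is a coalgebra morphism. For $\delta_{H}$: cocommutativity of $H$ together with coassociativity (and the standard identity $\varepsilon_{H\otimes H}\circ\delta_{H}=\varepsilon_{H}$) gives that $\delta_{H}\colon H\to H\otimes H$ is a coalgebra morphism; this is a routine computation of the form $\delta_{H\otimes H}\circ\delta_{H}=(H\otimes c_{H,H}\otimes H)\circ(\delta_{H}\otimes\delta_{H})\circ\delta_{H}=(\delta_{H}\otimes\delta_{H})\circ\delta_{H}$, where the last equality uses cocommutativity. For $(\lambda_{H}\circ\Phi_{H})\otimes H$: the morphism $\Phi_{H}$ is a coalgebra morphism by condition (ii) of Definition \ref{WTPH}, and $\lambda_{H}$ is a coalgebra morphism because $H$ is cocommutative, so $\lambda_{H}\circ\Phi_{H}$ is a coalgebra morphism and hence so is its tensor product with ${\rm id}_{H}$. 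For $\tilde{\beta}_{H}$: this is precisely the standing hypothesis of the theorem.

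Since the composition of coalgebra morphisms is a coalgebra morphism, the factorisation above yields that $S_{H}$ is a coalgebra morphism. There is no genuine obstacle here: once the factorisation of $S_{H}$ through $\tilde{\beta}_{H}$ is spotted, the rest is a direct assembly of facts already established in Section \ref{s2}. The only delicate point is checking the interchange-law rewriting that identifies $(H\otimes\beta_{H})\circ((\lambda_{H}\circ\Phi_{H})\otimes H)$ with $(\lambda_{H}\circ\Phi_{H})\otimes\beta_{H}$, which is immediate from bifunctoriality of $\otimes$.
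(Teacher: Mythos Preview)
Your argument is correct and rests on exactly the same factorisation the paper uses, namely $S_{H}=\tilde{\beta}_{H}\circ((\lambda_{H}\circ\Phi_{H})\otimes H)\circ\delta_{H}$. The difference is only one of packaging: the paper expands $\varepsilon_{H}\circ S_{H}$ and $\delta_{H}\circ S_{H}$ step by step (invoking \eqref{a-antip2}, the coalgebra-morphism condition for $\Phi_{H}$, cocommutativity and naturality of $c$ at each stage), whereas you organise the same ingredients into the single observation that $\delta_{H}$, $(\lambda_{H}\circ\Phi_{H})\otimes H$ and $\tilde{\beta}_{H}$ are each coalgebra morphisms and then appeal to closure under composition. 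Your route is slightly more conceptual and shorter; the paper's explicit chain makes each use of cocommutativity and of \eqref{a-antip2} visible, which fits the style of the surrounding computations. No step in your outline is problematic in the braided setting: cocommutativity gives $c_{H,H}\circ\delta_{H}=\delta_{H}$, hence $(H\otimes c_{H,H}\otimes H)\circ(\delta_{H}\otimes\delta_{H})\circ\delta_{H}=(\delta_{H}\otimes\delta_{H})\circ\delta_{H}$ via $(H\otimes(c_{H,H}\circ\delta_{H})\otimes H)$, so $\delta_{H}$ is indeed a coalgebra morphism, and $\lambda_{H}$ is a coalgebra morphism by \eqref{a-antip2} together with cocommutativity.
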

\begin{proof}
First of all, we will see that $\varepsilon_{H}\circ S_{H}=\varepsilon_{H}$,
\begin{align*}
\varepsilon_{H}\circ S_{H}
={}&\varepsilon_{H}\circ\tilde{\beta}_{H}\circ((\lambda_{H}\circ\Phi_{H})\otimes H)\circ\delta_{H}\ \text{\footnotesize\textnormal{\big(by \eqref{SHcocom} and definition of $\tilde{\beta}_{H}$\big)}}\\
={}&((\varepsilon_{H}\circ\lambda_{H}\circ\Phi_{H})\otimes\varepsilon_{H})\circ\delta_{H}\ \text{\footnotesize\textnormal{\big(by $\tilde{\beta}_{H}$ coalgebra morphism\big)}}\\
={}&\varepsilon_{H}\ \text{\footnotesize\textnormal{(by \eqref{u-antip2}, (ii.2) of Definition \ref{WTPH} and counit property)}}.
\end{align*}

On the other hand, we have that
\begin{gather*}
 \delta_{H}\circ S_{H}\\
 = \delta_{H}\circ\tilde{\beta}_{H}\circ((\lambda_{H}\circ\Phi_{H})\otimes H)\circ\delta_{H}\ \text{\footnotesize\textnormal{\big(by \eqref{SHcocom} and definition of $\tilde{\beta}_{H}$\big)}}\\
 = \bigl(\tilde{\beta}_{H}\otimes \tilde{\beta}_{H}\bigr)\circ(H\otimes c_{H,H}\otimes H)\circ((\delta_{H}\circ\lambda_{H}\circ\Phi_{H})\otimes\delta_{H})\circ\delta_{H}\ \text{\footnotesize\textnormal{\big(by $\tilde{\beta}_{H}$ coalgebra morphism\big)}}\\
 = \bigl(\tilde{\beta}_{H}\otimes \tilde{\beta}_{H}\bigr)\circ(H\otimes c_{H,H}\otimes H)\circ((c_{H,H}\circ(\lambda_{H}\otimes\lambda_{H})\circ(\Phi_{H}\otimes\Phi_{H})\circ\delta_{H} )\otimes\delta_{H})\circ\delta_{H}\\
 \quad{} \text{\footnotesize\textnormal{(by \eqref{a-antip2} and (ii.1) of Definition \ref{WTPH})}}\\
 = \bigl(\bigl(\tilde{\beta}_{H}\circ((\lambda_{H}\circ\Phi_{H})\otimes H)\bigr)\otimes\bigl(\tilde{\beta}_{H}\circ((\lambda_{H}\circ\Phi_{H})\otimes H)\bigr)\bigr)\\
 \quad{} \circ(((H\otimes c_{H,H})\circ(c_{H,H}\otimes H)\circ(H\otimes\delta_{H})\circ\delta_{H})\otimes H)\circ\delta_{H}\\
 \quad{} \text{\footnotesize\textnormal{(by naturality of $c$ and coassociativity of $\delta_{H}$)}}\\
 = \bigl(S_{H}\otimes\bigl(\tilde{\beta}_{H}\circ((\lambda_{H}\circ\Phi_{H})\otimes H)\bigr)\bigr)\circ((c_{H,H}\circ\delta_{H})\otimes H)\circ\delta_{H}\\
 \quad{} \text{\footnotesize\textnormal{(by naturality of $c$, definition of $\tilde{\beta}_{H}$ and \eqref{SHcocom})}}\\
 = (S_{H}\otimes S_{H})\circ\delta_{H}\ \text{\footnotesize\textnormal{\big(by cocommutativity and coassociativity of $H$, definition of $\tilde{\beta}_{H}$ and \eqref{SHcocom}\big)}}.\tag*{\qed}
\end{gather*}
\renewcommand{\qed}{}
\end{proof}

\begin{Corollary}
Let $(H,m_{H},\Phi_{H})$ be an object in $\sf{coc}\textnormal{-}\sf{t}\textnormal{-}\sf{Post}\textnormal{-}\sf{Hopf}$. If $\tilde{\beta}_{H}$ is a coalgebra morphism, then
\begin{equation}\label{SH2=PHI}
S_{H}\circ S_{H}=\Phi_{H}.
\end{equation}
\end{Corollary}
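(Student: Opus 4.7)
The strategy is to imitate the classical ``an antipode is unique'' argument, taking advantage of the fact that although $\overline{H}$ is only non-unital, the morphism $\eta_{H}\circ\varepsilon_{H}$ behaves as a left unit for the convolution $\barast$ in $\Hom\bigl(H,\overline{H}\bigr)$ thanks to \eqref{muetaid}, and as a right ``twisted unit'' via \eqref{cond1WTPHA} in the sense that $f\barast(\eta_{H}\circ\varepsilon_{H})=\Phi_{H}\circ f$ for every $f\colon H\to H$. In particular, ${\rm id}_{H}\barast(\eta_{H}\circ\varepsilon_{H})=\Phi_{H}$.

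First, I would postcompose \eqref{convolution one side} with $S_{H}$ to obtain the identity
\begin{equation*}
S_{H}\barast(S_{H}\circ S_{H})=\eta_{H}\circ\varepsilon_{H}.
\end{equation*}
To do this, note that by the hypothesis that $\tilde{\beta}_{H}$ is a coalgebra morphism the previous theorem guarantees that $S_{H}$ is a coalgebra morphism. Using $\delta_{H}\circ S_{H}=(S_{H}\otimes S_{H})\circ\delta_{H}$ yields
\begin{equation*}
({\rm id}_{H}\barast S_{H})\circ S_{H}=\overline{\mu}_{H}\circ(H\otimes S_{H})\circ(S_{H}\otimes S_{H})\circ\delta_{H}=S_{H}\barast(S_{H}\circ S_{H}),
\end{equation*}
while the left-hand side equals $(\eta_{H}\circ\varepsilon_{H})\circ S_{H}=\eta_{H}\circ\varepsilon_{H}$ by the identity $\varepsilon_{H}\circ S_{H}=\varepsilon_{H}$ proved in the previous theorem.

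Next, I would convolve this identity on the left with ${\rm id}_{H}$ and evaluate the result in two ways using associativity of $\barast$ (which holds because $\overline{\mu}_{H}$ is associative and $\delta_{H}$ is coassociative). On the one hand,
\begin{equation*}
{\rm id}_{H}\barast(S_{H}\barast(S_{H}\circ S_{H}))={\rm id}_{H}\barast(\eta_{H}\circ\varepsilon_{H})=\Phi_{H},
\end{equation*}
where the last equality follows from \eqref{cond1WTPHA}. On the other hand, associativity together with \eqref{convolution one side} and then \eqref{muetaid} give
\begin{equation*}
({\rm id}_{H}\barast S_{H})\barast(S_{H}\circ S_{H})=(\eta_{H}\circ\varepsilon_{H})\barast(S_{H}\circ S_{H})=S_{H}\circ S_{H}.
\end{equation*}
Equating the two expressions yields \eqref{SH2=PHI}.

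There is no real obstacle here: the argument is entirely formal once one identifies the correct ``one-sided units'' for $\barast$. The only subtle point is bookkeeping around the fact that $\overline{H}$ is non-unital, so ${\rm id}_{H}$ is a left but not a right convolution identity; this is precisely what forces the outcome $\Phi_{H}$ rather than ${\rm id}_{H}$ on the right-hand side of \eqref{SH2=PHI}, in agreement with Lemma \ref{Phiidemp} which says $\Phi_{H}$ is idempotent.
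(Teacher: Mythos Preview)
Your argument is correct and is essentially the same as the paper's: both compute ${\rm id}_{H}\barast S_{H}\barast(S_{H}\circ S_{H})$ in two ways using \eqref{convolution one side}, \eqref{muetaid}, \eqref{cond1WTPHA}, associativity of $\barast$, and the fact that $S_{H}$ is a coalgebra morphism. The paper presents this as a single chain of equalities starting from $S_{H}\circ S_{H}$, while you frame it as a sandwich argument, but the logical content is identical.
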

\begin{proof} The equality follows by
\begin{align*}
S_{H}\circ S_{H}
={}&\overline{\mu}_{H}\circ((\eta_{H}\circ\varepsilon_{H})\otimes(S_{H}\circ S_{H}))\circ\delta_{H}\ \text{\footnotesize\textnormal{(by counit property and \eqref{muetaid})}}\\
={}&\overline{\mu}_{H}\circ((\overline{\mu}_{H}\circ(H\otimes S_{H})\circ\delta_{H})\otimes(S_{H}\circ S_{H}))\circ\delta_{H}\ \text{\footnotesize\textnormal{(by \eqref{convolution one side})}}\\
={}&\overline{\mu}_{H}\circ(H\otimes(\overline{\mu}_{H}\circ(S_{H}\otimes(S_{H}\circ S_{H}))\circ\delta_{H}))\circ\delta_{H}\\
&{}\text{\footnotesize\textnormal{(by associativity of $\overline{\mu}_{H}$ and coassociativity of $\delta_{H}$)}}\\
={}&\overline{\mu}_{H}\circ(H\otimes(({\rm id}_{H}\barast S_{H})\circ S_{H}))\circ\delta_{H}\ \text{\footnotesize\textnormal{(by the condition of coalgebra morphism for $S_{H}$)}}\\
={}&\Phi_{H}\ \text{\footnotesize\textnormal{(by \eqref{convolution one side}, $S_{H}$ coalgebra morphism, counit property and $\eqref{cond1WTPHA}$)}}.\tag*{\qed}
\end{align*}
\renewcommand{\qed}{}
\end{proof}

Note that, under the conditions of the previous corollary, thanks to \eqref{SH2=PHI} and the idempotent character of $\Phi_{H}$, we can obtain that
\begin{align}
\Phi_{H}\circ S_{H}\circ S_{H}\circ \Phi_{H}={}&\Phi_{H}\circ\Phi_{H}\circ\Phi_{H}\ \text{\footnotesize\textnormal{(by \eqref{SH2=PHI})}}\nonumber\\
={}&\Phi_{H}\ \text{\footnotesize\textnormal{(by the idempotent character of $\Phi_{H}$)}}.\label{impliSH2=PHI}
\end{align}

Although it may seem that $S_{H}$ is going to satisfy \eqref{antipodeSh} for the convolution in $\Hom\bigl(H,\overline{H}\bigr)$, it doesn't always happen. In fact, we will see in the following result that $S_{H}\barast {\rm id}_{H}=\varepsilon_{H}\otimes\eta_{H}$ if and only if $\Phi_{H}={\rm id}_{H}$, that is to say, taking into account Example \ref{ExampleTwPHA}, $S_{H}\barast {\rm id}_{H}=\varepsilon_{H}\otimes\eta_{H}$ if and only if $(H,m_{H})$ is a post-Hopf algebra.
\begin{Theorem}\label{iffPhi=id}
Let $(H,m_{H},\Phi_{H})$ be an object in $\sf{coc}\textnormal{-}\sf{t}\textnormal{-}\sf{Post}\textnormal{-}\sf{Hopf}$ such that $\tilde{\beta}_{H}$ is a coalgebra morphism.
The following facts are equivalent:
\begin{itemize}\itemsep=0pt
\item[$(i)$]$\Phi_{H}={\rm id}_{H}$.
\item[$(ii)$] $S_{H}\barast {\rm id}_{H}=\varepsilon_{H}\otimes\eta_{H}.$
\end{itemize}
\end{Theorem}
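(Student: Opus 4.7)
My plan is to prove the two implications separately, and the hard direction $(ii)\Rightarrow (i)$ will rest on a short associativity argument together with two elementary observations on how $\varepsilon_{H}\otimes\eta_{H}$ interacts with the convolution $\barast$. Specifically, \eqref{muetaid} immediately yields $(\varepsilon_{H}\otimes\eta_{H})\barast f=\overline{\mu}_{H}\circ(\eta_{H}\otimes f)=f$ for every $f\in \Hom\bigl(H,\overline{H}\bigr)$, so $\varepsilon_{H}\otimes\eta_{H}$ behaves as a left unit for $\barast$; while \eqref{cond1WTPHA} gives $f\barast (\varepsilon_{H}\otimes\eta_{H})=\overline{\mu}_{H}\circ(f\otimes\eta_{H})=\Phi_{H}\circ f$, i.e., on the right side it fails to be a unit precisely by the factor $\Phi_{H}$. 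This asymmetry, combined with the one-sided convolution identity \eqref{convolution one side}, will pin down $\Phi_{H}$.

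To conclude $(ii)\Rightarrow (i)$, I would use that $\barast$ is associative (a direct consequence of the associativity of $\overline{\mu}_{H}$ and the coassociativity of $\delta_{H}$) and chain the two observations above together with \eqref{convolution one side}:
\[{\rm id}_{H}=(\varepsilon_{H}\otimes\eta_{H})\barast {\rm id}_{H}=({\rm id}_{H}\barast S_{H})\barast {\rm id}_{H}={\rm id}_{H}\barast(S_{H}\barast {\rm id}_{H}).\]
Assuming $(ii)$, the innermost factor on the right equals $\varepsilon_{H}\otimes\eta_{H}$, so the whole right-hand side collapses to ${\rm id}_{H}\barast(\varepsilon_{H}\otimes\eta_{H})=\Phi_{H}$, and comparing the two endpoints of the chain forces $\Phi_{H}={\rm id}_{H}$.

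For the reverse implication $(i)\Rightarrow (ii)$, when $\Phi_{H}={\rm id}_{H}$ the triple $(H,m_{H},\Phi_{H})$ is nothing but a cocommutative post-Hopf algebra in the usual sense (see Example \ref{ExampleTwPHA}); in particular \eqref{cond1WTPHA} and \eqref{muetaid} together say that $\eta_{H}$ is a two-sided unit for $\overline{\mu}_{H}$, so $\overline{H}$ becomes a genuine bialgebra. Under the standing hypothesis that $\tilde{\beta}_{H}$ is a coalgebra morphism, \cite[Theorem 3.14]{FGR} guarantees that $S_{H}$ is an antipode for $\overline{H}$, which in particular yields $S_{H}\barast {\rm id}_{H}=\varepsilon_{H}\otimes\eta_{H}$, as required. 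I do not foresee any serious technical obstacle: the only delicate point is recognising that $\varepsilon_{H}\otimes\eta_{H}$ is not a genuine $\barast$-unit but is corrected on one side by $\Phi_{H}$, and once this has been isolated the whole argument collapses into a one-line associativity computation.
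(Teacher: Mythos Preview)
Your proposal is correct and follows essentially the same route as the paper. For $(ii)\Rightarrow(i)$ the paper performs exactly your associativity chain $\Phi_{H}={\rm id}_{H}\barast(S_{H}\barast {\rm id}_{H})=({\rm id}_{H}\barast S_{H})\barast {\rm id}_{H}=(\varepsilon_{H}\otimes\eta_{H})\barast {\rm id}_{H}={\rm id}_{H}$, only written out at the level of compositions via \eqref{cond1WTPHA}, \eqref{muetaid} and \eqref{convolution one side}; your preliminary isolation of the ``left unit versus right $\Phi_{H}$-twisted unit'' behaviour of $\varepsilon_{H}\otimes\eta_{H}$ is a nice way to package that computation. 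For $(i)\Rightarrow(ii)$ the paper likewise defers to \cite{FGR} (it cites Lemma~3.13 rather than Theorem~3.14, but the content invoked is the same).
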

\begin{proof}
At first we will suppose that $\Phi_{H}={\rm id}_{H}$. To prove (ii) it is enough to follow the same arguments as in \cite[Lemma 3.13]{FGR} for post-Hopf algebras.

Assume now that $S_{H}\barast {\rm id}_{H}=\varepsilon_{H}\otimes\eta_{H}$. This implies that
\begin{align}
\overline{\mu}_{H}\circ(H\otimes (S_{H}\barast {\rm id}_{H}))\circ\delta_{H}
={}&\overline{\mu}_{H}\circ(H\otimes (\eta_{H}\circ\varepsilon_{H}))\circ\delta_{H}\nonumber\\
={}&\Phi_{H}\ \text{\footnotesize\textnormal{(by \eqref{cond1WTPHA})}}.\label{teorema1}
\end{align}

So we conclude the following:
\begin{align*}
\Phi_{H}={}&\overline{\mu}_{H}\circ(H\otimes (S_{H}\barast {\rm id}_{H}))\circ\delta_{H}\ \text{\footnotesize\textnormal{(by \eqref{teorema1})}}\\
={}&\overline{\mu}_{H}\circ(({\rm id}_{H}\barast S_{H})\otimes H)\circ\delta_{H}\ \text{\footnotesize\textnormal{(by coassociativity of $\delta_{H}$ and associativity of $\overline{\mu}_{H}$)}}\\
={}&\overline{\mu}_{H}\circ((\eta_{H}\circ\varepsilon_{H})\otimes H)\circ\delta_{H}\ \text{\footnotesize\textnormal{(by \eqref{convolution one side})}}\\
={}&{\rm id}_{H}\ \text{\footnotesize\textnormal{(by counit property and \eqref{muetaid})}}.\tag*{\qed}
\end{align*}
\renewcommand{\qed}{}
\end{proof}

So, is it possible to get a Hopf algebra structure from a twisted post-Hopf algebra? To solve this question we are going to assume that our base category $\sf{C}$ admits split idempotents, that is to say, if $q_{X}\colon X\rightarrow X$ is an idempotent morphism in {\sf C}, then there exists an object $I(q_{X})\in\sf{C}$, an epimorphism $p_{X}\colon X\rightarrow I(q_{X})$ and a monomorphism $i_{X}\colon I(q_{X})\rightarrow X$ such that~${i_{X}\circ p_{X}=q_{X}}$ and~${p_{X}\circ i_{X}={\rm id}_{I(q_{X})}}$. The categories satisfying this property constitute a broad class that includes, among others, the categories with epi-monic decomposition for morphisms and categories with equalizers or coequalizers.

Then, when $(H,m_{H},\Phi_{H})$ is an object in ${\sf wt}\textnormal{-}\sf{Post}\textnormal{-}\sf{Hopf}$ satisfying that $\Phi_{H}\circ\eta_{H}=\eta_{H}$, we have proved in Lemma \ref{Phiidemp} that $\Phi_{H}$ is idempotent, therefore, there exists an object $I(\Phi_{H})\in\sf{C}$, an epimorphism $p_{H}\colon H\rightarrow I(\Phi_{H})$ and a monomorphism $i_{H}\colon I(\Phi_{H})\rightarrow H$ verifying that
\begin{equation}\label{iconp}
i_{H}\circ p_{H}=\Phi_{H}
\end{equation}
and
\begin{equation}\label{pconi}
p_{H}\circ i_{H}={\rm id}_{I(\Phi_{H})}.
\end{equation}

Note that \eqref{iconp} and \eqref{pconi} imply that
\begin{equation}\label{Phiconi}
\Phi_{H}\circ i_{H}=i_{H},\qquad p_{H}\circ \Phi_{H}=p_{H}.
\end{equation}
\begin{Theorem}\label{bialgIphi}
If $(H,m_{H},\Phi_{H})$ is an object in ${\sf t}\textnormal{-}\sf{Post}\textnormal{-}\sf{Hopf}$ satisfying \eqref{classcocommH}, then \[I(\Phi_{H})=\bigl(I(\Phi_{H}),\eta_{I(\Phi_{H})},\mu_{I(\Phi_{H})},\varepsilon_{I(\Phi_{H})},\delta_{I(\Phi_{H})}\bigr)\] is a bialgebra in {\sf C} with
\begin{alignat*}{3}
&\mu_{I(\Phi_{H})}\coloneqq p_{H}\circ\overline{\mu}_{H}\circ (i_{H}\otimes i_{H}),\qquad&&
\eta_{I(\Phi_{H})}\coloneqq p_{H}\circ \eta_{H},\\
&\delta_{I(\Phi_{H})}\coloneqq (p_{H}\otimes p_{H})\circ\delta_{H}\circ i_{H},\qquad&&
\varepsilon_{I(\Phi_{H})}\coloneqq \varepsilon_{H}\circ i_{H}.&
\end{alignat*}
\end{Theorem}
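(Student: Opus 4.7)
The plan is to transport the non-unital bialgebra structure of $\overline{H}$ along the splitting $\Phi_{H}=i_{H}\circ p_{H}$, using the twisted-post-Hopf axioms to show that the relevant $\Phi_{H}$'s which appear can be absorbed. Before verifying the bialgebra axioms, I would establish three auxiliary identities. First, $\Phi_{H}$ is a coalgebra morphism (axiom (ii) of Definition \ref{WTPH}) combined with \eqref{iconp} yields $\delta_{H}\circ i_{H}=(i_{H}\otimes i_{H})\circ\delta_{I(\Phi_{H})}$ and $\varepsilon_{H}\circ i_{H}=\varepsilon_{I(\Phi_{H})}$. Second, axiom (iii) of Definition \ref{WTPH} rewritten in terms of $\overline{\mu}_{H}$ gives
\[
\Phi_{H}\circ\overline{\mu}_{H}=\overline{\mu}_{H}\circ(H\otimes\Phi_{H}).
\]
Third, using that $\Phi_{H}$ is a coalgebra morphism, idempotent (Lemma \ref{Phiidemp}), and satisfies \eqref{mconPHI}, a short computation shows
\[
\overline{\mu}_{H}\circ(\Phi_{H}\otimes H)=\overline{\mu}_{H}.
\]

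With these in hand, associativity of $\mu_{I(\Phi_{H})}$ reduces to associativity of $\overline{\mu}_{H}$: expanding $\mu_{I(\Phi_{H})}\circ(\mu_{I(\Phi_{H})}\otimes I(\Phi_{H}))$, the internal factor $i_{H}\circ p_{H}=\Phi_{H}$ between two copies of $\overline{\mu}_{H}$ is absorbed on the left by $\overline{\mu}_{H}\circ(\Phi_{H}\otimes H)=\overline{\mu}_{H}$; symmetrically, $\mu_{I(\Phi_{H})}\circ(I(\Phi_{H})\otimes\mu_{I(\Phi_{H})})$ produces an internal $\Phi_{H}$ on the second slot which, by $\Phi_{H}\circ\overline{\mu}_{H}=\overline{\mu}_{H}\circ(H\otimes\Phi_{H})$, gets pulled out and then killed by the outer $p_{H}$ via \eqref{Phiconi}. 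The unit axioms for $\eta_{I(\Phi_{H})}$ follow immediately from $\Phi_{H}\circ\eta_{H}=\eta_{H}$ together with \eqref{cond1WTPHA} and \eqref{muetaid}: the $\Phi_{H}$ introduced by $i_{H}\circ p_{H}$ is trivialized on $\eta_{H}$, and one ends up with $p_{H}\circ i_{H}={\rm id}_{I(\Phi_{H})}$.

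For the coalgebra axioms, coassociativity is straightforward once one observes, using $p_{H}\circ\Phi_{H}=p_{H}$ and the coalgebra morphism property of $\Phi_{H}$, that $\delta_{I(\Phi_{H})}\circ p_{H}=(p_{H}\otimes p_{H})\circ\delta_{H}$; this together with coassociativity of $\delta_{H}$ settles the identity. The counit axioms follow from $\varepsilon_{H}\circ\Phi_{H}=\varepsilon_{H}$ (axiom (ii.2)) and the counit property of $\varepsilon_{H}$.

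The bialgebra compatibility, namely $\delta_{I(\Phi_{H})}\circ\mu_{I(\Phi_{H})}=(\mu_{I(\Phi_{H})}\otimes\mu_{I(\Phi_{H})})\circ(I(\Phi_{H})\otimes c_{I(\Phi_{H}),I(\Phi_{H})}\otimes I(\Phi_{H}))\circ(\delta_{I(\Phi_{H})}\otimes\delta_{I(\Phi_{H})})$, follows by inserting $\delta_{H}\circ i_{H}\circ p_{H}=(\Phi_{H}\otimes\Phi_{H})\circ\delta_{H}$ on the left side and again using $p_{H}\circ\Phi_{H}=p_{H}$ to strip the $\Phi_{H}$'s, thereby reducing the statement to the fact that $\overline{\mu}_{H}$ is a coalgebra morphism (which is part of Theorem \ref{nuHbar}) together with naturality of the braiding applied to $i_{H}$. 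The main obstacle, conceptually, is checking that the identities $\overline{\mu}_{H}\circ(\Phi_{H}\otimes H)=\overline{\mu}_{H}$ and $\Phi_{H}\circ\overline{\mu}_{H}=\overline{\mu}_{H}\circ(H\otimes\Phi_{H})$ really do let the cocycle propagate through every occurrence of $i_{H}\circ p_{H}$; once these are in hand the verification is a bookkeeping exercise in replacing $i_{H}\circ p_{H}$ by $\Phi_{H}$ and then discarding it against either $\overline{\mu}_{H}$ or $p_{H}$.
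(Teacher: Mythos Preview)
Your proposal is correct and follows essentially the same strategy as the paper: transport the non-unital bialgebra structure of $\overline{H}$ along the splitting $\Phi_{H}=i_{H}\circ p_{H}$, using the twisted-post-Hopf axioms to absorb every intermediate $\Phi_{H}$. The only organizational difference is that you isolate $\overline{\mu}_{H}\circ(\Phi_{H}\otimes H)=\overline{\mu}_{H}$ as a preliminary identity (this is the paper's equation \eqref{barmuPhi}, which it actually establishes later, in the proof of Corollary~\ref{halgIphi}); the paper instead handles the associativity step directly via axiom~(iii) of Definition~\ref{WTPH} together with $\Phi_{H}\circ i_{H}=i_{H}$, which does not require invoking \eqref{mconPHI}. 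Both routes are valid under the stated hypotheses.
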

\begin{proof}Let us start proving that $\bigl(I(\Phi_{H}),\eta_{I(\Phi_{H})},\mu_{I(\Phi_{H})}\bigr)$ is an algebra in $\sf{C}$. The unit property follows by
\begin{align*}
\mu_{I(\Phi_{H})}\circ\bigl(\eta_{I(\Phi_{H})}\otimes I(\Phi_{H})\bigr)
={}&p_{H}\circ\overline{\mu}_{H}\circ((\Phi_{H}\circ \eta_{H})\otimes i_{H})\ \text{\footnotesize\textnormal{(by \eqref{iconp})}}\\
={}&p_{H}\circ\overline{\mu}_{H}\circ(\eta_{H}\otimes i_{H})\ \text{\footnotesize\textnormal{(by (vi) of Definition \ref{WTPH})}}\\
={}&p_{H}\circ i_{H}\ \text{\footnotesize\textnormal{(by \eqref{muetaid})}}\\
={}&{\rm id}_{I(\Phi_{H})}\ \text{\footnotesize\textnormal{(by \eqref{pconi})}},
\end{align*}
and, on the other hand,
\begin{align*}
\mu_{I(\Phi_{H})}\circ\bigl(I(\Phi_{H})\otimes \eta_{I(\Phi_{H})}\bigr)
={}&p_{H}\circ\overline{\mu}_{H}\circ(i_{H}\otimes(\Phi_{H}\circ\eta_{H}))\ \text{\footnotesize\textnormal{(by \eqref{iconp})}}\\
={}&p_{H}\circ\overline{\mu}_{H}\circ(i_{H}\otimes\eta_{H})\ \text{\footnotesize\textnormal{(by (vi) of Definition \ref{WTPH})}}\\
={}&p_{H}\circ \Phi_{H}\circ i_{H}\ \text{\footnotesize\textnormal{(by \eqref{cond1WTPHA})}}\\
={}&(p_{H}\circ i_{H})\circ(p_{H}\circ i_{H})\ \text{\footnotesize\textnormal{(by \eqref{iconp})}}\\
={}&{\rm id}_{I(\Phi_{H})}\ \text{\footnotesize\textnormal{(by \eqref{pconi})}}.
\end{align*}

Now, let us see that $\mu_{I(\Phi_{H})}$ is associative,
\begin{gather*}
\mu_{I(\Phi_{H})}\circ\bigl(\mu_{I(\Phi_{H})}\otimes I(\Phi_{H})\bigr)\\
\qquad{}= p_{H}\circ\overline{\mu}_{H}\circ((\Phi_{H}\circ\overline{\mu}_{H}\circ(i_{H}\otimes i_{H}))\otimes i_{H})\ \text{\footnotesize\textnormal{(by \eqref{iconp})}}\\
\qquad{}= p_{H}\circ\overline{\mu}_{H}\circ((\overline{\mu}_{H}\circ(i_{H}\otimes i_{H}))\otimes i_{H})\ \text{\footnotesize\textnormal{(by (iii) of Definition \ref{WTPH} and \eqref{Phiconi})}}\\
\qquad{}= p_{H}\circ\overline{\mu}_{H}\circ(i_{H}\otimes(\overline{\mu}_{H}\circ(i_{H}\otimes i_{H})))\ \text{\footnotesize\textnormal{(by associativity of $\overline{\mu}_{H}$)}}\\
\qquad{}= p_{H}\circ\overline{\mu}_{H}\circ(i_{H}\otimes(\overline{\mu}_{H}\circ(i_{H}\otimes (\Phi_{H}\circ i_{H}))))\ \text{\footnotesize\textnormal{(by \eqref{Phiconi})}}\\
\qquad{}= p_{H}\circ\overline{\mu}_{H}\circ(i_{H}\otimes(\Phi_{H}\circ \overline{\mu}_{H}\circ(i_{H}\otimes i_{H})))\ \text{\footnotesize\textnormal{(by (iii) of Definition \ref{WTPH})}}\\
\qquad{}= \mu_{I(\Phi_{H})}\circ\bigl(I(\Phi_{H})\otimes\mu_{I(\Phi_{H})}\bigr)\ \text{\footnotesize\textnormal{(by \eqref{iconp} and definition of $\mu_{I(\Phi_{H})}$)}}.
\end{gather*}

In order to prove that $\bigl(I(\Phi_{H}),\varepsilon_{I(\Phi_{H})},\delta_{I(\Phi_{H})}\bigr)$ is a coalgebra in $\sf{C}$, note that the counit property is straightforward thanks to (ii.2) of Definition \ref{WTPH}. So, we will only detail that $\delta_{I(\Phi_{H})}$ is coassociative. Indeed,
\begin{gather*}
 \bigl(\delta_{I(\Phi_{H})}\otimes I(\Phi_{H})\bigr)\circ \delta_{I(\Phi_{H})}\\
 \qquad{}= (((p_{H}\otimes p_{H})\circ\delta_{H}\circ\Phi_{H})\otimes p_{H})\circ\delta_{H}\circ i_{H}\ \text{\footnotesize\textnormal{(by \eqref{iconp})}}\\
 \qquad{}= ((p_{H}\circ \Phi_{H})\otimes(p_{H}\circ \Phi_{H})\otimes p_{H})\circ(\delta_{H}\otimes H)\circ\delta_{H}\circ i_{H}\ \text{\footnotesize\textnormal{(by (ii.1) of Definition \ref{WTPH})}}\\
 \qquad{}= ((p_{H}\circ\Phi_{H})\otimes (p_{H}\circ \Phi_{H})\otimes p_{H})\circ(H\otimes \delta_{H})\circ\delta_{H}\circ i_{H}\ \text{\footnotesize\textnormal{(by coassociativity of $\delta_{H}$)}}\\
 \qquad{}= (p_{H}\otimes(p_{H}\circ\Phi_{H})\otimes(p_{H}\circ \Phi_{H}))\circ(H\otimes\delta_{H})\circ\delta_{H}\circ i_{H}\ \text{\footnotesize\textnormal{(by \eqref{Phiconi})}}\\
 \qquad{}= (p_{H}\otimes((p_{H}\otimes p_{H})\circ\delta_{H}\circ\Phi_{H}))\circ\delta_{H}\circ i_{H}\ \text{\footnotesize\textnormal{(by (ii.1) of Definition \ref{WTPH})}}\\
 \qquad{}= \bigl(I(\Phi_{H})\otimes\delta_{I(\Phi_{H})}\bigr)\circ\delta_{I(\Phi_{H})}\ \text{\footnotesize\textnormal{(by \eqref{iconp})}}.
\end{gather*}

Moreover, note that $\mu_{I(\Phi_{H})}$ and $\eta_{I(\Phi_{H})}$ are coalgebra morphisms. On the one hand, it is straightforward to compute that
\begin{gather*}
\begin{split}
& \varepsilon_{I(\Phi_{H})}\circ\mu_{I(\Phi_{H})}=\mu_{K}\circ(\varepsilon_{I(\Phi_{H})}\otimes\varepsilon_{I(\Phi_{H})})\overset{(\star)}{=}\varepsilon_{I(\Phi_{H})} \otimes\varepsilon_{I(\Phi_{H})},\\
& \delta_{I(\Phi_{H})}\circ\eta_{I(\Phi_{H})}=(\eta_{I(\Phi_{H})}\otimes \eta_{I(\Phi_{H})})\circ\delta_{K}\overset{(\star)}{=}\eta_{I(\Phi_{H})}\otimes \eta_{I(\Phi_{H})},
\end{split}
\end{gather*}
where the equalities indicated with ($\star$) follow by the fact that $\delta_{K}=\mu_{K}={\rm id}_{K}$ in a strict braided monoidal setting, i.e., the algebra and coalgebra structure over $K$ is the trivial one, and also
\begin{align*}
\varepsilon_{I(\Phi_{H})}\circ\eta_{I(\Phi_{H})}=\varepsilon_{H}\circ i_{H}\circ p_{H}\circ \eta_{H}=\varepsilon_{H}\circ\Phi_{H}\circ\eta_{H}=\varepsilon_{H}\circ\eta_{H}={\rm id}_{K}
\end{align*}
by \eqref{iconp} and the condition of coalgebra morphism for $\Phi_{H}$.

On the other hand,
\begin{gather*}
 \delta_{I(\Phi_{H})}\circ\mu_{I(\Phi_{H})}\\
 \qquad{}= (p_{H}\otimes p_{H})\circ\delta_{H}\circ\Phi_{H}\circ\overline{\mu}_{H}\circ (i_{H}\otimes i_{H})\ \text{\footnotesize\textnormal{(by \eqref{iconp})}}\\
 \qquad{}= (p_{H}\otimes p_{H})\circ\delta_{H}\circ\overline{\mu}_{H}\circ(i_{H}\otimes i_{H})\ \text{\footnotesize\textnormal{(by (ii.1) of Definition \ref{WTPH} and \eqref{Phiconi})}}\\
 \qquad{}= ((p_{H}\circ\overline{\mu}_{H})\otimes(p_{H}\circ\overline{\mu}_{H}))\circ(H\otimes c_{H,H}\otimes H)\circ((\delta_{H}\circ i_{H})\otimes(\delta_{H}\circ i_{H}))\\
 \qquad\quad{}\ \text{\footnotesize\textnormal{(by the condition of coalgebra morphism for $\overline{\mu}_{H}$)}}\\
 \qquad{}= ((p_{H}\circ\overline{\mu}_{H})\otimes(p_{H}\circ\overline{\mu}_{H}))\circ(\Phi_{H}\otimes (c_{H,H}\circ(\Phi_{H}\otimes \Phi_{H}))\otimes \Phi_{H})\\
 \qquad\quad{} \circ((\delta_{H}\circ i_{H})\otimes(\delta_{H}\circ i_{H}))\ \text{\footnotesize\textnormal{(by \eqref{Phiconi} and (ii.1) of Definition \ref{WTPH})}}\\
 \qquad{}= (\mu_{I(\Phi_{H})}\otimes\mu_{I(\Phi_{H})})\circ\bigl(I(\Phi_{H})\otimes c_{I(\Phi_{H}),I(\Phi_{H})}\otimes I(\Phi_{H})\bigr)\circ\bigl(\delta_{I(\Phi_{H})}\otimes\delta_{I(\Phi_{H})}\bigr)\\
 \qquad\quad{}\ \text{\footnotesize\textnormal{(by naturality of $c$ and \eqref{iconp})}}.\tag*{\qed}
\end{gather*}
\renewcommand{\qed}{}
\end{proof}

\begin{Remark}
Note that
\[
I(\Phi_{H})=\big(I(\Phi_{H}),\varepsilon_{I(\Phi_{H})},\delta_{I(\Phi_{H})}\big)
\]
is always a coalgebra in {\sf C} when $(H,m_{H},\Phi_{H})$ is an object in ${\sf wt}\textnormal{-}\sf{Post}\textnormal{-}\sf{Hopf}$ such that $\Phi_{H}\circ\eta_{H}=\eta_{H}$, which is a weaker condition than the one imposed in the previous theorem.
\end{Remark}
\begin{Corollary}\label{halgIphi}
Let $(H,m_{H},\Phi_{H})$ be an object in $\sf{coc}\textnormal{-}\sf{t}\textnormal{-}\sf{Post}\textnormal{-}\sf{Hopf}$ such that $\tilde{\beta}_{H}$ is a coalgebra morphism. $I(\Phi_{H})$ is a cocommutative Hopf algebra in $\sf{C}$ with antipode
\begin{equation}\label{deflambdaI}
\lambda_{I(\Phi_{H})}\coloneqq p_{H}\circ S_{H}\circ i_{H},
\end{equation}
where $S_{H}$ is defined as in \eqref{SHcocom}.
\end{Corollary}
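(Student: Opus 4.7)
The plan is to build on Theorem \ref{bialgIphi}, which already endows $I(\Phi_{H})$ with a bialgebra structure (applicable here because cocommutativity of $H$ forces \eqref{classcocommH}). What remains is to verify cocommutativity of $\delta_{I(\Phi_{H})}$ and to check that the proposed $\lambda_{I(\Phi_{H})}=p_{H}\co S_{H}\co i_{H}$ is indeed an antipode. Cocommutativity follows immediately from $\delta_{I(\Phi_{H})}=(p_{H}\ot p_{H})\co \delta_{H}\co i_{H}$, naturality of $c$, and $c_{H,H}\co \delta_{H}=\delta_{H}$.

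Before tackling the convolution identities I would record a few preliminary facts. From $\Phi_{H}\co i_{H}=i_{H}$ (see \eqref{Phiconi}) and (ii.1) of Definition \ref{WTPH}, one deduces $\delta_{H}\co i_{H}=(i_{H}\ot i_{H})\co \delta_{I(\Phi_{H})}$, so $i_{H}$ is a coalgebra morphism; analogously $p_{H}$ is one. Combined with the fact (established in the previous theorem, using that $\tilde{\beta}_{H}$ is a coalgebra morphism) that $S_{H}$ itself is a coalgebra morphism, this yields at once that $\lambda_{I(\Phi_{H})}$ is a coalgebra morphism. Moreover, an easy check using $\varepsilon_{H}\co S_{H}=\varepsilon_{H}$ and $S_{H}\co \eta_{H}=\eta_{H}$ (the latter obtained by evaluating \eqref{convolution one side} at $\eta_{H}$ and applying \eqref{muetaid}) gives $\varepsilon_{I(\Phi_{H})}\co \lambda_{I(\Phi_{H})}=\varepsilon_{I(\Phi_{H})}$ and $\lambda_{I(\Phi_{H})}\co \eta_{I(\Phi_{H})}=\eta_{I(\Phi_{H})}$.

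For the identity ${\rm id}_{I(\Phi_{H})}\ast \lambda_{I(\Phi_{H})}=\eta_{I(\Phi_{H})}\co \varepsilon_{I(\Phi_{H})}$, I would unwind the definitions, insert $i_{H}\co p_{H}=\Phi_{H}$ at the appropriate places, apply (ii.1) of Definition \ref{WTPH} to transport the factor $(\Phi_{H}\ot\Phi_{H})$ through $\delta_{H}$, use property (iii) of Definition \ref{WTPH} in the form $\Phi_{H}\co \overline{\mu}_{H}=\overline{\mu}_{H}\co (H\ot\Phi_{H})$, and finally apply $p_{H}\co \Phi_{H}=p_{H}$ to reduce everything to $p_{H}\co \bigl({\rm id}_{H}\barast S_{H}\bigr)\co i_{H}$. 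Equation \eqref{convolution one side} then gives the desired value.

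The reverse identity $\lambda_{I(\Phi_{H})}\ast {\rm id}_{I(\Phi_{H})}=\eta_{I(\Phi_{H})}\co \varepsilon_{I(\Phi_{H})}$ is the subtle step, since Theorem \ref{iffPhi=id} precisely forbids the analogous equality at the level of $H$ unless $\Phi_{H}={\rm id}_{H}$. The plan is to bypass this obstruction by first proving that $\lambda_{I(\Phi_{H})}$ is an involution: using $i_{H}\co p_{H}=\Phi_{H}$ together with \eqref{SH2=PHI} one computes $\lambda_{I(\Phi_{H})}\co \lambda_{I(\Phi_{H})}=p_{H}\co (S_{H}\co \Phi_{H}\co S_{H})\co i_{H}$, and $S_{H}\co \Phi_{H}\co S_{H}=S_{H}^{4}=\Phi_{H}^{2}=\Phi_{H}$, whence $\lambda_{I(\Phi_{H})}\co \lambda_{I(\Phi_{H})}={\rm id}_{I(\Phi_{H})}$. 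Composing the already established identity ${\rm id}_{I(\Phi_{H})}\ast \lambda_{I(\Phi_{H})}=\eta_{I(\Phi_{H})}\co \varepsilon_{I(\Phi_{H})}$ with $\lambda_{I(\Phi_{H})}$ on the right, pushing $\lambda_{I(\Phi_{H})}$ through $\delta_{I(\Phi_{H})}$ by its coalgebra-morphism property and then cancelling the resulting $\lambda_{I(\Phi_{H})}^{2}$ using involutivity, the left-hand side becomes $\lambda_{I(\Phi_{H})}\ast {\rm id}_{I(\Phi_{H})}$; the right-hand side remains $\eta_{I(\Phi_{H})}\co \varepsilon_{I(\Phi_{H})}$ by $\varepsilon_{I(\Phi_{H})}\co \lambda_{I(\Phi_{H})}=\varepsilon_{I(\Phi_{H})}$. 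I expect this symmetrisation trick to be the conceptual heart of the proof, since it is what allows us to extract a two-sided antipode on $I(\Phi_{H})$ from a one-sided convolution inverse on $H$.
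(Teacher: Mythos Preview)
Your argument is correct. For the identity ${\rm id}_{I(\Phi_{H})}\ast \lambda_{I(\Phi_{H})}=\eta_{I(\Phi_{H})}\co \varepsilon_{I(\Phi_{H})}$ you do exactly what the paper does: unravel via \eqref{iconp}, absorb the $\Phi_{H}$'s using (ii.1), (iii) of Definition~\ref{WTPH} and \eqref{Phiconi}, and reduce to \eqref{convolution one side}.

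For $\lambda_{I(\Phi_{H})}\ast {\rm id}_{I(\Phi_{H})}$ your route differs in packaging rather than in substance. The paper computes directly at the level of $H$: it inserts $\Phi_{H}=\Phi_{H}\co S_{H}\co S_{H}\co \Phi_{H}$ (via \eqref{impliSH2=PHI}), strips the outer $\Phi_{H}$'s using (ii.1), (iii) and the auxiliary identity $\overline{\mu}_{H}\co(\Phi_{H}\ot H)=\overline{\mu}_{H}$, and then uses that $S_{H}$ is a coalgebra morphism to rewrite $\overline{\mu}_{H}\co(S_{H}\ot S_{H}^{2})\co\delta_{H}$ as $({\rm id}_{H}\barast S_{H})\co S_{H}$, finishing again with \eqref{convolution one side}. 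You instead first promote these same ingredients to statements about $\lambda_{I(\Phi_{H})}$ on $I(\Phi_{H})$---it is an involution (from \eqref{SH2=PHI} and idempotence of $\Phi_{H}$) and a coalgebra morphism (from $i_{H}$, $S_{H}$, $p_{H}$ each being one)---and then run the standard ``precompose the one-sided antipode identity with $\lambda$'' trick. Both arguments use precisely the same three facts ($S_{H}^{2}=\Phi_{H}$, $S_{H}$ a coalgebra morphism, \eqref{convolution one side}); your version is a bit more conceptual and reusable, the paper's is a single linear computation.
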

\begin{proof}
We have to prove that $\lambda_{I(\Phi_{H})}$ satisfies \eqref{antipode}. On the one side,
\begin{align*}
{\rm id}_{I(\Phi_{H})}\ast \lambda_{I(\Phi_{H})}
={}&p_{H}\circ\overline{\mu}_{H}\circ(\Phi_{H}\otimes(\Phi_{H}\circ S_{H}\circ \Phi_{H}))\circ\delta_{H}\circ i_{H}\ \text{\footnotesize\textnormal{(by \eqref{deflambdaI} and \eqref{iconp})}}\\
={}&p_{H}\circ \overline{\mu}_{H}\circ(H\otimes S_{H})\circ\delta_{H}\circ i_{H}\ \text{\footnotesize\textnormal{(by (ii.1) and (iii) of Definition \ref{WTPH} and \eqref{Phiconi})}}\\
={}&\eta_{I(\Phi_{H})}\circ\varepsilon_{I(\Phi_{H})}\ \text{\footnotesize\textnormal{(by \eqref{convolution one side})}}.
\end{align*}

Note that
\begin{gather}
\overline{\mu}_{H}\circ(\Phi_{H}\otimes H)\nonumber\\
\qquad{} =\mu_{H}\circ((\Phi_{H}\circ \Phi_{H})\otimes(m_{H}\circ(\Phi_{H}\otimes H)))\circ(\delta_{H}\otimes H)\ \text{\footnotesize\textnormal{(by (ii.1) of Definition \ref{WTPH})}}\nonumber\\
\qquad{} =\mu_{H}\circ(\Phi_{H}\otimes m_{H})\circ(\delta_{H}\otimes H)\ \text{\footnotesize\textnormal{(by the idempotent character of $\Phi_{H}$ and \eqref{mconPHI})}}\nonumber\\
\qquad{} =\overline{\mu}_{H}\ \text{\footnotesize\textnormal{(by definition of $\overline{\mu}_{H}$)}}.\label{barmuPhi}
\end{gather}

Therefore, on the other side,
\begin{gather*}
\lambda_{I(\Phi_{H})}\ast {\rm id}_{I(\Phi_{H})}\\
 =p_{H}\circ\overline{\mu}_{H}\circ((\Phi_{H}\circ S_{H}\circ \Phi_{H})\otimes\Phi_{H})\circ\delta_{H}\circ i_{H}\ \text{\footnotesize\textnormal{(by \eqref{deflambdaI} and \eqref{iconp})}}\\
 =p_{H}\circ\overline{\mu}_{H}\circ((\Phi_{H}\circ S_{H}\circ \Phi_{H})\otimes(\Phi_{H}\circ S_{H}\circ S_{H}\circ \Phi_{H}))\circ\delta_{H}\circ i_{H}\ \text{\footnotesize\textnormal{(by \eqref{impliSH2=PHI})}}\\
 =p_{H}\circ\overline{\mu}_{H}\circ((\Phi_{H}\circ S_{H})\otimes (S_{H}\circ S_{H}))\circ\delta_{H}\circ i_{H}\ \text{\footnotesize\textnormal{(by (ii.1) and (iii) of Definition \ref{WTPH} and \eqref{Phiconi})}}\\
 =p_{H}\circ\overline{\mu}_{H}\circ(S_{H}\otimes(S_{H}\circ S_{H}))\circ\delta_{H}\circ i_{H}\ \text{\footnotesize\textnormal{(by \eqref{barmuPhi})}}\\
 =p_{H}\circ\overline{\mu}_{H}\circ(H\otimes S_{H})\circ\delta_{H}\circ S_{H}\circ i_{H}\ \text{\footnotesize\textnormal{(by the condition of coalgebra morphism for $S_{H}$)}}\\
 =p_{H}\circ\eta_{H}\circ\varepsilon_{H}\circ S_{H}\circ i_{H}\ \text{\footnotesize\textnormal{(by \eqref{convolution one side})}}\\
 =\eta_{I(\Phi_{H})}\circ\varepsilon_{I(\Phi_{H})}\ \text{\footnotesize\textnormal{(by the condition of coalgebra morphism for $S_{H}$)}}.\tag*{\qed}
\end{gather*}
\renewcommand{\qed}{}
\end{proof}

\section{Twisted relative Rota--Baxter operators and Hopf trusses}\label{s3}
In \cite{LST}, Y.~Li, Y.~Sheng and R.~Tang introduce the notion of relative Rota--Baxter operators for Hopf algebras as a generalisation of Rota--Baxter operators introduced by M. Goncharov in~\cite{Goncharov} for cocommutative Hopf algebras. Despite both notions were introduced in the category of vector spaces over a field $\mathbb{F}$, the same definitions can be used when we work in a braided monoidal framework because there is no influence of the braiding used. So, if $H$ and $B$ are Hopf algebras in ${\sf C}$ such that $(H,\phi_{H})$ is a left $B$-module algebra-coalgebra, a relative Rota--Baxter operator is a coalgebra morphism $T\colon H\rightarrow B$ satisfying the equality:
\begin{equation}\label{rRb}
\mu_{B}\circ(T\otimes T)=T\circ\mu_{H}\circ(H\otimes(\phi_{H}\circ (T\otimes H)))\circ(\delta_{H}\otimes H),
\end{equation}
i.e.,
\[
\includegraphics[scale=3]{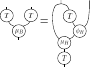}
\]

Note that every Rota--Baxter operator $B\colon H\rightarrow H$ in Goncharov's sense is a relative Rota--Baxter operator considering $\phi_{H}=\varphi_{H}^{{\rm ad}}$, the adjoint action, in the cocommutative setting. In~\cite{LST}, relative Rota--Baxter operators are used as a tool for finding new examples of post-Hopf algebras (and, as a consequence, of Hopf braces) and Y. Li et al. also construct a correspondence between the categories of relative Rota--Baxter operators and post-Hopf algebras that give rise to an adjoint pair between the respective functors (see \cite[Theorem 3.3]{LST}).

In this section, we are going to introduce the notion of weak twisted relative Rota--Baxter operators, whose main difference from the usual ones is that condition \eqref{rRb} is modified through an endomorphism $\Psi_{H}$ of $H$. In what follows we are going to prove that, while relative Rota--Baxter operators are in correspondence with post-Hopf algebras and Hopf braces, these new objects give rise to a correspondence with Hopf trusses, obtaining generalisations to this new context of the results above-mentioned. Moreover, if we consider the subcategory of weak twisted relative Rota--Baxter operators such that $T$ is an isomorphism, we will also prove that this subcategory is equivalent to the category of Hopf trusses.
\begin{Definition}\label{WTRB}
Let $H=(H,\eta_{H},\mu_{H},\varepsilon_{H},\delta_{H},\lambda_{H})$ be a Hopf algebra and let $B=(B,\mu_{B},\varepsilon_{B},\delta_{B})$ be a non-unital bialgebra in $\sf{C}$. Suppose that there exists a morphism $\varphi_{H}\colon B\otimes H\rightarrow H$ such that $(H,\varphi_{H})$ is a non-unital left $B$-module algebra-coalgebra. We will say that a~coalgebra morphism $T\colon H\rightarrow B$ is a weak twisted relative Rota--Baxter operator if there exists ${\Psi_{H}\colon H\rightarrow H}$ a~coalgebra morphism, called the cocycle of $T$, such that the following conditions hold:
\begin{itemize}\itemsep=0pt
\item[(i)] $\mu_{B}\circ (T\otimes T)=T\circ \mu_{H}\circ (\Psi_{H}\otimes(\varphi_{H}\circ(T\otimes H)))\circ(\delta_{H}\otimes H)$,
\item[(ii)] $\Psi_{H}\circ\mu_{H}\circ (\Psi_{H}\otimes(\varphi_{H}\circ(T\otimes H)))\circ(\delta_{H}\otimes H)=\mu_{H}\circ (\Psi_{H}\otimes(\varphi_{H}\circ(T\otimes H)))\circ(\delta_{H}\otimes \Psi_{H})$.
\end{itemize}

If it also holds that
\begin{itemize}\itemsep=0pt
\item[(iii)] $\Psi_{H}\circ \eta_{H}=\eta_{H}$, i.e., the cocycle $\Psi_{H}$ preserves the unit,
\end{itemize}
then $T\colon H\rightarrow B$ is said to be a twisted relative Rota--Baxter operator.

In what follows we will denote (weak) twisted relative Rota--Baxter operators by
\[
\Biggl(T\begin{array}{c} H\\\downarrow\\B\end{array},\varphi_{H},\Psi_{H}\Biggr).
\]
\end{Definition}
We are going to define by $\mathfrak{m}_{H}\coloneqq \varphi_{H}\circ (T\otimes H)$, i.e.,
\[
\includegraphics[scale=3]{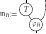}
\]
\noindent Therefore, conditions (i) and (ii) of previous definition are equivalent to
\begin{itemize}\itemsep=0pt
\item[(i)] $\mu_{B}\circ (T\otimes T)=T\circ\mu_{H}\circ (\Psi_{H}\otimes\mathfrak{m}_{H})\circ(\delta_{H}\otimes H)$, i.e.,
\[
\includegraphics[scale=3]{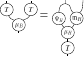}
\]

\item[(ii)] $\Psi_{H}\circ \mu_{H}\circ (\Psi_{H}\otimes\mathfrak{m}_{H})\circ(\delta_{H}\otimes H)=\mu_{H}\circ (\Psi_{H}\otimes\mathfrak{m}_{H})\circ(\delta_{H}\otimes \Psi_{H})$, i.e.,
\[
\includegraphics[scale=3]{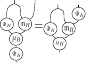}
\]
\end{itemize}
\begin{Definition}
Let
\[
\Biggl(T\begin{array}{c}H\\\downarrow\\B\end{array},\varphi_{H},\Psi_{H}\Biggr) \qquad \text{and} \qquad \Biggl(T'\begin{array}{c}H'\\\downarrow\\B'\end{array},\varphi_{H'},\Psi_{H'}\Biggr)
\] be weak twisted relative Rota--Baxter operators. We will say that a pair
\[
(f,g)\colon \ \Biggl(T\begin{array}{c}H\\\downarrow\\B\end{array},\varphi_{H},\Psi_{H}\Biggr)\rightarrow \Biggl(T'\begin{array}{c}H'\\\downarrow\\B'\end{array},\varphi_{H'},\Psi_{H'}\Biggr),
\]
where $f\colon H\rightarrow H'$ is a Hopf algebra morphism and $g\colon B\rightarrow B'$ is a morphism of non-unital bialgebras, is a morphism of weak twisted relative Rota--Baxter operators if the following conditions hold:
\begin{gather}
\label{mor0twrRb}
T'\circ f=g\circ T,\\
\label{mor1twrRb}
f\circ \Psi_{H}=\Psi_{H'}\circ f,\\
\label{mor2.0twrRb}
f\circ \varphi_{H}=\varphi_{H'}\circ(g\otimes f).
\end{gather}
\end{Definition}

So, weak twisted relative Rota--Baxter operators give rise to a category that we will denote by~${\sf wtr}$-${\sf RB}$, for which twisted relative Rota--Baxter operators constitute a full subcategory denoted by ${\sf tr}$-${\sf RB}$.
\begin{Remark}
Consider
\[
\Biggl(T\begin{array}{c}H\\\downarrow\\B\end{array},\varphi_{H},\Psi_{H}\Biggr)
\]
 a weak twisted relative Rota--Baxter operator. Due to~$T$ being a coalgebra morphism and $(H,\varphi_{H})$ a non-unital left $B$-module algebra-coalgebra, it is straightforward to prove that the following equalities hold:
\begin{gather}\label{etamH}
\mathfrak{m}_{H}\circ(H\otimes\eta_{H})=\varepsilon_{H}\otimes\eta_{H},\\
\label{mumH}
\mathfrak{m}_{H}\circ(H\otimes\mu_{H})=\mu_{H}\circ(\mathfrak{m}_{H}\otimes\mathfrak{m}_{H})\circ(H\otimes c_{H,H}\otimes H)\circ(\delta_{H}\otimes H\otimes H),\\
\varepsilon_{H}\circ\mathfrak{m}_{H}=\varepsilon_{H}\otimes\varepsilon_{H},\nonumber\\
\label{deltamH}
\delta_{H}\circ \mathfrak{m}_{H}=(\mathfrak{m}_{H}\otimes\mathfrak{m}_{H})\circ(H\otimes c_{H,H}\otimes H)\circ(\delta_{H}\otimes\delta_{H}).
\end{gather}

Moreover, the equality
\begin{equation}\label{mHtildemuH}
\mathfrak{m}_{H}\circ((\mu_{H}\circ(\Psi_{H}\otimes \mathfrak{m}_{H})\circ(\delta_{H}\otimes H))\otimes H)=\mathfrak{m}_{H}\circ(H\otimes\mathfrak{m}_{H}),
\end{equation}
i.e.,
\[
\includegraphics[scale=3]{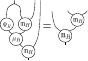}
\]
\noindent also holds. Indeed,
\begin{align*}
&\mathfrak{m}_{H}\circ((\mu_{H}\circ(\Psi_{H}\otimes \mathfrak{m}_{H})\circ(\delta_{H}\otimes H))\otimes H)\\
&\qquad{}=\varphi_{H}\circ((\mu_{B}\circ(T\otimes T))\otimes H)\ \text{\footnotesize\textnormal{(by (i) of Definition \ref{WTRB})}}\\
&\qquad{}=\mathfrak{m}_{H}\circ(H\otimes\mathfrak{m}_{H})\ \text{\footnotesize\textnormal{(by the $B$-module conditions)}}.
\end{align*}
\end{Remark}
\begin{Remark}
Note that if $(f,g)$ is a morphism between the weak twisted relative Rota--Baxter operators
\[
\Biggl(T\begin{array}{c}H\\\downarrow\\B\end{array},\varphi_{H},\Psi_{H}\Biggl) \qquad \text{and}\qquad \Biggl(T'\begin{array}{c}H'\\\downarrow\\B'\end{array},\varphi_{H'},\Psi_{H'}\Biggr),
\]
then \begin{equation}\label{mor2twrRb}
f\circ\mathfrak{m}_{H}=\mathfrak{m}_{H'}\circ(f\otimes f).
\end{equation}
Indeed, by \eqref{mor2.0twrRb} and \eqref{mor0twrRb}, we have
\[f\circ\mathfrak{m}_{H}=\varphi_{H'}\circ((g\circ T)\otimes f)=\varphi_{H'}\circ\bigl(\bigl(T'\circ f\bigr)\otimes f\bigr)=\mathfrak{m}_{H'}\circ(f\otimes f).
\]
\end{Remark}
\begin{Theorem}
Let
\[
\Biggl(T\begin{array}{c}H\\\downarrow\\B\end{array},\varphi_{H},\Psi_{H}\Biggl)
\]
 be a twisted relative Rota--Baxter operator such that $B$ is a bialgebra in ${\sf C}$. It is satisfied that
\begin{equation}\label{etaT}
T\circ\eta_{H}=\eta_{B}.
\end{equation}
\end{Theorem}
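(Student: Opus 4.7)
The plan is to specialize the twisted relative Rota--Baxter identity (i) of Definition~\ref{WTRB} to the pair $(a,b) = (\eta_H, \eta_H)$ and simplify both sides using the cocycle-preserves-unit hypothesis (iii) together with the basic identity~\eqref{etamH} already established for $\mathfrak{m}_H$.

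Concretely, the left-hand side of (i) evaluated at $\eta_H \otimes \eta_H$ is $\mu_B \circ ((T \circ \eta_H) \otimes (T \circ \eta_H))$. For the right-hand side, I first use that $\eta_H$ is a coalgebra morphism, giving $\delta_H \circ \eta_H = \eta_H \otimes \eta_H$; condition~(iii) gives $\Psi_H \circ \eta_H = \eta_H$; and \eqref{etamH} combined with $\varepsilon_H \circ \eta_H = \mathrm{id}_K$ yields $\mathfrak{m}_H \circ (\eta_H \otimes \eta_H) = \eta_H$. The unit axiom for $\mu_H$ then collapses the right-hand side to $T \circ \eta_H$. This proves
\[
\mu_B \circ ((T \circ \eta_H) \otimes (T \circ \eta_H)) = T \circ \eta_H,
\]
so that $T \circ \eta_H \colon K \to B$ is an idempotent for the product $\mu_B$.

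To upgrade this idempotency to the identity $T \circ \eta_H = \eta_B$, I combine it with the observation that $T \circ \eta_H$ is grouplike: since both $T$ and $\eta_H$ are coalgebra morphisms, $\delta_B \circ (T \circ \eta_H) = (T \circ \eta_H) \otimes (T \circ \eta_H)$ and $\varepsilon_B \circ (T \circ \eta_H) = \mathrm{id}_K$. Being grouplike and $\mu_B$-idempotent, together with the unit axioms for $\eta_B$ and the bialgebra compatibility in~$B$, forces coincidence with $\eta_B$.

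The main obstacle is the final identification, since in a general braided monoidal setting a grouplike idempotent in a unital bialgebra need not be the unit. The argument must leverage extra structure available here: namely, by substituting $b = \eta_H$ into (i) with arbitrary $a$ and simplifying via (iii) and \eqref{etamH}, one extracts the auxiliary identity $T(a) \cdot (T \circ \eta_H) = T \circ \Psi_H \circ a$ in $\mathrm{Hom}_{\sf C}(H, B)$, which allows one to invoke a convolution-inverse argument --- using the antipode $\lambda_H$ of the Hopf algebra $H$ --- to produce a one-sided inverse of $T \circ \eta_H$ in $B$. Once such an inverse is available, the standard cancellation $e \cdot e = e \Rightarrow e = \eta_B$ applies, closing the proof.
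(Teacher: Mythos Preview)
Your first step---specializing (i) of Definition~\ref{WTRB} to $\eta_H\otimes\eta_H$ and using (iii) together with \eqref{etamH} to obtain the idempotency
\[
\mu_B\circ\bigl((T\circ\eta_H)\otimes(T\circ\eta_H)\bigr)=T\circ\eta_H
\]
---is correct and is exactly what the paper does (this is its equation~\eqref{etaTproof1}). You are also right that $T\circ\eta_H$ is grouplike, and right that in a bare bialgebra a grouplike idempotent need not equal the unit, so additional structure is required.

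The gap is in your proposed second step. From the auxiliary identity you extract,
\[
\mu_B\circ\bigl(T\otimes(T\circ\eta_H)\bigr)=T\circ\Psi_H\qquad\text{in }\Hom_{\sf C}(H,B),
\]
you claim that a convolution argument with $\lambda_H$ produces a one-sided inverse of $e=T\circ\eta_H$ in $B$. But you do not exhibit one, and there is no evident way to do so: to cancel $e$ from the left of that identity one would need a convolution inverse of $T$ in $\Hom_{\sf C}(H,B)$, and $T\circ\lambda_H$ is \emph{not} such an inverse---computing $(T\circ\lambda_H)\ast T$ via (i) gives $T\circ\widetilde{\mu}_H\circ(\lambda_H\otimes H)\circ\delta_H$, which has no reason to collapse to $\eta_B\circ\varepsilon_H$ since $\lambda_H$ is not an antipode for $\widetilde{\mu}_H$. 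Nothing in the hypotheses lets you manufacture an inverse of $e$ from $\lambda_H$ alone.

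The paper instead cancels using the antipode $\lambda_B$ of $B$ (so in effect it treats $B$ as a Hopf algebra). Starting from $\eta_B=\eta_B\circ\varepsilon_B\circ T\circ\eta_H$ (valid since $\varepsilon_B\circ T\circ\eta_H=\varepsilon_H\circ\eta_H={\rm id}_K$), it rewrites this as $\mu_B\circ(\lambda_B\otimes B)\circ\delta_B\circ T\circ\eta_H$, expands via grouplikeness to $\mu_B\circ((\lambda_B\circ e)\otimes e)$, inserts the idempotency $e=\mu_B\circ(e\otimes e)$, re-associates, collapses $\mu_B\circ(\lambda_B\otimes B)\circ\delta_B\circ e$ back to $\eta_B\circ\varepsilon_B\circ e=\eta_B$, and concludes $\eta_B=\mu_B\circ(\eta_B\otimes e)=e$. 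This is the standard ``idempotent with a one-sided inverse is the unit'' trick, but the inverse comes from $\lambda_B$, not $\lambda_H$. If you want to repair your argument, you should invoke $\lambda_B$ directly rather than trying to route through $\lambda_H$.
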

\begin{proof}
By (i) of Definition \ref{WTRB}, we obtain that
\begin{align}
&\mu_{B}\circ((T\circ\eta_{H})\otimes (T\circ\eta_{H}))\nonumber\\
&\qquad{}=T\circ\mu_{H}\circ(\Psi_{H}\otimes\mathfrak{m}_{H})\circ((\delta_{H}\circ\eta_{H})\otimes \eta_{H})\ \text{\footnotesize\textnormal{(by (i) of Definition \ref{WTRB})}}\nonumber\\
&\qquad{}=T\circ\mu_{H}\circ(\Psi_{H}\otimes(\eta_{H}\circ\varepsilon_{H}))\circ\delta_{H}\circ\eta_{H}\ \text{\footnotesize\textnormal{(by \eqref{etamH})}}\nonumber\\
&\qquad{}=T\circ\eta_{H}\ \text{\footnotesize\textnormal{(by the (co)unit properties and (iii) of Definition \ref{WTRB})}}.\label{etaTproof1}
\end{align}

Therefore,
\begin{align*}
\eta_{B}&{}=\eta_{B}\circ\varepsilon_{B}\circ T\circ\eta_{H}\ \text{\footnotesize\textnormal{(by (co)unit properties and the condition of coalgebra morphism for $T$)}}\\
&{}=\mu_{B}\circ(\lambda_{B}\otimes B)\circ\delta_{B}\circ T\circ\eta_{H}\ \text{\footnotesize\textnormal{(by \eqref{antipode})}}\\
&{}=\mu_{B}\circ((\lambda_{B}\circ T\circ\eta_{H})\otimes (T\circ\eta_{H}))\ \text{\footnotesize\textnormal{(by the condition of coalgebra morphism for $T$ and $\eta_{H}$)}}\\
&{}=\mu_{B}\circ((\lambda_{B}\circ T\circ\eta_{H})\otimes(\mu_{B}\circ ((T\circ\eta_{H})\otimes(T\circ\eta_{H}))))\ \text{\footnotesize\textnormal{(by \eqref{etaTproof1})}}\\
&{}=\mu_{B}\circ((\mu_{B}\circ(\lambda_{B}\otimes B)\circ\delta_{B}\circ T\circ\eta_{H})\otimes(T\circ\eta_{H}))\\
&\quad{}\ \text{\footnotesize\textnormal{(by associativity of $\mu_{B}$ and the condition of coalgebra morphism for $T$ and $\eta_{H}$)}}\\
&{}=\mu_{B}\circ((\eta_{B}\circ\varepsilon_{B}\circ T\circ\eta_{H})\otimes(T\circ\eta_{H}))\ \text{\footnotesize\textnormal{(by \eqref{antipode})}}\\
&{}=T\circ\eta_{H}\ \text{\footnotesize\textnormal{(by the condition of coalgebra morphism for $T$ and (co)unit properties)}}.\tag*{\qed}
\end{align*}
\renewcommand{\qed}{}
\end{proof}

\begin{Corollary}
Let
\[
\Biggl(T\begin{array}{c}H\\\downarrow\\B\end{array},\varphi_{H},\Psi_{H}\Biggr)
\]
 be a twisted relative Rota--Baxter operator such that $B$ is a bialgebra in ${\sf C}$ and $(H,\varphi_{H})$ is a left $B$-module algebra-coalgebra. The equality
\begin{equation}\label{frakmHeta}
\mathfrak{m}_{H}\circ(\eta_{H}\otimes H)={\rm id}_{H}
\end{equation}
holds.
\end{Corollary}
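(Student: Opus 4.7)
The proof is essentially a two-line unwinding that pieces together the previous theorem with the module-unit axiom, so I would not expect any real obstacle here. The plan is to expand the definition of $\mathfrak{m}_{H}$, push $\eta_{H}$ through $T$, and then apply the unit axiom for the left $B$-module $H$.

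More precisely, by definition $\mathfrak{m}_{H}=\varphi_{H}\circ (T\otimes H)$, so
\[
\mathfrak{m}_{H}\circ(\eta_{H}\otimes H)=\varphi_{H}\circ((T\circ\eta_{H})\otimes H).
\]
The hypotheses on $B$ (a bialgebra, hence possessing a unit $\eta_{B}$) and on $T$ (a twisted relative Rota--Baxter operator with $\Psi_{H}\circ\eta_{H}=\eta_{H}$) are exactly those of the preceding theorem, which yields $T\circ\eta_{H}=\eta_{B}$, i.e., equality~\eqref{etaT}. Substituting, we obtain
\[
\mathfrak{m}_{H}\circ(\eta_{H}\otimes H)=\varphi_{H}\circ(\eta_{B}\otimes H).
\]

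Finally, since $B$ is a bialgebra and $(H,\varphi_{H})$ is a left $B$-module algebra-coalgebra, it is in particular a (unital) left $B$-module, and therefore $\varphi_{H}\circ(\eta_{B}\otimes H)={\rm id}_{H}$. Chaining the two equalities gives $\mathfrak{m}_{H}\circ(\eta_{H}\otimes H)={\rm id}_{H}$, as claimed. The only nontrivial ingredient is the preceding theorem (whose proof used the cocycle condition $\Psi_{H}\circ\eta_{H}=\eta_{H}$ together with the antipode axiom~\eqref{antipode} to upgrade the weak identity~\eqref{etaTproof1} to~\eqref{etaT}); once that is available, the present corollary is a direct substitution.
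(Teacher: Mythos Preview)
Your proof is correct and matches the paper's argument exactly: the paper also substitutes \eqref{etaT} into the definition of $\mathfrak{m}_{H}$ and then applies the unit axiom for the left $B$-module $(H,\varphi_{H})$. Your version simply spells out the intermediate step $\mathfrak{m}_{H}\circ(\eta_{H}\otimes H)=\varphi_{H}\circ((T\circ\eta_{H})\otimes H)$ explicitly.
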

\begin{proof}
Using \eqref{etaT} and the condition of left $B$-module, we have that
\begin{align*}
&\mathfrak{m}_{H}\circ(\eta_{H}\otimes H)=\varphi_{H}\circ(\eta_{B}\otimes H)={\rm id}_{H}.\tag*{\qed}
\end{align*}
\renewcommand{\qed}{}
\end{proof}

\begin{Theorem}
Let
\[
\Biggl(T\begin{array}{c}H\\\downarrow\\B\end{array},\varphi_{H},\Psi_{H}\Biggr)
\]
 be a weak twisted relative Rota--Baxter operator such that
\begin{gather}\label{classcocomfrakmH}
(\mathfrak{m}_{H}\otimes H)\circ(H\otimes c_{H,H})\circ ((c_{H,H}\circ\delta_{H})\otimes H )=(\mathfrak{m}_{H}\otimes H)\circ(H\otimes c_{H,H})\circ(\delta_{H}\otimes H),\!\!\!
\end{gather}
i.e.,
\[
\includegraphics[scale=3]{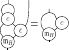}
\]
holds. Then, $\widetilde{H}=(H,\widetilde{\mu}_{H},\varepsilon_{H},\delta_{H})$, where \[\widetilde{\mu}_{H}\coloneqq \mu_{H}\circ (\Psi_{H}\otimes\mathfrak{m}_{H})\circ(\delta_{H}\otimes H),\]
i.e.,
\[
\includegraphics[scale=3]{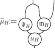}
\]
is a non-unital bialgebra in ${\sf C}$ that satisfies
\begin{equation}\label{etadrchmutilde}
\widetilde{\mu}_{H}\circ(H\otimes \eta_{H})=\Psi_{H}.
\end{equation}

Moreover, if
\[
\Biggl(T\begin{array}{c}H\\\downarrow\\B\end{array},\varphi_{H},\Psi_{H}\Biggr)
\]
 is a twisted relative Rota--Baxter operator such that $B$ is a~bialgebra and $(H,\varphi_{H})$ is a left $B$-module algebra-coalgebra, then
\begin{equation}\label{etaizqmutilde}
\widetilde{\mu}_{H}\circ(\eta_{H}\otimes H)={\rm id}_{H}.
\end{equation}
\end{Theorem}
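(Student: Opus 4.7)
The plan is to mirror the proof of Theorem \ref{nuHbar} almost verbatim, because the package $(\Psi_{H},\mathfrak{m}_{H})$ attached to a weak twisted relative Rota--Baxter operator satisfies precisely the analogues of the axioms of Definition \ref{WTPH} that are used there. Namely, $\Psi_{H}$ plays the role of the cocycle $\Phi_{H}$ and is a coalgebra morphism by hypothesis; $\mathfrak{m}_{H}$ plays the role of $m_{H}$ and is a coalgebra morphism by \eqref{deltamH} and the counit identity $\varepsilon_{H}\circ\mathfrak{m}_{H}=\varepsilon_{H}\otimes\varepsilon_{H}$; condition (iii) of Definition \ref{WTPH} corresponds to (ii) of Definition \ref{WTRB}; condition (iv) corresponds to \eqref{mHtildemuH}; condition (v) corresponds to \eqref{mumH}; and \eqref{classcocommH} corresponds to \eqref{classcocomfrakmH}.

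First I would verify the associativity of $\widetilde{\mu}_{H}$. Starting from $\widetilde{\mu}_{H}\circ(\widetilde{\mu}_{H}\otimes H)$, expand the outer $\delta_{H}$ using the fact that $\mu_{H}$ is a coalgebra morphism, then split $\delta_{H}\circ\Psi_{H}$ and $\delta_{H}\circ\mathfrak{m}_{H}$ into $(\Psi_{H}\otimes\Psi_{H})\circ\delta_{H}$ and $(\mathfrak{m}_{H}\otimes\mathfrak{m}_{H})\circ(H\otimes c_{H,H}\otimes H)\circ(\delta_{H}\otimes\delta_{H})$ respectively; rearrange by naturality of $c$ and coassociativity of $\delta_{H}$; apply \eqref{classcocomfrakmH} to swap a $c_{H,H}\circ\delta_{H}$ factor for $\delta_{H}$; apply (ii) of Definition \ref{WTRB} and \eqref{mHtildemuH} to rewrite the inner compositions; and conclude by \eqref{mumH} after a final use of naturality and associativity of $\mu_{H}$. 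This is the delicate step and the main place where the symmetry hypothesis \eqref{classcocomfrakmH} is genuinely needed, but it is a line-by-line transcription of the corresponding computation in Theorem \ref{nuHbar}.

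Next I would show that $\widetilde{\mu}_{H}$ is a coalgebra morphism. The counit identity $\varepsilon_{H}\circ\widetilde{\mu}_{H}=\varepsilon_{H}\otimes\varepsilon_{H}$ is immediate from $\varepsilon_{H}\circ\Psi_{H}=\varepsilon_{H}$, $\varepsilon_{H}\circ\mathfrak{m}_{H}=\varepsilon_{H}\otimes\varepsilon_{H}$, the algebra-morphism property of $\varepsilon_{H}$ and counit properties. For the coproduct, apply the coalgebra-morphism property of $\mu_{H}$, then insert $(\Psi_{H}\otimes\Psi_{H})\circ\delta_{H}$ and the expansion of $\delta_{H}\circ\mathfrak{m}_{H}$, rearrange via naturality of $c$ and coassociativity, and use \eqref{classcocomfrakmH} once more to bring the expression into the form $(\widetilde{\mu}_{H}\otimes\widetilde{\mu}_{H})\circ(H\otimes c_{H,H}\otimes H)\circ(\delta_{H}\otimes\delta_{H})$.

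Finally, \eqref{etadrchmutilde} is immediate: substituting $\eta_{H}$ on the right and using \eqref{etamH} replaces $\mathfrak{m}_{H}\circ(H\otimes\eta_{H})$ by $\varepsilon_{H}\otimes\eta_{H}$, which then collapses via counit and the unit property of $\mu_{H}$ to $\Psi_{H}$. For \eqref{etaizqmutilde} in the twisted case, write $\delta_{H}\circ\eta_{H}=\eta_{H}\otimes\eta_{H}$ (as $\eta_{H}$ is a coalgebra morphism), invoke (iii) of Definition \ref{WTRB} so that $\Psi_{H}\circ\eta_{H}=\eta_{H}$, and then use \eqref{frakmHeta} together with the unit property of $\mu_{H}$ to conclude. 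The only substantive obstacle is the associativity step; once that is written out, the rest of the statement is essentially bookkeeping.
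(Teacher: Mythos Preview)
Your proposal is correct and matches the paper's own proof essentially line for line: the paper likewise transcribes the computation of Theorem \ref{nuHbar} with $(\Phi_{H},m_{H})$ replaced by $(\Psi_{H},\mathfrak{m}_{H})$, citing at each step the corresponding identities \eqref{deltamH}, \eqref{mumH}, \eqref{mHtildemuH}, (ii) of Definition \ref{WTRB}, and \eqref{classcocomfrakmH}, and then dispatches \eqref{etadrchmutilde} and \eqref{etaizqmutilde} exactly as you describe.
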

\begin{proof}
Let us start computing the associativity of $\widetilde{\mu}_{H}$:
\begin{gather*}
\widetilde{\mu}_{H}\circ(\widetilde{\mu}_{H}\otimes H)\\
= \mu_{H}\circ(\Psi_{H}\otimes \mathfrak{m}_{H})\circ(((\mu_{H}\otimes\mu_{H})\circ(H\otimes c_{H,H}\otimes H)\circ((\delta_{H}\circ\Psi_{H})\otimes(\delta_{H}\circ \mathfrak{m}_{H})))\otimes H)\\
\quad{}\circ(\delta_{H}\otimes H\otimes H)\ \text{\footnotesize\textnormal{(by the condition of coalgebra morphism for $\mu_{H}$)}}\\
= \mu_{H}\circ(\Psi_{H}\otimes \mathfrak{m}_{H})\circ (((\mu_{H}\otimes \mu_{H})\circ(H\otimes c_{H,H}\otimes H))\otimes H)\\
\quad{}\circ (((\Psi_{H}\otimes \Psi_{H})\circ\delta_{H})\otimes((\mathfrak{m}_{H}\otimes \mathfrak{m}_{H})\circ(H\otimes c_{H,H}\otimes H)\circ(\delta_{H}\otimes\delta_{H}))\otimes H)\\
\quad{}\circ(\delta_{H}\otimes H\otimes H)\ \text{\footnotesize\textnormal{(by the condition of coalgebra morphism for $\Psi_{H}$ and \eqref{deltamH})}}\\
= \mu_{H}\circ ((\Psi_{H}\circ\mu_{H}\circ(\Psi_{H}\otimes H))\otimes(\mathfrak{m}_{H}\circ((\mu_{H}\circ(\Psi_{H}\otimes \mathfrak{m}_{H}))\otimes H)))\\
\quad{}\circ(H\otimes ((\mathfrak{m}_{H}\otimes H)\circ (H\otimes c_{H,H})\circ((c_{H,H}\circ\delta_{H})\otimes H))\otimes H\otimes H\otimes H)\\
\quad{}\circ(H\otimes((H\otimes c_{H,H}\otimes H)\circ(\delta_{H}\otimes\delta_{H}))\otimes H)\circ(\delta_{H}\otimes H\otimes H)\\
\quad{}\ \text{\footnotesize\textnormal{(by naturality of $c$ and coassociativity of $\delta_{H}$)}}\\
= \mu_{H}\\
\quad{}\circ((\Psi_{H}\circ\mu_{H}\circ (\Psi_{H}\otimes \mathfrak{m}_{H})\circ(\delta_{H}\otimes H))\otimes(\mathfrak{m}_{H}\circ((\mu_{H}\circ(\Psi_{H}\otimes \mathfrak{m}_{H})\circ(\delta_{H}\otimes H))\otimes H)))\\
\quad{} \circ(((H\otimes c_{H,H}\otimes H)\circ(\delta_{H}\otimes\delta_{H}))\otimes H)\ \text{\footnotesize\textnormal{(by \eqref{classcocomfrakmH}, coassociativity of $\delta_{H}$ and naturality of $c$)}}\\
= \mu_{H}\circ((\mu_{H}\circ (\Psi_{H}\otimes \mathfrak{m}_{H})\circ(\delta_{H}\otimes \Psi_{H}))\otimes(\mathfrak{m}_{H}\circ(H\otimes \mathfrak{m}_{H})))\\
\quad{}\circ(((H\otimes c_{H,H}\otimes H)\circ(\delta_{H}\otimes\delta_{H}))\otimes H)\ \text{\footnotesize\textnormal{(by (ii) of Definition \ref{WTRB} and \eqref{mHtildemuH})}}\\
= \mu_{H}\circ (\Psi_{H}\otimes (\mu_{H}\circ(\mathfrak{m}_{H}\otimes \mathfrak{m}_{H})\circ(H\otimes c_{H,H}\otimes H)\circ(\delta_{H}\otimes H\otimes H)))\\
\quad{}\circ(\delta_{H}\otimes ((\Psi_{H}\otimes \mathfrak{m}_{H})\circ(\delta_{H}\otimes H)))\\
\quad{}\ \text{\footnotesize\textnormal{(by naturality of $c$, coassociativity of $\delta_{H}$ and associativity of $\mu_{H}$)}}\\
= \widetilde{\mu}_{H}\circ(H\otimes\widetilde{\mu}_{H})\ \text{\footnotesize\textnormal{(by \eqref{mumH})}}.
\end{gather*}

Moreover, note that $\widetilde{\mu}_{H}$ is a coalgebra morphism. On the one hand, it is straightforward to prove that $\varepsilon_{H}\circ\widetilde{\mu}_{H}=\varepsilon_{H}\otimes\varepsilon_{H}$ and, on the other hand,
\begin{gather*}
\delta_{H}\circ\widetilde{\mu}_{H}\\
\qquad{}=(\mu_{H}\otimes\mu_{H})\circ(H\otimes c_{H,H}\otimes H)\circ((\delta_{H}\circ\Psi_{H})\otimes(\delta_{H}\circ\mathfrak{m}_{H}))\circ(\delta_{H}\otimes H)\\
\qquad\quad{}\ \text{\footnotesize\textnormal{(by the condition of coalgebra morphism for $\mu_{H}$)}}\\
\qquad{}=(\mu_{H}\otimes\mu_{H})\circ(H\otimes c_{H,H}\otimes H)\\
\qquad\quad{}\circ(((\Psi_{H}\otimes\Psi_{H})\circ\delta_{H})\otimes((\mathfrak{m}_{H}\otimes\mathfrak{m}_{H})\circ(H\otimes c_{H,H}\otimes H)
\circ(\delta_{H}\otimes\delta_{H})))\circ(\delta_{H}\otimes H)\\
\qquad\quad{}\ \text{\footnotesize\textnormal{(by the condition of coalgebra morphism for $\Psi_{H}$ and \eqref{deltamH})}}\\
\qquad{}=((\mu_{H}\circ(\Psi_{H}\otimes H))\otimes(\mu_{H}\circ(\Psi_{H}\otimes\mathfrak{m}_{H})))\\
\qquad\quad{}\circ(H\otimes((\mathfrak{m}_{H}\otimes H)\circ(H\otimes c_{H,H})\circ((c_{H,H}\circ\delta_{H})\otimes H))\otimes H\otimes H)\\
\qquad\quad{}\circ(H\otimes H\otimes c_{H,H}\otimes H)\circ(((H\otimes\delta_{H})\circ\delta_{H})\otimes\delta_{H})\\
\qquad\quad{}\ \text{\footnotesize\textnormal{(by naturality of $c$ and coassociativity of $\delta_{H}$)}}\\
\qquad{}=((\mu_{H}\circ(\Psi_{H}\otimes H))\otimes(\mu_{H}\circ(\Psi_{H}\otimes\mathfrak{m}_{H})))\\
\qquad\quad{}\circ(H\otimes((\mathfrak{m}_{H}\otimes H)\circ(H\otimes c_{H,H})\circ(\delta_{H}\otimes H))
\otimes H\otimes H)\\
\qquad\quad{}\circ(H\otimes H\otimes c_{H,H}\otimes H)\circ(((H\otimes\delta_{H})\circ\delta_{H})\otimes\delta_{H})\ \text{\footnotesize\textnormal{(by \eqref{classcocommH})}}\\
\qquad{}=(\widetilde{\mu}_{H}\otimes\widetilde{\mu}_{H})\circ(H\otimes c_{H,H}\otimes H)\circ(\delta_{H}\otimes\delta_{H})\ \text{\footnotesize\textnormal{(by coassociativity of $\delta_{H}$ and naturality of $c$)}}.
\end{gather*}

To finish, let us see that the unit properties \eqref{etadrchmutilde} and \eqref{etaizqmutilde} hold. Indeed, on the one hand,
\[
\widetilde{\mu}_{H}\circ(H\otimes\eta_{H})=\Psi_{H}
\]
by \eqref{etamH} and, on the other hand, \eqref{etaizqmutilde} follows by
\begin{align*}
&\widetilde{\mu}_{H}\circ(\eta_{H}\otimes H)\\
&\qquad{}=\mu_{H}\circ((\Psi_{H}\circ\eta_{H})\otimes(\mathfrak{m}_{H}\circ(\eta_{H}\otimes H)))\ \text{\footnotesize\textnormal{(by the condition of coalgebra morphism for $\eta_{H}$)}}\\
&\qquad{}=\mu_{H}\circ(\eta_{H}\otimes H)\ \text{\footnotesize\textnormal{(by (iii) of Definition \ref{WTRB} and \eqref{frakmHeta})}}\\
&\qquad{}={\rm id}_{H}\ \text{\footnotesize\textnormal{(by the unit property)}}.\tag*{\qed}
\end{align*}
\renewcommand{\qed}{}
\end{proof}

\begin{Remark}
Note that, thanks to \eqref{mHtildemuH} and previous theorem, $(H,\mathfrak{m}_{H})$ is a non-unital left $\widetilde{H}$-module. Then, if ${\sf C}$ is symmetric, we can say that $(H,\mathfrak{m}_{H})$ is in the cocommutativity class of~\smash{$\widetilde{H}$} when \eqref{classcocomfrakmH} holds.
\end{Remark}

\begin{Theorem}\label{funOmega}
Let
\[
\Biggl(T\begin{array}{c}H\\\downarrow\\B\end{array},\varphi_{H},\Psi_{H}\Biggr)
\]
 be a weak twisted relative Rota--Baxter operator such that \eqref{classcocomfrakmH} holds. Then, the triple $\widetilde{\mathbb{H}}=\bigl(H,\widetilde{H},\Psi_{H}\bigr)$ is an object in $\sf{HTr}^{\star}$.
\end{Theorem}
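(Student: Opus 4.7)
The plan is to mirror the proof of Theorem \ref{Prop: Weak-nuHTr}, exploiting the close parallel between the axioms of a weak twisted post-Hopf algebra and those of a weak twisted relative Rota--Baxter operator. Since the previous theorem already shows that $\widetilde{H}$ is a non-unital bialgebra whenever \eqref{classcocomfrakmH} holds, and since $\Psi_{H}$ is a coalgebra morphism by Definition \ref{WTRB}, items (i) and (ii) of Definition \ref{H-truss} are immediate. The remaining tasks are: (a) verify the compatibility (iii) of Definition \ref{H-truss}, and (b) check that the resulting Hopf truss lies in $\sf{HTr}^{\star}$, i.e.\ that \eqref{classcocomGH} holds.

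First, I would establish the analog of \eqref{GammaPhi=mH}, namely $\Gamma_{H_{1}}^{\Psi_{H}} = \mathfrak{m}_{H}$. Unfolding the definition of $\Gamma_{H_{1}}^{\Psi_{H}} = \mu_{H}\circ((\lambda_{H}\circ\Psi_{H})\otimes\widetilde{\mu}_{H})\circ(\delta_{H}\otimes H)$ together with $\widetilde{\mu}_{H}=\mu_{H}\circ(\Psi_{H}\otimes\mathfrak{m}_{H})\circ(\delta_{H}\otimes H)$, associativity of $\mu_{H}$ and coassociativity of $\delta_{H}$ give
\begin{equation*}
\Gamma_{H_{1}}^{\Psi_{H}} = \mu_{H}\circ\bigl(((\lambda_{H}\circ\Psi_{H})\ast\Psi_{H})\otimes\mathfrak{m}_{H}\bigr)\circ(\delta_{H}\otimes H).
\end{equation*}
Because $\Psi_{H}$ is a coalgebra morphism, $(\lambda_{H}\circ\Psi_{H})\ast\Psi_{H} = (\lambda_{H}\ast\mathrm{id}_{H})\circ\Psi_{H}$, which by \eqref{antipode} and the counit preservation of $\Psi_{H}$ equals $\eta_{H}\circ\varepsilon_{H}$. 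The (co)unit axioms then collapse the display to $\mathfrak{m}_{H}$, in verbatim analogy with \eqref{GammaPhi=mH}.

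With this identity in hand, verifying (iii) of Definition \ref{H-truss} reduces to showing
\begin{equation*}
\widetilde{\mu}_{H}\circ(H\otimes\mu_{H}) = \mu_{H}\circ(\widetilde{\mu}_{H}\otimes\mathfrak{m}_{H})\circ(H\otimes c_{H,H}\otimes H)\circ(\delta_{H}\otimes H\otimes H).
\end{equation*}
Expanding $\widetilde{\mu}_{H}$ inside the right-hand side and reorganising via associativity of $\mu_{H}$, coassociativity of $\delta_{H}$, and naturality of the braiding rewrites the right-hand side as
\begin{equation*}
\mu_{H}\circ\bigl(\Psi_{H}\otimes(\mu_{H}\circ(\mathfrak{m}_{H}\otimes\mathfrak{m}_{H})\circ(H\otimes c_{H,H}\otimes H)\circ(\delta_{H}\otimes H\otimes H))\bigr)\circ(\delta_{H}\otimes H\otimes H).
\end{equation*}
The inner composite is precisely $\mathfrak{m}_{H}\circ(H\otimes\mu_{H})$ by \eqref{mumH}, so the whole expression collapses to $\widetilde{\mu}_{H}\circ(H\otimes\mu_{H})$. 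This is a diagrammatic transcription of the corresponding chain of identities in the proof of Theorem \ref{Prop: Weak-nuHTr}, with $\Phi_{H}, m_{H}, \overline{\mu}_{H}$ replaced by $\Psi_{H}, \mathfrak{m}_{H}, \widetilde{\mu}_{H}$ respectively, and (v) of Definition \ref{WTPH} replaced by \eqref{mumH}.

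Finally, the defining condition \eqref{classcocomGH} of the subcategory $\sf{HTr}^{\star}$ applied to $\widetilde{\mathbb{H}}$ becomes, after substituting $\Gamma_{H_{1}}^{\Psi_{H}} = \mathfrak{m}_{H}$, literally the standing hypothesis \eqref{classcocomfrakmH}, so it is free. I expect the only delicate point to be the bookkeeping with the braiding and coassociativity in the first step; but since that step mirrors \eqref{GammaPhi=mH} line by line, no genuinely new obstacle arises.
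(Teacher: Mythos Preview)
Your proposal is correct and follows essentially the same route as the paper's own proof: establish $\Gamma_{H}^{\Psi_{H}}=\mathfrak{m}_{H}$ via the coalgebra-morphism property of $\Psi_{H}$ and \eqref{antipode}, then deduce (iii) of Definition \ref{H-truss} from \eqref{mumH} after the associativity/coassociativity reorganisation, and finally read off \eqref{classcocomGH} as \eqref{classcocomfrakmH}. The paper also explicitly notes, as you do implicitly, that the non-unital bialgebra structure on $\widetilde{H}$ comes from the preceding theorem.
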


\begin{proof}
At first, note that
\begin{equation}\label{GammaPsiH}
\Gamma_{H}^{\Psi_{H}}=\mathfrak{m}_{H}.
\end{equation}

Indeed,
\begin{align*}
\Gamma_{H}^{\Psi_{H}}
&{}=\mu_{H}\circ((\mu_{H}\circ((\lambda_{H}\circ\Psi_{H})\otimes\Psi_{H})\circ\delta_{H})\otimes\mathfrak{m}_{H})\circ(\delta_{H}\otimes H)\\
&\quad{}\, \text{\footnotesize\textnormal{(by associativity of $\mu_{H}$ and coassociativity of $\delta_{H}$)}}\\
&{}=\mu_{H}\circ((\mu_{H}\circ(\lambda_{H}\otimes H)\circ\delta_{H}\circ\Psi_{H})\otimes\mathfrak{m}_{H})\circ(\delta_{H}\otimes H)\\
&\quad{}\, \text{\footnotesize\textnormal{(by the condition of coalgebra morphism for $\Psi_{H}$)}}\\
&{}=\mu_{H}\circ((\eta_{H}\circ\varepsilon_{H}\circ\Psi_{H})\otimes\mathfrak{m}_{H})\circ(\delta_{H}\otimes H)\ \text{\footnotesize\textnormal{(by \eqref{antipode})}}\\
&{}=\mathfrak{m}_{H}\ \text{\footnotesize\textnormal{(by the condition of coalgebra morphism for $\Psi_{H}$ and (co)unit properties)}}.
\end{align*}

Thus, to conclude that $\widetilde{\mathbb{H}}$ is an object in $\sf{HTr}^{\star}$ it is enough to compute (iii) of Definition \ref{H-truss}. Indeed,
\begin{gather*}
\mu_{H}\circ\bigl(\widetilde{\mu}_{H}\otimes\Gamma_{H}^{\Psi_{H}}\bigr)\circ(H\otimes c_{H,H}\otimes H)\circ(\delta_{H}\otimes H\otimes H)\\
\quad{}=\mu_{H}\circ((\mu_{H}\circ(\Psi_{H}\otimes\mathfrak{m}_{H})\circ(\delta_{H}\otimes H))\otimes \mathfrak{m}_{H})\circ(H\otimes c_{H,H}\otimes H)\circ(\delta_{H}\otimes H\otimes H)\\
\qquad{}\;\text{\footnotesize\textnormal{(by \eqref{GammaPsiH})}}\\
\quad{}=\mu_{H}\circ(\Psi_{H}\otimes(\mu_{H}\circ(\mathfrak{m}_{H}\otimes\mathfrak{m}_{H})\circ(H\otimes c_{H,H}\otimes H)\circ(\delta_{H}\otimes H\otimes H)))\circ(\delta_{H}\otimes H\otimes H)\\
\qquad{}\; \text{\footnotesize\textnormal{(by associativity of $\mu_{H}$ and coassociativity of $\delta_{H}$)}}\\
\quad{}=\widetilde{\mu}_{H}\circ(H\otimes\mu_{H})\ \text{\footnotesize\textnormal{(by \eqref{mumH})}}.\tag*{\qed}
\end{gather*}
\renewcommand{\qed}{}
\end{proof}

The previous result can be interpreted as follows: If we denote by ${\sf wtr}$-${\sf RB}^{\star}$ to the full subcategory of ${\sf wtr}$-${\sf RB}$ satisfying the condition \eqref{classcocomfrakmH}, there exists a functor
$\Omega\colon {\sf wtr}\textnormal{-}{\sf RB}^{\star}\longrightarrow \sf{HTr}^{\star}$
defined on objects by
\[
\Omega\Biggl(\Biggl(T\begin{array}{c}H\\\downarrow\\B\end{array},\varphi_{H},\Psi_{H}\Biggr)\Biggr)=\widetilde{\mathbb{H}}
\]
and on morphisms by $\Omega((f,g))=f$. Note that $\Omega$ is well defined on morphisms because
\begin{gather*}
f\circ\widetilde{\mu}_{H}\\
\qquad{}=\mu_{H'}\circ((f\circ\Psi_{H})\otimes(f\circ\mathfrak{m}_{H}))\circ(\delta_{H}\otimes H)\ \text{\footnotesize\textnormal{(by the condition of algebra morphism for $f$)}}\\
\qquad{}=\mu_{H'}\circ((\Psi_{H'}\circ f)\otimes(\mathfrak{m}_{H'}\circ(f\otimes f)))\circ(\delta_{H}\otimes H)\ \text{\footnotesize\textnormal{(by \eqref{mor1twrRb} and \eqref{mor2twrRb})}}\\
\qquad{}=\mu_{H'}\circ(\Psi_{H'}\otimes\mathfrak{m}_{H'})\circ((\delta_{H'}\circ f)\otimes f)\ \text{\footnotesize\textnormal{(by the condition of coalgebra morphism for $f$)}}\\
\qquad{}=\widetilde{\mu}_{H'}\circ(f\otimes f).
\end{gather*}
\begin{Theorem}\label{funLambda}
Let $\mathbb{H}=(H_{1},H_{2},\sigma_{H})$ be an object in $\sf{HTr}^{\star}$. Then, the triple
\[
\Biggl({\rm id}_{H}\begin{array}{c}H_{1}\\\downarrow\\H_{2}\end{array},\Gamma_{H_{1}}^{\sigma_{H}},\sigma_{H}\Biggr)
\]
 is an object in ${\sf wtr}\textnormal{-}{\sf RB}^{\star}$. As a consequence, there exists a functor
$\Lambda\colon \sf{HTr}^{\star}\longrightarrow {\sf wtr}\textnormal{-}{\sf RB}^{\star}$
acting on objects by
\[
\Lambda(\mathbb{H})=\Biggl({\rm id}_{H}\begin{array}{c}H_{1}\\\downarrow\\H_{2}\end{array},\Gamma_{H_{1}}^{\sigma_{H}},\sigma_{H}\Biggr)
\]
and on morphisms by $\Lambda(f)=(f,f)$.
\end{Theorem}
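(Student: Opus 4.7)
The plan is to observe that with the choices $T={\rm id}_H$, $\Psi_H=\sigma_H$ and $\varphi_H=\Gamma_{H_1}^{\sigma_H}$, the auxiliary morphism $\mathfrak{m}_H$ from Definition \ref{WTRB} reduces to $\mathfrak{m}_H=\Gamma_{H_1}^{\sigma_H}\circ({\rm id}_H\otimes H)=\Gamma_{H_1}^{\sigma_H}$. Consequently, every axiom of a weak twisted relative Rota--Baxter operator translates directly into a statement about the Hopf truss structure of $\mathbb{H}$, and essentially all the needed work is already recorded earlier in the text.

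First I would check that $\bigl(H_1,\Gamma_{H_1}^{\sigma_H}\bigr)$ is a non-unital left $H_2$-module algebra-coalgebra. The module algebra part is precisely \cite[Theorem~6.5]{BRZ1}, cited just after formula \eqref{cocycle truss 2}. The module coalgebra condition reduces to $\Gamma_{H_1}^{\sigma_H}$ being a coalgebra morphism, and this was already established inside the proof of Theorem \ref{functorG} (making use of \eqref{classcocomGH}). The hypotheses on $T$ and $\Psi_H$ are immediate: ${\rm id}_H$ is trivially a coalgebra morphism, and $\sigma_H$ is a coalgebra morphism by Definition \ref{H-truss}(iii).

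Next I would verify conditions (i) and (ii) of Definition \ref{WTRB}. Condition (i) asks for the equality $\mu_H^2=\mu_H^1\circ\bigl(\sigma_H\otimes\Gamma_{H_1}^{\sigma_H}\bigr)\circ(\delta_H\otimes H)$, which is precisely \eqref{mu2Htruss}. After substituting \eqref{mu2Htruss} on both sides, condition (ii) collapses to $\sigma_H\circ\mu_H^2=\mu_H^2\circ(H\otimes\sigma_H)$, which is exactly \eqref{cocycle truss 2}. Finally, the side requirement \eqref{classcocomfrakmH} becomes, under the identification $\mathfrak{m}_H=\Gamma_{H_1}^{\sigma_H}$, literally \eqref{classcocomGH}, so the triple lies in ${\sf wtr}\textnormal{-}{\sf RB}^\star$ as soon as $\mathbb{H}\in\sf{HTr}^\star$.

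For the functor $\Lambda$, given a morphism $f\colon \mathbb{H}\to\mathbb{B}$ in $\sf{HTr}^\star$, I would verify that $(f,f)$ is a morphism in ${\sf wtr}\textnormal{-}{\sf RB}^\star$: condition \eqref{mor0twrRb} is trivial since both operators are identities, condition \eqref{mor1twrRb} is exactly \eqref{Cond mor truss} which holds for every Hopf truss morphism, and condition \eqref{mor2.0twrRb} is the equality $f\circ\Gamma_{H_1}^{\sigma_H}=\Gamma_{B_1}^{\sigma_B}\circ(f\otimes f)$ that was already carried out in \eqref{gammatrussf}. Functoriality with respect to composition and identities is then automatic. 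Because every step reduces to an identity already proved earlier in the paper, I do not expect any genuine obstacle; the content of the statement is conceptual rather than computational, namely that the identity morphism on a Hopf truss is canonically a weak twisted relative Rota--Baxter operator, giving the desired inverse-direction functor to $\Omega$.
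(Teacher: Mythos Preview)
Your proposal is correct and follows essentially the same approach as the paper: you reduce the module algebra-coalgebra structure to \cite[Theorem~6.5]{BRZ1} and Theorem~\ref{functorG}, obtain condition~(i) from \eqref{mu2Htruss}, obtain condition~(ii) by combining \eqref{mu2Htruss} with \eqref{cocycle truss 2}, and handle the morphism part via \eqref{Cond mor truss} and \eqref{gammatrussf}. The only minor addition in your write-up is the explicit remark that \eqref{classcocomfrakmH} specializes to \eqref{classcocomGH}, which the paper leaves implicit.
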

\begin{proof}
Firstly, it is already known that \smash{$\bigl(H_{1},\Gamma_{H_{1}}^{\sigma_{H}}\bigr)$} is a non-unital left $H_{2}$-module algebra. Moreover, thanks to \eqref{classcocomGH}, \smash{$\Gamma_{H_{1}}^{\sigma_{H}}$} is a coalgebra morphism as we have just proved in Theorem~\ref{functorG}. As a consequence, \smash{$\bigl(H_{1},\Gamma_{H_{1}}^{\sigma_{H}}\bigr)$} is a non-unital left $H_{2}$-module algebra-coalgebra.

Moreover, ${\rm id}_{H}\colon H_{1}\rightarrow H_{2}$ is a coalgebra morphism because both have the same underlying coalgebra structure, and also $\sigma_{H}$ due to Hopf truss' axioms.

So, to show that $\Lambda$ is well defined on objects it is enough to see that conditions (i) and (ii) of Definition \ref{WTRB} hold. In this context, equation (i) of Definition \ref{WTRB} becomes
\[
\mu_{H}^{2}=\mu_{H}^{1}\circ\bigl(\sigma_{H}\otimes\Gamma_{H_{1}}^{\sigma_{H}}\bigr)\circ(\delta_{H}\otimes H)
\]
 that holds due to \eqref{mu2Htruss}. On the other hand, (ii) of Definition \ref{WTRB} follows by
\begin{align*}
\sigma_{H}\circ\mu_{H}^{1}\circ\bigl(\sigma_{H}\otimes\Gamma_{H_{1}}^{\sigma_{H}}\bigr)\circ(\delta_{H}\otimes H)
={}&\sigma_{H}\circ\mu_{H}^{2}\ \text{\footnotesize\textnormal{(by \eqref{mu2Htruss})}}\\
={}&\mu_{H}^{2}\circ(H\otimes\sigma_{H})\ \text{\footnotesize\textnormal{(by \eqref{cocycle truss 2})}}\\
={}&\mu_{H}^{1}\circ\bigl(\sigma_{H}\otimes\Gamma_{H_{1}}^{\sigma_{H}}\bigr)\circ(\delta_{H}\otimes\sigma_{H})\ \text{\footnotesize\textnormal{(by \eqref{mu2Htruss})}}.
\end{align*}

In addition, by \eqref{Cond mor truss} and \eqref{gammatrussf}, $\Lambda$ is well defined on morphisms too, what concludes the proof.
\end{proof}

\begin{Example}
Following \cite[Definition 3.5]{LW2}, given a Hopf algebra $D=(D,\eta_{D},\mu_{D},\varepsilon_{D},\delta_{D},\lambda_{D})$ and a Hopf algebra endomorphism $\phi\colon D\rightarrow D$, a $\phi$-twisted operator is a coalgebra morphism $\Upsilon \colon D\rightarrow D$ such that the equation
\begin{equation*}
	\mu_{D}\circ(\Upsilon\otimes\Upsilon)=\Upsilon\circ\mu_{D}\circ((\mu_{D}\circ(\Upsilon\otimes D))\otimes (\lambda_{D}\circ \phi\circ \Upsilon))\circ(D\otimes c_{D,D})\circ(\delta_{D}\otimes D),
\end{equation*}
i.e.,
\[
\includegraphics[scale=2.5]{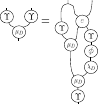}
\]
holds.

If $q:D\rightarrow D$ is a coalgebra morphism satisfying
\begin{equation}\label{condBidemp}
	\mu_{D}\circ(q\otimes q)=q\circ\mu_{D}\circ (q\otimes D),
\end{equation}
we have that $q$ is a $\phi_{q}$-twisted operator, where $\phi_{q}=\eta_{D}\circ \varepsilon_{D}$, and ${\mathbb D}_{q}=\bigl(D, D_{q}, \sigma_{D}^{q}\bigr)$ is a Hopf truss where $D_{q}$ is the non-unital bialgebra
\begin{gather*}
	D_{q}=\bigl(D,\mu_{D}^q=\mu_{D}\circ (q\otimes D),\varepsilon_{D},\delta_{D}\bigr),
\end{gather*}
and $\sigma_{D}^{q}=q$. Note that in this case $\Gamma_{D}^q=\varepsilon_{D}\otimes D$. As a consequence ${\mathbb D}_{q}$ is an object in $\sf{HTr}^{\star}$. Then, by Theorem \ref{funOmega}, the triple
\[
\Biggl({\rm id}_{D}\begin{array}{c}D\\\downarrow\\D_{q}\end{array},\Gamma_{D}^q=\varepsilon_{D}\otimes D,\sigma_{D}^q=q\Biggr)
\]
is a weak twisted relative Rota--Baxter operator satisfying condition \eqref{classcocomfrakmH}.

In this setting, a family of morphisms $q\colon D\rightarrow D$ satisfying \eqref{condBidemp} is made up of idempotent Hopf algebra endomorphisms of $D$. So, we can construct examples of Hopf trusses in $\sf{HTr}^{\star}$ and weak twisted relative Rota--Baxter operators satisfying \eqref{classcocomfrakmH}, working with idempotent Hopf algebra endomorphisms of $D$. For example, if $D$ is Hopf algebra that factorizes by the semidirect product of two Hopf algebras $A$, $H$ and $\omega_{D}\colon A\ltimes H\rightarrow D$ is the corresponding isomorphism of Hopf algebras, then
\[q=\omega_{D}\circ \bigl(\eta_{A}\otimes \bigl((\varepsilon_{A}\otimes H)\circ \omega_{D}^{-1}\bigr)\bigr)\colon D\rightarrow D\]
is an idempotent morphism of Hopf algebras. Remember that, in the particular case of groups (cocommutative Hopf algebras in {\sf Set}), it is well known that the set of idempotent endomorphisms~$q$ of a group $D$ are in one-to-one correspondence with the semidirect-product decompositions $A\ltimes H$ of $D$ where $A=\operatorname{Ker}(f)$, $H= q(D)$ and $\varphi_{A}\colon H\times A\rightarrow A$ is the adjoint action of $H$ on~$A$, i.e., $\varphi_A(h,a)=h a h^{-1}$. The operation on $A\ltimes H$ is defined by $(a,h)(b,l)=(a\varphi_{A}(h,b), hl)$.

On the other hand, if $D$ is cocommutative and $\Upsilon\colon D\rightarrow D$ is a $\phi$-twisted operator, then, by~\mbox{\cite[Proposition 3.6]{LW2}}, the triple $(D,\Upsilon,\phi\circ \Upsilon)$ is a Rota--Baxter system and, as a consequence, applying \cite[Proposition 3.8]{LW2} it is obtained that $\mathbb{D}_{\Upsilon}=\bigl(D,D_{\Upsilon},\sigma_{D}^{\Upsilon}\bigr)$ is a cocommutative Hopf truss, where
\begin{gather*}
	\sigma_{D}^{\Upsilon}\coloneqq\mu_{D}\circ (\Upsilon\otimes(\lambda_{D}\circ\phi\circ \Upsilon))\circ\delta_{D}=\Upsilon\ast(\lambda_{D}\circ\phi\circ \Upsilon)
\end{gather*}
and $H_{\Upsilon}$ is the non-unital bialgebra
\begin{gather*}
	H_{\Upsilon}=(H,\mu_{\Upsilon},\varepsilon_{D},\delta_{D}),
\end{gather*}
being
\begin{gather*}
	\mu_{\Upsilon}\coloneqq\mu_{D}\circ((\mu_{D}\circ(\Upsilon\otimes D))\otimes(\lambda_{D}\circ \phi\circ \Upsilon))\circ(D\otimes c_{D,D})\circ(\delta_{D}\otimes D),
\end{gather*}
i.e.,
\[
\includegraphics[scale=3]{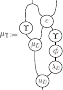}
\]

Thus, by Theorem \ref{funLambda}, the triple
\[
\Biggl({\rm id}_{D}\begin{array}{c}D\\\downarrow\\D_{\Upsilon}\end{array},\Gamma_{D}^{\sigma_{D}^{\Upsilon}},\sigma_{D}^{\Upsilon}\Biggr)
\]
is a weak twisted relative Rota--Baxter operator, where it is straightforward to compute that under these conditions
\[
\Gamma_{D}^{\sigma_{D}^{\Upsilon}}=\varphi_{D}^{{\rm ad}}\circ((\phi\circ \Upsilon)\otimes D).
\]
\end{Example}

The main results of this section are presented below. The first one is the following.

\begin{Theorem}\label{adjpair}
The functor $\Lambda$ is left adjoint to the functor $\Omega$.
\end{Theorem}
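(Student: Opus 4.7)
The plan is to construct the adjunction explicitly by exhibiting, for each $\mathbb{H}=(H_1,H_2,\sigma_H)$ in $\sf{HTr}^{\star}$ and each $Y=(T\colon H'\to B,\varphi_{H'},\Psi_{H'})$ in ${\sf wtr}\textnormal{-}{\sf RB}^{\star}$, a natural bijection between $\Hom_{{\sf wtr}\textnormal{-}{\sf RB}^{\star}}(\Lambda(\mathbb{H}),Y)$ and $\Hom_{\sf{HTr}^{\star}}(\mathbb{H},\Omega(Y))$. The first step is to observe, using \eqref{mu2Htruss} and \eqref{GammaPsiH}, that $\Omega\circ\Lambda={\rm id}_{\sf{HTr}^{\star}}$: in $\Lambda(\mathbb{H})$ one has $\mathfrak{m}_H=\Gamma_{H_1}^{\sigma_H}$, hence the twisted product of $\Omega(\Lambda(\mathbb{H}))$ is $\mu_H^1\circ(\sigma_H\otimes\Gamma_{H_1}^{\sigma_H})\circ(\delta_H\otimes H)=\mu_H^2$. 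This suggests taking the forward map to be $(f,g)\mapsto \Omega((f,g))=f$ with inverse $f\mapsto(f,T\circ f)$.

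The core verification is that $(f,T\circ f)$ is a morphism in ${\sf wtr}\textnormal{-}{\sf RB}^{\star}$ whenever $f\colon\mathbb{H}\to\Omega(Y)$ is a Hopf truss morphism. Condition \eqref{mor0twrRb} is tautological, \eqref{mor1twrRb} is \eqref{Cond mor truss} applied to $f$, and \eqref{mor2.0twrRb} reduces to $f\circ\Gamma_{H_1}^{\sigma_H}=\mathfrak{m}_{H'}\circ(f\otimes f)$, which combines \eqref{gammatrussf} with \eqref{GammaPsiH}. The key compatibility is that $T\circ f\colon H_2\to B$ is a morphism of non-unital bialgebras: the coalgebra part is inherited from $T$ and $f$, and the algebra part follows from
\[
(T\circ f)\circ\mu_H^2=T\circ\widetilde{\mu}_{H'}\circ(f\otimes f)=\mu_B\circ(T\otimes T)\circ(f\otimes f)=\mu_B\circ((T\circ f)\otimes(T\circ f)),
\]
where the first equality uses that $f\colon H_2\to\widetilde{H'}$ is a non-unital bialgebra morphism and the second is condition (i) of Definition \ref{WTRB}.

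Injectivity of the forward map follows at once from \eqref{mor0twrRb}, which forces $g=T\circ f$, so the two assignments are mutually inverse. Naturality of the bijection in both variables is routine: each diagram reduces componentwise either to the equalities defining how $\Omega$ and $\Lambda$ act on morphisms, or to the relation $g\circ T=T'\circ f$ from \eqref{mor0twrRb}. There is no genuine obstacle here, since essentially all the technical work has been absorbed into Theorems \ref{funOmega} and \ref{funLambda} together with the strong identity $\Omega\circ\Lambda={\rm id}_{\sf{HTr}^{\star}}$, which forces the unit of the adjunction to be the identity natural transformation and leaves only a bookkeeping check of the inverse correspondence.
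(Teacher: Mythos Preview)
Your proposal is correct and follows essentially the same approach as the paper: both construct the natural bijection $f\leftrightarrow(f,T\circ f)$ and perform the same key checks --- that $T\circ f$ is a morphism of non-unital bialgebras via condition (i) of Definition~\ref{WTRB}, that \eqref{mor1twrRb} is \eqref{Cond mor truss}, and that \eqref{mor2.0twrRb} follows from \eqref{gammatrussf} combined with \eqref{GammaPsiH}. The only minor difference is that for the forward direction (from $(x,y)$ to $x$) you invoke $\Omega\circ\Lambda={\rm id}_{\sf{HTr}^{\star}}$ and functoriality of $\Omega$, whereas the paper verifies directly that $x\colon H_{2}\to\widetilde{B}$ is a non-unital bialgebra morphism; your shortcut is a legitimate economy since that computation is precisely what goes into the well-definedness of $\Omega$ on morphisms.
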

\begin{proof}
Let us construct a bijection
\begin{gather*}
{}^{\mathbb{H}}\Sigma_{T}\colon\ \Hom_{{\sf HTr}^{\star}}\Biggl(\mathbb{H},\Omega\Biggl(\Biggl(T\begin{array}{c}B\\\downarrow\\C\end{array},\varphi_{B},\Psi_{B}\Biggr)\Biggr)\Biggr)\longrightarrow \Hom_{{\sf wtr}\textnormal{-}{\sf RB}^{\star}}\Biggl(\Lambda(\mathbb{H}),\Biggl(T\begin{array}{c}B\\\downarrow\\C\end{array},\varphi_{B},\Psi_{B}\Biggr)\Biggr),
\end{gather*}
where
\[
\mathbb{H}=(H_{1},H_{2},\sigma_{H}) \qquad \text{and}\qquad \Biggl(T\begin{array}{c}B\\\downarrow\\C\end{array},\varphi_{B},\Psi_{B}\Biggr)
\]
 are arbitrary objects in ${\sf HTr}^{\star}$ and ${\sf wtr}\textnormal{-}{\sf RB}^{\star}$, respectively.

On the one hand, take
\[
f\colon \mathbb{H}\rightarrow \Omega\Biggl((T\begin{array}{c}B\\\downarrow\\C\end{array},\varphi_{B},\Psi_{B})\Biggr)=\widetilde{\mathbb{B}}=\bigl(B,\widetilde{B},\Psi_{B}\bigr)
\]
 a morphism in ${\sf HTr}^{\star}$. Let us see that $(f,T\circ f)$ is a morphism in ${\sf wtr}\textnormal{-}{\sf RB}^{\star}$ between the weak twisted relative Rota--Baxter operators \[
 \Lambda(\mathbb{H})=\Biggl({\rm id}_{H}\begin{array}{c}H_{1}\\\downarrow\\H_{2}\end{array},\Gamma_{H_{1}}^{\sigma_{H}},\sigma_{H}\Biggr)
 \qquad \text{and} \qquad \biggl(T\begin{array}{c}B\\\downarrow\\C\end{array},\varphi_{B},\Psi_{B}\Biggr).
 \]
 First of all note that $f\colon H_{1}\rightarrow B$ is a Hopf algebra morphism and $T\circ f\colon H_{2}\rightarrow C$ is a morphism of non-unital bialgebras because
\begin{align*}
&T\circ f\circ \mu_{H}^{2}\\
&\qquad{}=T\circ\widetilde{\mu}_{B}\circ (f\otimes f)\
\text{\footnotesize\textnormal{\big(by the condition of morphism of non-unital bialgebras for $f\colon H_{2}\rightarrow \widetilde{B}$\big)}}\\
&\qquad{}=\mu_{C}\circ((T\circ f)\otimes(T\circ f))\ \text{\footnotesize\textnormal{(by (i) of Definition \ref{WTRB})}}.
\end{align*}

Moreover, it is direct to compute that \eqref{mor0twrRb} holds in this case and \eqref{mor1twrRb} follows by \eqref{Cond mor truss}. Finally, we still have to check \eqref{mor2.0twrRb}. Indeed,
\begin{align*}
f\circ \Gamma_{H_{1}}^{\sigma_{H}}
={}&\mu_{B}\circ\bigl((f\circ\lambda_{H}\circ \sigma_{H})\otimes\bigl(f\circ\mu_{H}^{2}\bigr)\bigr)\circ(\delta_{H}\otimes H)\\
&{} \text{\footnotesize\textnormal{(by the condition of algebra morphism for $f\colon H_{1}\rightarrow B$)}}\\
={}&\mu_{B}\circ((\lambda_{B}\circ\Psi_{B}\circ f)\otimes (\widetilde{\mu}_{B}\circ(f\otimes f)))\circ(\delta_{H}\otimes H)\\
&{} \text{\footnotesize\textnormal{\big(by \eqref{morant}, \eqref{Cond mor truss} and the condition of morphism of non-unital bialgebras for $f\colon H_{2}\rightarrow \widetilde{B}$\big)}}\\
={}&\Gamma_{B}^{\Psi_{B}}\circ(f\otimes f)\\
&{} \text{\footnotesize\textnormal{\big(by the condition of coalgebra morphism for $f$ and definition of $\Gamma_{B}^{\Psi_{B}}$ for the Hopf truss $\widetilde{\mathbb{B}}$\big)}}\\
={}&\mathfrak{m}_{B}\circ(f\otimes f)\ \text{\footnotesize\textnormal{(by \eqref{GammaPsiH})}}\\
={}&\varphi_{B}\circ((T\circ f)\otimes f).
\end{align*}

So, we define
\[{}^{\mathbb{H}}\Sigma_{T}(f)=(f,T\circ f).\]

On the other hand, consider $(x,y)$ a morphism in ${\sf wtr}\textnormal{-}{\sf RB}^{\star}$ between the weak twisted relative Rota--Baxter
\[
\Lambda(\mathbb{H})=\Biggl({\rm id}_{H}\begin{array}{c}H_{1}\\\downarrow\\H_{2}\end{array},\Gamma_{H_{1}}^{\sigma_{H}},\sigma_{H}\biggr)
\qquad \text{and} \qquad \Biggl(T\begin{array}{c}B\\\downarrow\\C\end{array},\varphi_{B},\Psi_{B}\Biggr).
\]
 We will prove that $x$ is a morphism between the Hopf trusses $\mathbb{H}$ and $\widetilde{\mathbb{B}}$. To do this, it is sufficient to compute that $x\colon H_{2}\rightarrow \widetilde{B}$ is a morphism of non-unital bialgebras, what follows by
\begin{align*}
\widetilde{\mu}_{B}\circ(x\otimes x)
={}&\mu_{B}\circ((\Psi_{B}\circ x)\otimes(\varphi_{B}\circ((T\circ x)\otimes x)))\circ(\delta_{H}\otimes H)\\
&{}\text{\footnotesize\textnormal{(by the condition of coalgebra morphism for $x$)}}\\
={}&\mu_{B}\circ((x\circ \sigma_{H})\otimes(\varphi_{B}\circ(y\otimes x)))\circ(\delta_{H}\otimes H)\ \text{\footnotesize\textnormal{(by \eqref{mor0twrRb} and \eqref{mor1twrRb})}}\\
={}&\mu_{B}\circ(x\otimes x)\circ\bigl(\sigma_{H}\otimes\Gamma_{H_{1}}^{\sigma_{H}}\bigr)\circ(\delta_{H}\otimes H)\ \text{\footnotesize\textnormal{(by \eqref{mor2.0twrRb})}}\\
={}&x\circ\mu_{H}^{1}\circ\bigl(\sigma_{H}\otimes \Gamma_{H_{1}}^{\sigma_{H}}\bigr)\circ(\delta_{H}\otimes H)\\
&{}\text{\footnotesize\textnormal{(by the condition of algebra morphism for $x\colon H_{1}\rightarrow B$)}}\\
={}&x\circ\mu_{H}^{2}\ \text{\footnotesize\textnormal{(by \eqref{mu2Htruss})}}.
\end{align*}

Thus, let us define
\begin{gather*}
{}^{\mathbb{H}}\Theta_{T}\colon \Hom_{{\sf wtr}\textnormal{-}{\sf RB}^{\star}}\Biggl(\Lambda(\mathbb{H}),\Biggl(T\begin{array}{c}B\\\downarrow\\C\end{array},\varphi_{B},\Psi_{B}\Biggr)\Biggr)\longrightarrow \Hom_{{\sf HTr}^{\star}}\Biggl(\mathbb{H},\Omega\Biggl(\Biggl(T\begin{array}{c}B\\\downarrow\\C\end{array},\varphi_{B},\Psi_{B}\Biggr)\Biggr)\Biggr)
\end{gather*}
by ${}^{\mathbb{H}}\Theta_{T}((x,y))=x$.

To conclude we have to prove that ${}^{\mathbb{H}}\Theta_{T}$ is the inverse of ${}^{\mathbb{H}}\Sigma_{T}$. Indeed,
\begin{align*}
\bigl({}^{\mathbb{H}}\Theta_{T}\circ {}^{\mathbb{H}}\Sigma_{T}\bigr)(f)={}^{\mathbb{H}}\Theta_{T}((f, T\circ f))=f,
\end{align*}
and
\begin{align*}
\bigl({}^{\mathbb{H}}\Sigma_{T}\circ{}^{\mathbb{H}}\Theta_{T}\bigr)(x,y)={}^{\mathbb{H}}\Sigma_{T}(x)=(x,T\circ x)=(x,y)\ \text{\footnotesize\textnormal{(by \eqref{mor0twrRb} for the morphism $(x,y)$)}}.
\tag*{\qed}
\end{align*}
\renewcommand{\qed}{}
\end{proof}

Consider the full subcategory of ${\sf wtr}\textnormal{-}{\sf RB}^{\star}$ consisting of all weak twisted relative Rota--Baxter operators satisfying \eqref{classcocomfrakmH},
\[
\Biggl(T\begin{array}{c}H\\\downarrow\\B\end{array},\varphi_{H},\Psi_{H}\biggr),
\]
 such that $T$ is an isomorphism in ${\sf C}$. As a first consequence, when we work with this subcategory, note that, by (i) of Definition \ref{WTRB}, $\mu_{B}$ admits an expression in terms of $\widetilde{\mu}_{H}$ given by
\begin{equation*}
\mu_{B}=T\circ \widetilde{\mu}_{H}\circ\bigl(T^{-1}\otimes T^{-1}\bigr).
\end{equation*}

We will denote this subcategory by ${\sf wtr}\textnormal{-}{\sf RB}^{\star}_{{\sf iso}}$. Moreover, take into account that the image of the functor $\Lambda$ always lives in this subcategory because, for any $\mathbb{H}\in {\sf HTr}^{\star}$, ${\rm id}_{H}\colon H_{1}\rightarrow H_{2}$ is always an isomorphism in ${\sf C}$. As a result,
$\Lambda\colon \sf{HTr}^{\star}\longrightarrow {\sf wtr}\textnormal{-}{\sf RB}^{\star}_{{\sf iso}}.$

Thus, if we denote by $\Omega'$ the restriction of functor $\Omega$ to the subcategory ${\sf wtr}\textnormal{-}{\sf RB}^{\star}_{{\sf iso}}$, the following result states that $\Lambda$ and $\Omega'$ give rise to a categorical equivalence between ${\sf wtr}\textnormal{-}{\sf RB}^{\star}_{{\sf iso}}$ and~${\sf HTr}^{\star}$.
\begin{Theorem}\label{catequivHTr}
The categories $\sf{HTr}^{\star}$ and ${\sf wtr}$-${\sf RB}^{\star}_{{\sf iso}}$ are equivalent.
\end{Theorem}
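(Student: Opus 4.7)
The plan is to show that the functors $\Lambda$ and $\Omega'$ are mutually quasi-inverse, since an adjunction whose unit and counit are natural isomorphisms is a categorical equivalence. Indeed, the adjunction $\Lambda\dashv\Omega$ has already been established in Theorem \ref{adjpair}; restricting along the inclusion ${\sf wtr}\textnormal{-}{\sf RB}^{\star}_{{\sf iso}}\hookrightarrow {\sf wtr}\textnormal{-}{\sf RB}^{\star}$ preserves the unit and counit of this adjunction, so the task reduces to showing both become invertible.

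First I would verify that $\Omega'\circ \Lambda$ is exactly the identity functor on $\sf{HTr}^{\star}$. Given $\mathbb{H}=(H_{1},H_{2},\sigma_{H})$, the composite produces the Hopf truss $\bigl(H_{1}, \widetilde{H_{1}}, \sigma_{H}\bigr)$ whose second product is $\widetilde{\mu}_{H_{1}}=\mu_{H}^{1}\circ\bigl(\sigma_{H}\otimes\Gamma_{H_{1}}^{\sigma_{H}}\bigr)\circ(\delta_{H}\otimes H)$. By formula \eqref{mu2Htruss} this coincides with $\mu_{H}^{2}$, so $\widetilde{H_{1}}=H_{2}$ and $\Omega'\circ\Lambda=\mathrm{id}_{\sf{HTr}^{\star}}$ on the nose (the action on morphisms is the identity by construction). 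Thus the unit of the adjunction is the identity natural transformation, which is trivially a natural isomorphism.

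Next I would exhibit the counit explicitly and show it is a natural isomorphism. Given an object $\bigl(T\colon H\to B,\varphi_{H},\Psi_{H}\bigr)$ in ${\sf wtr}\textnormal{-}{\sf RB}^{\star}_{{\sf iso}}$, the composite $\Lambda\circ\Omega'$ sends it (using \eqref{GammaPsiH}) to $\bigl({\rm id}_{H}\colon H\to\widetilde{H},\mathfrak{m}_{H},\Psi_{H}\bigr)$. The candidate counit is the pair $\epsilon_{T}\coloneqq({\rm id}_{H},T)$, and I would check it is a morphism in ${\sf wtr}\textnormal{-}{\sf RB}^{\star}_{{\sf iso}}$: the fact that $T\colon \widetilde{H}\to B$ is a morphism of non-unital bialgebras is precisely condition (i) of Definition \ref{WTRB}, while \eqref{mor0twrRb}--\eqref{mor2.0twrRb} amount to the trivial identities $T\circ{\rm id}_{H}=T$, $\Psi_{H}={\rm id}_{H}\circ\Psi_{H}$, and $\mathfrak{m}_{H}=\varphi_{H}\circ(T\otimes H)$ (the definition of $\mathfrak{m}_{H}$).

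The crucial step is showing $\epsilon_{T}$ is invertible, and this is where the hypothesis $T\in\mathrm{Iso}(\sf{C})$ is used. Since $T$ is a coalgebra isomorphism, $T^{-1}$ is automatically a coalgebra morphism; composing both sides of $T\circ\widetilde{\mu}_{H}=\mu_{B}\circ(T\otimes T)$ with $T^{-1}$ yields $\widetilde{\mu}_{H}\circ(T^{-1}\otimes T^{-1})=T^{-1}\circ\mu_{B}$, proving $T^{-1}\colon B\to\widetilde{H}$ is a morphism of non-unital bialgebras. Hence $\bigl({\rm id}_{H},T^{-1}\bigr)$ is a two-sided inverse for $\epsilon_{T}$. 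Naturality of $\epsilon$ with respect to any morphism $(f,g)$ boils down to the identity $g\circ T=T'\circ f$, which is \eqref{mor0twrRb}. The main conceptual obstacle is nothing more than this bookkeeping: ensuring that the non-unital bialgebra structure $\widetilde{H}$ reconstructed by $\Omega'$ is transported back to the original structure of $B$ through the invertible coalgebra morphism $T$, and this is precisely what the Rota--Baxter-type identity (i) of Definition \ref{WTRB} guarantees.
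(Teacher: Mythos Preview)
Your proposal is correct and follows essentially the same route as the paper's proof: both compute $\Omega'\circ\Lambda={\sf id}_{\sf{HTr}^{\star}}$ via \eqref{mu2Htruss}, and both identify $({\rm id}_{H},T)$ as the isomorphism witnessing $\Lambda\circ\Omega'\simeq{\sf id}$. You add more detail than the paper---explicitly framing things through the adjunction of Theorem~\ref{adjpair}, verifying the morphism axioms for $({\rm id}_{H},T)$, and checking that $({\rm id}_{H},T^{-1})$ is its inverse---whereas the paper simply asserts the isomorphism and moves on.
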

\begin{proof}On the one hand, let $\mathbb{H}=(H_{1},H_{2},\sigma_{H})$ be an object in $\sf{HTr}^{\star}$. Then,
\[\Omega'\circ\Lambda={\sf id}_{\sf{HTr}^{\star}}.\]

Indeed,
\begin{align*}
\bigl(\Omega'\circ\Lambda\bigr)(\mathbb{H})=\Omega'\Biggl(\Biggl({\rm id}_{H}\begin{array}{c}H_{1}\\\downarrow\\H_{2}\end{array},\Gamma_{H_{1}}^{\sigma_{H}},\sigma_{H}\Biggr)\Biggr)=\bigl(H_{1},\widetilde{H}_{1},\sigma_{H}\bigr),
\end{align*}
where $\widetilde{\mu}_{H_{1}}=\mu_{H}^{2}$ by \eqref{mu2Htruss}. This implies that $\widetilde{H}_{1}=H_{2}$ and thus $\bigl(\Omega'\circ\Lambda\bigr)(\mathbb{H})=\mathbb{H}$.

On the other hand, we have that \[\Lambda\circ\Omega'\simeq{\sf id}_{{\sf wtr}\textnormal{-}{\sf RB}^{\star}_{{\sf iso}}}\]
because, if
\[
\Biggl(T\begin{array}{c}H\\\downarrow\\B\end{array},\varphi_{H},\Psi_{H}\Biggr)
\]
is an object in ${\sf wtr}$-${\sf RB}^{\star}_{{\sf iso}}$,
by \eqref{GammaPsiH},
\begin{align*}
(\Lambda\circ\Omega')\Biggl(\Biggl(T\begin{array}{c}H\\\downarrow\\B\end{array},\varphi_{H},\Psi_{H}\Biggr)\Biggr)
&{}=\Lambda\bigl(\bigl(H,\widetilde{H},\Psi_{H}\bigr)\bigr)\\
&{}=\Biggl({\rm id}_{H}\begin{array}{c}H\\\downarrow\\\widetilde{H}\end{array},\Gamma_{H}^{\Psi_{H}},\Psi_{H}\Biggr)
 =\Biggl({\rm id}_{H}\begin{array}{c}H\\\downarrow\\\widetilde{H}\end{array},\mathfrak{m}_{H},\Psi_{H}\Biggr),
\end{align*}
which is isomorphic to
\[
\Biggl(T\begin{array}{c}H\\\downarrow\\B\end{array},\varphi_{H},\Psi_{H}\Biggr)
\] in the category ${\sf wtr}$-${\sf RB}^{\star}_{{\sf iso}}$ via the isomorphism $({\rm id}_{H},T)$.
\end{proof}

If we denote by ${\sf coc}$-${\sf wtr}$-${\sf RB}_{{\sf iso}}$ to the full subcategory of ${\sf wtr}$-${\sf RB}$ whose objects are weak twisted relative Rota--Baxter operators,
\[
\Biggl(T\begin{array}{c}H\\\downarrow\\B\end{array},\varphi_{H},\Psi_{H}\Biggr),
\]
such that $T$ is an isomorphism and $H$ is cocommutative, then the following result is direct as a~consequence of the previous one.
\begin{Corollary}

The categories $\sf{HTr}^{\star}$, ${\sf wtr}$-${\sf RB}^{\star}_{{\sf iso}}$ and ${\sf wt}\textnormal{-}{\sf Post}\textnormal{-}{\sf Hopf}^{\star}$ are equivalent.
\end{Corollary}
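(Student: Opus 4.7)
The plan is to deduce the corollary by chaining together the two equivalences already established in the paper, exploiting the fact that an isomorphism of categories is in particular an equivalence of categories, and that categorical equivalence is a transitive relation.

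First I would invoke Theorem \ref{th-iso-wtph-htr}, which provides the isomorphism of categories
\[
F\colon {\sf wt}\textnormal{-}{\sf Post}\textnormal{-}{\sf Hopf}^{\star}\longrightarrow \sf{HTr}^{\star},\qquad G\colon \sf{HTr}^{\star}\longrightarrow {\sf wt}\textnormal{-}{\sf Post}\textnormal{-}{\sf Hopf}^{\star},
\]
with $G\circ F={\sf id}_{{\sf wt}\textnormal{-}{\sf Post}\textnormal{-}{\sf Hopf}^{\star}}$ and $F\circ G={\sf id}_{\sf{HTr}^{\star}}$. Since these strict equalities of functors are \emph{a fortiori} natural isomorphisms, $F$ and $G$ constitute an equivalence between $\sf{HTr}^{\star}$ and ${\sf wt}\textnormal{-}{\sf Post}\textnormal{-}{\sf Hopf}^{\star}$.

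Next, I would appeal to Theorem \ref{catequivHTr}, which supplies functors $\Lambda\colon \sf{HTr}^{\star}\rightarrow {\sf wtr}\textnormal{-}{\sf RB}^{\star}_{{\sf iso}}$ and $\Omega'\colon {\sf wtr}\textnormal{-}{\sf RB}^{\star}_{{\sf iso}}\rightarrow \sf{HTr}^{\star}$ together with the identities $\Omega'\circ\Lambda={\sf id}_{\sf{HTr}^{\star}}$ and $\Lambda\circ\Omega'\simeq{\sf id}_{{\sf wtr}\textnormal{-}{\sf RB}^{\star}_{{\sf iso}}}$, so $\Lambda$ and $\Omega'$ implement an equivalence between $\sf{HTr}^{\star}$ and ${\sf wtr}\textnormal{-}{\sf RB}^{\star}_{{\sf iso}}$.

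Finally, by composing these two equivalences I would obtain the required equivalences between the three categories: the composite pair $(\Lambda\circ G, F\circ \Omega')$ realises an equivalence between ${\sf wt}\textnormal{-}{\sf Post}\textnormal{-}{\sf Hopf}^{\star}$ and ${\sf wtr}\textnormal{-}{\sf RB}^{\star}_{{\sf iso}}$, since the composition of equivalences of categories is again an equivalence. Because no genuine new construction is required, there is essentially no obstacle; the proof is a one-line consequence of Theorems \ref{th-iso-wtph-htr} and \ref{catequivHTr} together with the transitivity of categorical equivalence.
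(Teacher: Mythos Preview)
Your proposal is correct and follows exactly the same approach as the paper: the paper's proof is the single line ``The proof follows by Theorem~\ref{th-iso-wtph-htr} and previous theorem,'' invoking the isomorphism ${\sf wt}\textnormal{-}{\sf Post}\textnormal{-}{\sf Hopf}^{\star}\cong\sf{HTr}^{\star}$ and the equivalence $\sf{HTr}^{\star}\simeq{\sf wtr}\textnormal{-}{\sf RB}^{\star}_{{\sf iso}}$ just as you do.
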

\begin{proof}
The proof follows by Theorem \ref{th-iso-wtph-htr} and previous theorem.
\end{proof}

\begin{Corollary}
The categories ${\sf coc}\textnormal{-}{\sf HTr}$ and ${\sf coc}$-${\sf wtr}$-${\sf RB}_{{\sf iso}}$ are equivalent.
\end{Corollary}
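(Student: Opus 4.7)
The plan is to show that the equivalence of categories established in Theorem~\ref{catequivHTr} restricts to the cocommutative setting, yielding the desired equivalence between ${\sf coc}\textnormal{-}{\sf HTr}$ and ${\sf coc}\textnormal{-}{\sf wtr}\textnormal{-}{\sf RB}_{{\sf iso}}$.

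First I would verify two inclusions of subcategories. On the one hand, if $\mathbb{H}=(H_{1},H_{2},\sigma_{H})$ is a cocommutative Hopf truss, then $c_{H,H}\circ\delta_{H}=\delta_{H}$ and therefore condition \eqref{classcocomGH} holds trivially; hence ${\sf coc}\textnormal{-}{\sf HTr}$ is a full subcategory of ${\sf HTr}^{\star}$. On the other hand, if
\[
\Biggl(T\begin{array}{c}H\\\downarrow\\B\end{array},\varphi_{H},\Psi_{H}\Biggr)
\]
is an object in ${\sf coc}\textnormal{-}{\sf wtr}\textnormal{-}{\sf RB}_{{\sf iso}}$, then the cocommutativity of $H$ again gives $c_{H,H}\circ\delta_{H}=\delta_{H}$, so condition \eqref{classcocomfrakmH} holds automatically. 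Thus ${\sf coc}\textnormal{-}{\sf wtr}\textnormal{-}{\sf RB}_{{\sf iso}}$ is a full subcategory of ${\sf wtr}\textnormal{-}{\sf RB}^{\star}_{{\sf iso}}$.

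Next I would check that the functors $\Lambda$ and $\Omega'$ of Theorem~\ref{catequivHTr} restrict appropriately. Given $\mathbb{H}\in{\sf coc}\textnormal{-}{\sf HTr}$, the object $\Lambda(\mathbb{H})$ has underlying Hopf algebra $H_{1}$, whose coalgebra structure is the one shared with $H_{2}$ and is cocommutative by hypothesis, so $\Lambda$ maps ${\sf coc}\textnormal{-}{\sf HTr}$ into ${\sf coc}\textnormal{-}{\sf wtr}\textnormal{-}{\sf RB}_{{\sf iso}}$. Conversely, for an object in ${\sf coc}\textnormal{-}{\sf wtr}\textnormal{-}{\sf RB}_{{\sf iso}}$ the image under $\Omega'$ is the Hopf truss $\widetilde{\mathbb{H}}=(H,\widetilde{H},\Psi_{H})$, which shares the cocommutative coalgebra structure of $H$, so $\Omega'$ sends ${\sf coc}\textnormal{-}{\sf wtr}\textnormal{-}{\sf RB}_{{\sf iso}}$ into ${\sf coc}\textnormal{-}{\sf HTr}$.

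Finally, the identities and natural isomorphism exhibiting $\Lambda\dashv\Omega'$ as an equivalence in Theorem~\ref{catequivHTr} restrict without change: the equality $\Omega'\circ\Lambda={\sf id}_{{\sf HTr}^{\star}}$ restricts to $\Omega'\circ\Lambda={\sf id}_{{\sf coc}\textnormal{-}{\sf HTr}}$, while the natural isomorphism $\Lambda\circ\Omega'\simeq{\sf id}_{{\sf wtr}\textnormal{-}{\sf RB}^{\star}_{{\sf iso}}}$, whose component on a given object is the isomorphism $({\rm id}_{H},T)$, already lies inside ${\sf coc}\textnormal{-}{\sf wtr}\textnormal{-}{\sf RB}_{{\sf iso}}$ whenever $H$ is cocommutative. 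There is no real obstacle here; the only point worth verifying carefully is that cocommutativity of $H$ really does force both conditions \eqref{classcocomGH} and \eqref{classcocomfrakmH}, so that the two cocommutative subcategories sit inside the starred subcategories where $\Lambda$ and $\Omega'$ are defined.
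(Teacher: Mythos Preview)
Your proposal is correct and follows essentially the same approach as the paper: the paper's proof simply observes that under cocommutativity conditions \eqref{classcocomGH} and \eqref{classcocomfrakmH} hold automatically, and then implicitly appeals to the equivalence of Theorem~\ref{catequivHTr}. You have spelled out the additional details that the functors $\Lambda$ and $\Omega'$ restrict to the cocommutative subcategories and that the equivalence data restrict as well, but this is exactly the content the paper's one-line proof is gesturing at.
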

\begin{proof}
It is enough to note that, under cocommutativity assumption, \eqref{classcocomGH} and \eqref{classcocomfrakmH} always hold.
\end{proof}

\begin{Corollary}
The categories ${\sf coc}\textnormal{-}{\sf HTr}$, ${\sf coc}$-${\sf wtr}$-${\sf RB}_{{\sf iso}}$ and $\sf{coc}\textnormal{-}{\sf wt}\textnormal{-}{\sf Post}$-$\sf{Hopf}$ are equivalent.
\end{Corollary}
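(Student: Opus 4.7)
The plan is a direct transitivity argument: compose the two previously established equivalences that both have ${\sf coc}\textnormal{-}{\sf HTr}$ as their middle term. First I would appeal to Corollary~\ref{cor-iso-cocwtph-cochtr}, which provides a categorical \emph{isomorphism} (and hence, in particular, an equivalence) between $\sf{coc}\textnormal{-}{\sf wt}\textnormal{-}{\sf Post}\textnormal{-}{\sf Hopf}$ and ${\sf coc}\textnormal{-}{\sf HTr}$. Recall that this step is nothing more than the observation that cocommutativity forces conditions~\eqref{classcocommH} and \eqref{classcocomGH} to hold automatically, so that the isomorphism of Theorem~\ref{th-iso-wtph-htr} restricts to the cocommutative subcategories.

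Next I would invoke the immediately preceding corollary, which states that ${\sf coc}\textnormal{-}{\sf HTr}$ and ${\sf coc}\textnormal{-}{\sf wtr}\textnormal{-}{\sf RB}_{{\sf iso}}$ are equivalent. Again, this is a restriction of Theorem~\ref{catequivHTr} (where the functors $\Lambda$ and $\Omega'$ implement the equivalence) to the cocommutative setting, where condition~\eqref{classcocomfrakmH} is automatic and the Hopf-truss side automatically lies in ${\sf coc}\textnormal{-}{\sf HTr}\subseteq {\sf HTr}^{\star}$.

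Composing these two equivalences—using that equivalence of categories is transitive—yields the desired three-way equivalence of ${\sf coc}\textnormal{-}{\sf HTr}$, ${\sf coc}\textnormal{-}{\sf wtr}\textnormal{-}{\sf RB}_{{\sf iso}}$ and $\sf{coc}\textnormal{-}{\sf wt}\textnormal{-}{\sf Post}\textnormal{-}{\sf Hopf}$. There is no substantive obstacle here: all the real content has already been packaged into Corollary~\ref{cor-iso-cocwtph-cochtr} and the preceding corollary, so the proof reduces to a one-line citation of both results together with the remark that the composition of (iso)equivalences is an equivalence.
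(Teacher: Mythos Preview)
Your proposal is correct and follows exactly the same approach as the paper: the paper's proof is literally the one-line citation ``The proof follows by Corollary~\ref{cor-iso-cocwtph-cochtr} and previous corollary.'' Your extra commentary on why those corollaries hold under cocommutativity is accurate but not needed here, since it merely recapitulates the content of the cited results.
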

\begin{proof}
The proof follows by Corollary \ref{cor-iso-cocwtph-cochtr} and previous corollary.
\end{proof}

\subsection*{Acknowledgements}

The authors were supported by Ministerio de Ciencia e Innovaci\'on of Spain. Agencia Estatal de Investigaci\'on. Uni\'on Europea -- Fondo Europeo de Desarrollo Regional (FEDER). Grant PID2020-115155GB-I00: Homolog\'{\i}a, homotop\'{\i}a e invariantes categ\'oricos en grupos y \'algebras no asociativas. Moreover, Jos\'e Manuel Fern\'andez Vilaboa and Brais Ramos P\'erez were funded by Xunta de Galicia, grant ED431C 2023/31 (European FEDER support included, UE). Also, Brais Ramos P\'erez was financially supported by Xunta de Galicia Scholarship ED481A-2023-023.
The authors are grateful to the referees for their thoughtful comments, which contributed to the improvement of the manuscript.

\pdfbookmark[1]{References}{ref}
\LastPageEnding

\end{document}